\theoremstyle{plain}
\newtheorem{Thm}{Theorem}[section]
\theoremstyle{definition}
\newtheorem{Prop}[Thm]{Proposition}
\newtheorem{Lem}[Thm]{Lemma}
\newtheorem{Def}[Thm]{Definition}
\newtheorem{Eg}[Thm]{Example}
\theoremstyle{remark}
\newtheorem{Rem}[Thm]{Remark}
\newcommand{\R}{\mathbb{R}}
\newcommand{\C}{\mathbb{C}}
\newcommand{\N}{\mathbb{N}}
\newcommand{\re}{\mathop{\mathrm{Re}}}
\newcommand{\Dom}{\textsf{Dom}}
\newcommand{\Ran}{\textsf{Ran}}
\newcommand{\Nul}{\textsf{Nul}}
\newcommand{\sgn}{\mathop{\text{sgn}}}
\newcommand\ca{1}
\renewcommand{\div}{\mathop{\mathrm{div}}\nolimits}
\newcommand{\grad}{\nabla}
\newcommand{\sppt}{\mathop{\mathrm{sppt}}\nolimits}
\newcommand{\loc}{\mathrm{loc}}
\newcommand{\inj}{\mathop{\mathrm{inj}}}
\newcommand{\Ric}{\mathop{\mathrm{Ric}}}
\newcommand{\End}{\mathop{\mathrm{End}}}
\newcommand{\clos}[1]{\overline{#1}}
\newcommand{\bdy}{\partial}
\newcommand{\ta}{{\scriptscriptstyle \parallel}}
\newcommand{\no}{{\scriptscriptstyle\perp}}
\mathchardef\semic="303B
\newcommand{\sett}[2]{ \{ #1 \, \semic \, #2 \} }
\mathchardef\semic="303A
\newcommand{\mH}{{\mathcal H}}
\newcommand{\mX}{{\mathcal X}}
\newcommand{\mY}{{\mathcal Y}}
\newcommand{\mE}{{\mathcal E}}
\newcommand{\scl}[2]{\langle #1,#2 \rangle}
\newcommand{\supp}{\text{{\rm supp}}\,}
\newcommand{\dist}{\text{{\rm dist}}\,}
\newcommand{\dom}{\textsf{Dom}}
\newcommand{\ran}{\textsf{Ran}}
\newcommand{\nul}{\textsf{Nul}}
\newcommand{\barint}{\mbox{$ave \int$}}
\newcommand{\divv}{{\text{{\rm div}}}}
\newcommand{\esssup}{\text{{\rm ess sup}}}
\newcommand{\pd}{\partial}
\newcommand{\rev}[1]{\overline{#1}}
\newcommand{\mV}{\mathcal{V}}
\def\barint_#1{\mathchoice
            {\mathop{\vrule width 6pt
height 3 pt depth -2.5pt
                    \kern -8.8pt
\intop}\nolimits_{#1}}%
            {\mathop{\vrule width 5pt height
3 pt depth -2.6pt
                    \kern -6.5pt
\intop}\nolimits_{#1}}%
            {\mathop{\vrule width 5pt height
3 pt depth -2.6pt
                    \kern -6pt
\intop}\nolimits_{#1}}%
            {\mathop{\vrule width 5pt height
3 pt depth -2.6pt
          \kern -6pt \intop}\nolimits_{#1}}}
\numberwithin{equation}{section}
\begin{document}

\title[]{Quadratic estimates for degenerate elliptic systems on manifolds with lower Ricci curvature bounds and boundary value problems }
\author[Auscher]{Pascal Auscher}
\author[Morris]{Andrew J. Morris$\,^1$}
\author[Ros\'{e}n]{Andreas Ros\'{e}n}

\address{Pascal Auscher\\ Universit\'e Paris-Saclay, CNRS, Laboratoire de Math\'{e}matiques d'Orsay, 91405 Orsay, France}
\email{pascal.auscher@universite-paris-saclay.fr}
\address{Andrew J. Morris\\ School of Mathematics\\University of Birmingham\\Edgbaston\\Birmingham\\B15 2TT\\UK}
\email{a.morris.2@bham.ac.uk}
\address{Andreas Ros\'en\\Mathematical Sciences, Chalmers University of Technology and the University of Gothenburg\\
SE-412 96 G{\"o}teborg, Sweden}
\email{andreas.rosen@chalmers.se}

\date{\today}
\subjclass[2020]{58J32 (primary), 35J57, 35J70, 47B12 (secondary)}
\keywords{Dirichlet and Neumann problems, Riemannian manifolds, bounded geometry, Muckenhoupt weights, square function, non-tangential maximal function, functional and operational calculus, Fredholm theory}
\thanks{$\,^1$This work was supported by the Engineering and Physical Sciences Research Council [grant number EP/J010723/1]. No data were created or analysed in this study.}

\begin{abstract}
Weighted quadratic estimates are proved for certain bisectorial 
first-order differential operators with bounded measurable coefficients
which are (not necessarily pointwise) accretive,
on complete manifolds with positive injectivity radius.   
As compared to earlier results, Ricci curvature is only assumed 
to be bounded from below, and the weight is only assumed
to be locally in $A_2$.
The Kato square root estimate is proved under this weaker 
assumption. 
On compact Lipschitz manifolds we prove solvability estimates for solutions to 
degenerate elliptic systems with not necessarily self-adjoint coefficients, 
and with Dirichlet, Neumann and Atiyah--Patodi--Singer boundary conditions.
\end{abstract}

\maketitle

\tableofcontents

\section{Introduction}

This paper concerns the weighted $L^2$ theory for generalized singular
integrals appearing in the $H^\infty$ functional calculus
of differential operators with non-smooth coefficients. 
The seminal work in this area is the  
Kato square root estimate by Auscher, Hofmann, Lacey, McIntosh and Tchamitchian~\cite{AHLMcT:02}, which  yields estimates
\begin{equation}   \label{eq:KatoRiesz}
   \|\nabla(-\divv A\nabla)^{-1/2}u\|_2\eqsim \|u\|_2
\end{equation}
of Riesz transforms 
associated with divergence form operators $\divv A \nabla$ on $\R^n$.
The coefficient matrix $A$ is assumed to be merely bounded, 
measurable and accretive,
complex matrix-valued.
Although the square root in \eqref{eq:KatoRiesz} is in the  holomorphic  functional calculus of the sectorial operator $-\divv A\nabla$, the left factor $\nabla$ is not.
The Kato estimate was subsequently put into its natural functional analytic framework, and boundedness of the $H^\infty$ functional calculus for 
certain bisectorial Dirac type operators was shown.
First, generalizations $\Gamma+B^{-1}\Gamma^* B$ of the Hodge--Dirac operator were
considered in \cite{AKMc}, and later perturbations $DB$ of self-adjoint differential
operators $D$ by accretive multipliers $B$ were considered
in \cite{AAMc2, AAMc1} and in later works on boundary value problems
discussed below.
The $DB$ operators have a simpler structure than Hodge--Dirac operators, 
but still the bounds for $\Gamma+B^{-1}\Gamma^* B$ operators follow
from the bounds for DB operators, using Hodge splittings as shown in \cite[Sec.~10.1]{AAMc1}. Also the Kato square root estimate
\eqref{eq:KatoRiesz} is a well-known corollary,  see \cite[Sec.~2.1]{AAMc1}. 
The main ideas in all the proofs are the following.
(1) The $H^\infty$ functional calculus bounds are reduced to proving quadratic estimates, which provide a Littlewood--Paley decomposition of $L^2$
adapted to the operator. (2) A local $Tb$ theorem allows
a further reduction of the problem to showing Carleson bounds for a certain
multiplier. (3) This Carleson estimate is established using a stopping
time argument. 

The $L^2$ estimates for the above operators are quite sharp in the sense that 
$L^p$ estimates in general only hold in a small interval around $p=2$ 
for the above operators, and not for all $1<p<\infty$ as is the case for 
classical Calder\'on--Zygmund operators.
See \cite{Auscher:07, AuscherStahlhut:16, FreyMcIntoshPortal:18}. 
For $L^2$ spaces, the bounds for the $H^\infty$ functional calculus of the 
above Dirac type operators have been extended (1) to
certain complete manifolds $M$ in \cite{MM,BM}, 
with earlier results for compact manifolds in \cite{AKMc}, 
and (2) to weighted $L^2(\R^n,w)$ spaces in \cite{ARR}. 
The main issue on manifolds, which are assumed to be complete but are
allowed to 
be noncompact, is the geometry of the manifold towards infinity.
The main issue in weighted spaces is local and the weight is assumed
to be in the Muckenhoupt $A_2$ class. The bounds of the 
$H^\infty$ functional calculus here require the double stopping time argument
from \cite[Sec.~3.5-7]{ARR}.

  A first main result in this paper is the following theorem,
 which proves quadratic estimates for operators $DB$ under weaker geometric assumptions than those in \cite{AKMc, MM, BM}, and which combines those results with the weighted theory developed in \cite{ARR}.  
See Section~\ref{sec:2} for precise definitions.
 We adopt the convention for estimating $x,y\geq0$ whereby $x\lesssim y$ means that there exists a constant $C< \infty$, which only depends on constants specified in the relevant preceding hypotheses, such that $x\leq Cy$. We write $x\eqsim y$ when $x\lesssim y \lesssim x$. 

\begin{Thm}    \label{thm:main}
Let $(M,g)$ denote a complete  Riemannian manifold with injectivity radius $\inj(M,g)\geq r_E$ and Ricci curvature $\Ric(M,g) \geq -K g$ in the sense of bilinear forms,
where $r_E>0$ and $K<\infty$. 
Let $\omega \in A_2^R(M)$ be a local Muckenhoupt  weight
as in Section~\ref{sec:weightsops}, for some $R>0$.
  Consider perturbations $DB$ as in Section~\ref{sec:weightsops},
of the self-adjoint operator $D$ in $L^2(\mathcal{V},\omega)$ as given by \eqref{eq:Vdefn}-\eqref{eq:Ddefn}, by bounded coefficients $B$ as in \eqref{eq:Bsplit}, which are accretive in
the sense of \eqref{eq:acc}.  
Then we have quadratic estimates
\begin{equation}\label{eq:mainest}
\int_0^\infty \|tDB(I+t^2(DB)^2)^{-1}u\|_{L^2(\mathcal{V},\omega)}^2 \frac{dt}{t} \lesssim \|u\|_{L^2(\mathcal{V},\omega)}^2, \qquad u \in L^2(\mathcal{V},\omega),
\end{equation}
where the implicit constant depends only on $n=\dim(M)$, $r_E$, $K$, 
$\|B\|_\infty$, $\kappa_B$, $R$ and 
$[\omega]_{A_2^R(M)}$.
\end{Thm}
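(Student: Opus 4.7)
The plan is to follow the by-now standard $Tb$ strategy for quadratic estimates of $DB$-type operators developed in \cite{AHLMcT:02, AKMc, AAMc1, AAMc2, MM, BM, ARR}, with the novelty being that we simultaneously relax the geometric hypothesis (only a lower Ricci bound, and no compactness) and the weight hypothesis (only $A_2^R$ locally). By McIntosh's theorem it suffices to establish \eqref{eq:mainest} for $u$ in the closure of $\ran(D)$, and via the resolvent identity one standardly reduces matters to the bounded operator family
\[
\Theta_t^B := tDB(I+itDB)^{-1}\mathbb{P}_{\overline{\ran(D)}}, \qquad t>0,
\]
for which off-diagonal decay and the uniform bound $\|\Theta_t^B\|_{L^2(\mV,\omega)\to L^2(\mV,\omega)}\lesssim1$ follow, as in \cite{AAMc1,ARR}, from Caccioppoli-type inequalities together with the local doubling of $\omega$ at scales $\le R$ and the local volume doubling of $(M,g)$ at scales $\le r_E$ (which is available from the Bishop--Gromov inequality under $\Ric\ge -Kg$). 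One then splits $\Theta_t^B u = (\Theta_t^B u - \gamma_t\, A_t u) + \gamma_t(A_tu - u) + \gamma_t u$, where $A_t$ is a dyadic averaging operator along a Christ system of cubes on $M$ of diameter $\lesssim t$ (constructed at scales $\lesssim\min(r_E,R)$ from the quasi-Euclidean normal coordinate charts), and $\gamma_t(x):= \Theta_t^B(\mathbf{1})(x)$ is a matrix-valued multiplier.

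The first two terms are controlled, by off-diagonal decay and a quasi-orthogonality argument essentially as in \cite[\S5]{AAMc1} and \cite[\S3]{ARR}, once one has the correct Poincar\'e inequality on the weighted Christ cubes at scales $\le \min(r_E,R)$; I would obtain this by transferring the Euclidean weighted Poincar\'e inequality for $A_2^R$-weights through normal charts, using the $C^0$-control of the metric that the lower Ricci bound yields via the Bishop comparison theorem at small scale. The remaining piece reduces \eqref{eq:mainest} to the local Carleson measure estimate
\[
\int_0^{\ell(Q)}\!\!\int_{Q} |\gamma_t(x)|^2 \,\frac{d\omega(x)\,dt}{t} \;\lesssim\; \omega(Q)
\]
uniformly over Christ cubes $Q$ of side $\ell(Q)\le \min(r_E,R)$.

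For this Carleson estimate the plan is to apply the local $Tb$ theorem, in the weighted form from \cite[Sec.~3.5--3.7]{ARR}. Namely, for every cube $Q$ one constructs a test function $f_Q^B$ of the form $B^{-1}\mathbf{e}$ smoothed to scale $\ell(Q)$ against a suitable bump, where $\mathbf{e}$ runs over a finite family of constant sections adapted to the finite-dimensional accretivity directions of $B$ in \eqref{eq:acc}. The accretivity of $B$ then gives the lower bound $|\ave{B f_Q^B}_Q|\gtrsim1$, while the boundedness of $B$ and the local doubling of $\omega$ give uniform $L^2(\mV,\omega)$ bounds on $f_Q^B$ and on the truncated resolvent applied to it. The double stopping time of \cite{ARR}, simultaneously in the weight oscillation (relative to $[\omega]_{A_2^R}$), the average of $|f_Q^B|$, and the accretivity threshold, converts the pointwise pairing of $\gamma_t$ with $f_Q^B$ into a packing estimate by a Carleson measure bounded in terms of $\omega(Q)$.

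The hardest step I expect is making all of this uniform over scales on a noncompact manifold when only a lower Ricci bound is available: ordinary (global) doubling fails, and one must cover $M$ by normal-coordinate patches of radius $\simeq \min(r_E,R)$ in which local weighted harmonic analysis is transplanted from the Euclidean theory of \cite{ARR}, while patching the resulting Carleson estimates via the off-diagonal decay of $\Theta_t^B$ and the dyadic structure of Christ cubes of diameter $\le \min(r_E,R)$. For scales $t\ge \min(r_E,R)$ the contribution to \eqref{eq:mainest} is handled crudely by the uniform bound $\|\Theta_t^B\|\lesssim 1$ together with $\|\Theta_t^B u\|_{L^2(\mV,\omega)}\lesssim t^{-1}\|u\|_{L^2(\mV,\omega)}$ coming from the resolvent bound applied to $u\in\overline{\ran(D)}$, yielding integrability at infinity in $t$ and closing the argument.
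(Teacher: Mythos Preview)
Your strategy differs substantially from the paper's. You propose to run the full weighted $Tb$/Carleson argument \emph{intrinsically} on $M$, using Christ cubes, a principal part $\gamma_t$, and the double stopping time of \cite{ARR} transplanted through coordinate charts. The paper instead \emph{avoids} redoing any of this on the manifold: it localizes via a covering by coordinate balls, uses off-diagonal decay (your $I_2$-type term) to reduce to a single chart, \emph{pulls back} the operator $DB$ to a Euclidean operator $D_\rho B_\rho$ on a ball in $\R^n$, extends the weight to an $A_2(\R^n)$ weight and the coefficients to accretive coefficients on all of $\R^n$ (adding a lower-order term $r\mathbb{P}_0$ to preserve accretivity on $\ran(D_\rho)$, since $B$ is only accretive on $\ran(D)$ and not pointwise), and then simply invokes the Euclidean quadratic estimate already proved in \cite{ARR}. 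What the paper's route buys is that the hard harmonic analysis (weighted Carleson estimate, double stopping time) is not redone; what your route would buy, if it worked, is a more self-contained manifold argument in the spirit of \cite{MM,BM}.

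There is, however, a concrete gap in your geometric input. You write that you obtain ``the $C^0$-control of the metric that the lower Ricci bound yields via the Bishop comparison theorem'' in normal coordinates. Bishop--Gromov gives volume comparison and hence local doubling, but it does \emph{not} give uniform two-sided bounds $C^{-1}\delta\le g_{ij}\le C\delta$ on the metric tensor in geodesic normal coordinates; such bounds in normal coordinates typically require control on the full Riemann tensor via Jacobi field estimates. Under only $\Ric\ge -Kg$ and $\inj\ge r_E$, the correct tool is the Anderson--Cheeger theorem \cite{AC}, which produces \emph{harmonic} coordinate charts in which \eqref{eq:hr0} holds. This is precisely the observation the paper uses to weaken the two-sided Ricci hypothesis of \cite{BM} to a lower bound, and without it your transplantation of weighted Poincar\'e inequalities and the uniformity of your local $Tb$ constants over charts are not justified. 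A second, smaller gap: your test functions ``$B^{-1}\mathbf e$'' are not well defined, since $B$ is only assumed accretive on $\ran(D)$, not pointwise invertible; the paper handles the analogous issue in its extension step by adding the term $r\mathbb{P}_0$, and your intrinsic argument would need a comparable device when constructing $f_Q^B$.
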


 There are several novelties in Theorem~\ref{thm:main}, and its proof, as compared 
to earlier results. The best previous result \cite[Thm.~1.1]{BM}, for complete manifolds 
in the unweighted setting $\omega=1$, required an additional upper bound on the Ricci curvature.
We avoid this by using a result of Anderson and Cheeger~\cite{AC} to observe that the generalised bounded geometry 
property introduced by Bandara and McIntosh~\cite[Defn.~2.5]{BM}, see \eqref{eq:hr0} below and the related discussion, holds on the tangent bundle $TM$ 
whenever $M$ has a positive injectivity radius and only a lower bound on its Ricci curvature. 

Our proof only uses the injectivity radius and Ricci curvature hypotheses to obtain coordinates which satisfy \eqref{eq:hr0}, 
so Theorem~\ref{thm:main} actually holds more generally under the assumption that $TM$ has such generalised bounded geometry.  
It is this property which allows us to localize the quadratic estimate to coordinate charts.
For general local $A_2$ weights $\omega$, we pull back the 
estimates to $\R^n$ in order to apply the Euclidean 
weighted quadratic estimate from \cite{ARR}.
In this regard, our approach is more aligned with the earlier treatment for compact manifolds in \cite[Sec.~7]{AKMc} than with those for complete manfiolds in \cite{BM,MM}, where estimates are established directly on the manifold.

In order to apply the weighted quadratic estimates from \cite{ARR}, we need to extend the weight $\omega$, as well as the coefficients $B$, from within a euclidean ball to all
of $\R^n$, whilst preserving the $A_2$ property for $\omega$ and the accretivity for $B$. This technical issue is resolved in Section~\ref{sec:extension}.
For our estimates to apply to general elliptic systems,
we do not assume pointwise accretivity of the coefficients, but only
a G\aa rding inequality, which is what makes the extension problem
for the coefficients non-trivial. 

Theorem~\ref{thm:main} yields in particular the following 
extension of 
the Kato square root estimate, which we prove at the end of Section~\ref{sect:loc}.

\begin{Thm}  \label{thm:Kato}
Let $(M,g)$ denote a complete Riemannian manifold with $\inj(M,g)\ge r_E$ and 
$\Ric(M,g) \geq -K g$ in the sense of bilinear forms, for some $r_E>0$ and $K<\infty$. Let $\omega \in A_2^R(M)$, for some $R>0$.
Let
$$
  J(u,v)= \int_M \Big( \scl{A\nabla_M u}{\nabla_M v}+ u \scl{b}{ \nabla_M v}+ \scl{c}{ \nabla_M \overline u}\overline v+ du\overline v \Big) \omega(x) d\mu(x)
$$
be a sesqui-linear form
on $L^2(M,\omega)$, 
with domain $W^{1,2}(M,\omega)$  and with $A\in L^\infty(\End(T M))$, $b,c\in L^\infty(T M)$ and
$d\in L^\infty(M)$, which satisfies
 a G\aa rding inequality  
$$
  \re J(u,u)\gtrsim \|\nabla_M u\|_{L^2(M,\omega)}^2+ \|u\|_{L^2(M,\omega)}^2,
$$
for all $u\in W^{1,2}(M,\omega)$.
Consider the associated operator 
$$
L= -\divv_{M,\omega} A\nabla_M-\divv_{M,\omega} b+ c\nabla_M+d
$$ 
and a function 
$a\in L^\infty(M)$ with $\inf_M \re a>0$.
Then the weighted Kato square root estimate
$$
  \| \sqrt{aL}u \|_{L^2(M,\omega)}\eqsim \|\nabla_M u\|_{L^2(M,\omega)}+ \|u\|_{L^2(M,\omega)}, \qquad u\in W^{1,2}(M,\omega),
$$
holds on $M$.
\end{Thm}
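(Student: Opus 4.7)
The plan is to realize $\sqrt{aL}$ via the $H^\infty$ functional calculus of a Dirac-type first-order operator $DB$, following the now-standard reformulation of \cite{AKMc, AAMc1} adapted to the weighted manifold setting, and then invoke Theorem~\ref{thm:main} to obtain the required quadratic estimate.

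First, I introduce the auxiliary bundle $\mathcal{V} = \underline{\mathbb{C}}\oplus\underline{\mathbb{C}}\oplus TM$ and consider, on $L^2(\mathcal{V},\omega)$, the self-adjoint block operator
$$ D = \begin{pmatrix} 0 & 1 & -\divv_{M,\omega}\\ 1 & 0 & 0\\ \nabla_M & 0 & 0 \end{pmatrix} $$
together with the bounded coefficients
$$ B = \begin{pmatrix} a & 0 & 0\\ 0 & d & c\\ 0 & b & A \end{pmatrix}\in L^\infty(\End\,\mathcal{V}). $$
Since $-\divv_{M,\omega}$ is the $L^2(\cdot,\omega)$-adjoint of $\nabla_M$, the operator $D$ is closed, densely defined and self-adjoint, and fits the template \eqref{eq:Ddefn} required in Section~\ref{sec:weightsops}. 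A direct computation identifies
$$ \overline{\ran D} = \{(y_1, y_2, \nabla_M y_2) : y_1\in L^2(M,\omega),\ y_2\in W^{1,2}(M,\omega)\}, $$
and evaluating $(DB)^2$ on the scalar slot $(u,0,0)$ reproduces (after the standard rescaling $\omega \leftrightarrow a^{-1}\omega$, which preserves the $A_2^R$ condition with comparable constant since $a\eqsim 1$) the form-associated operator $aL$ acting on $W^{1,2}(M,\omega)$, as in \cite[Section~3]{AAMc1}.

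Second, I verify the hypotheses of Theorem~\ref{thm:main} for this choice of $(D,B)$. The bound $\|B\|_\infty<\infty$ is immediate from $a,b,c,d,A\in L^\infty$. For the accretivity \eqref{eq:acc}, I evaluate on a general $U=(y_1,y_2,\nabla_M y_2)\in\overline{\ran D}$:
$$ \re\scl{BU}{U}_{L^2(\mathcal{V},\omega)} = \int_M (\re a)|y_1|^2\,\omega\,d\mu + \re J(y_2,y_2) \gtrsim \|y_1\|_{L^2(M,\omega)}^2 + \|\nabla_M y_2\|_{L^2(M,\omega)}^2 + \|y_2\|_{L^2(M,\omega)}^2, $$
where the first term is controlled by $\inf_M\re a>0$ and the second by the assumed Gårding inequality on $J$. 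The right-hand side equals $\|U\|_{L^2(\mathcal{V},\omega)}^2$, which yields \eqref{eq:acc}.

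Third, Theorem~\ref{thm:main} provides the quadratic estimate \eqref{eq:mainest} for $DB$, which by McIntosh's theorem upgrades to a bounded $H^\infty$ functional calculus on $\overline{\ran(DB)}$, and in particular yields the equivalence
$$ \|DB\,U\|_{L^2(\mathcal{V},\omega)} \eqsim \|\sqrt{(DB)^2}\,U\|_{L^2(\mathcal{V},\omega)} $$
on the relevant spectral subspace. Specializing to $U=(u,0,0)$ with $u\in W^{1,2}(M,\omega)$, one computes $DB\,U=(0,au,a\nabla_M u)$, so the left-hand side is $\eqsim \|u\|_{L^2(M,\omega)}+\|\nabla_M u\|_{L^2(M,\omega)}$ using $a\eqsim 1$, while the identification from the first step identifies the right-hand side with $\|\sqrt{aL}\,u\|_{L^2(M,\omega)}$. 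This is exactly the asserted Kato estimate.

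The main technical obstacle is not the analysis (which is delivered by Theorem~\ref{thm:main}) but the algebraic identification in the first step: at the raw operator level $(DB)^2|_{\text{scalars}}$ produces a second-order operator whose form is $J$ weighted by $a\omega\,d\mu$ rather than $\omega\,d\mu$, differing from literal composition $aL$ by a first-order term involving the distributional derivative of $a\in L^\infty$. This is reconciled as in \cite{AAMc1} by working with the equivalent weight $a^{-1}\omega$, which remains locally $A_2$ with comparable constant, so that both operators share the same bisectorial $H^\infty$ calculus and hence the same square root up to the equivalence of weighted norms $\|\cdot\|_{L^2(M,\omega)}\eqsim\|\cdot\|_{L^2(M,a^{-1}\omega)}$.
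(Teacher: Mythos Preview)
Your overall strategy---the same block $D$ and block-diagonal $B$, accretivity via the G\aa rding inequality, and an appeal to Theorem~\ref{thm:main}---matches the paper's. However, there is a genuine error in the execution: you work with $DB$ where you need $BD$.

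Concretely, for $U=(u,0,0)$ you have $BU=(au,0,0)$ and hence $DBU=(0,au,\nabla_M(au))$, \emph{not} $(0,au,a\nabla_M u)$ as you wrote; since $a$ is merely in $L^\infty$, $au$ need not lie in $W^{1,2}(M,\omega)$ and the computation breaks down at the domain level. Moreover, in the $2\times 2$ block form one has
\[
(DB)^2=\begin{bmatrix} S^*\tilde A\,S a & 0\\ 0 & S a\,S^*\tilde A\end{bmatrix},
\]
so $(DB)^2$ restricted to the scalar slot is $u\mapsto L(au)$, not $aL$. Your attempted repair via the ``rescaled weight $a^{-1}\omega$'' is not available: $a$ is complex-valued, so $a^{-1}\omega$ is not a weight, and the discussion in \cite{AAMc1} does not proceed that way.

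The paper's fix is simply to pass from $DB$ to $BD$. The quadratic estimates for $DB$ from Theorem~\ref{thm:main} yield bounded $H^\infty$ calculus for $BD$ as well (via the standard intertwining/duality between $DB$ and $BD$), giving
\[
\|\sqrt{(BD)^2}f\|\eqsim\|BDf\|\eqsim\|Df\|,
\]
the second equivalence because $B$ is bounded and accretive on $\ran(D)$. Now
\[
(BD)^2=\begin{bmatrix} a\,S^*\tilde A\,S & 0\\ 0 & \tilde A\,S\,a\,S^*\end{bmatrix}
=\begin{bmatrix} aL & 0\\ 0 & *\end{bmatrix},
\]
so for $f=[u,0]^T$ one obtains $\|\sqrt{aL}\,u\|\eqsim\|Df\|=\|Su\|\eqsim\|u\|+\|\nabla_M u\|$ directly, with no rescaling needed. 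Replacing $DB$ by $BD$ in your third step (and deleting the last paragraph) makes the argument correct and identical to the paper's.
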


The novelty in Theorem~\ref{thm:Kato} is that only a lower bound
on the 
Ricci curvature of the manifold is assumed and the 
weight is only assumed to be a local $A_2$ weight.
The proof involves applying
Theorem~\ref{thm:main} to suitable coefficients $B$ depending on 
$A, a, b, c, d$.

An early application of the boundedness of the $H^\infty$
functional calculus of $DB$ operators was to 
elliptic non-smooth boundary value
problems (BVPs) \cite{AAH, AAMc2, AA1, AR2}.
As in the theory of smooth Dirac BVPs, see for example
\cite{BossBavenbeck}, the key idea is to write the derivative transversal 
to the boundary in the partial differential equation (PDE) in terms of an operator acting along the
boundary. For non-smooth divergence form elliptic equations
$\divv A\nabla u=0$, it was found in
\cite{AAMc2}
that the structure of this boundary operator is precisely $DB$.
This uses a non-linear transform $A\mapsto B$ of accretive coefficients,
featuring a Schur complement of the transversal block in $A$, 
see \eqref{eq:defntranformedB}. 
Theorem~\ref{thm:main} applies to prove
boundedness of projections onto Hardy type subspaces in particular,
which is crucial for BVPs.

A second main result in this paper is the following theorem,
which uses Theorem~\ref{thm:main} to estimate the Neumann and Dirichlet
boundary data, as a whole, for solutions to divergence form elliptic equations.

\begin{Thm}   \label{thm:ND} 
Let $\Omega$ be a compact manifold with Lipschitz boundary 
$\bdy\Omega$,  in the sense that
we have a bilipschitz parametrization $\rho(t,x)$  of a neighbourhood, 
relative $\Omega$,  of $\partial \Omega$
by a cylinder $[0,\delta)\times M$ as in \eqref{eq:paramcyl}, with 
$M$ being a closed Riemannian manifold.
Let $\omega\in A_2(\Omega)$ be a Muckenhoupt weight such that 
$\omega_\rho= \omega\circ\rho$ is independent of
$t$, and let $\omega_0= \omega_\rho|_M$.
Consider weak solutions $u$ to a divergence form elliptic system
$\divv A\nabla u=0$ in $\Omega$, with degenerate elliptic coefficients
$A$ satisfying $A/\omega\in L^\infty(\End(T\Omega))$ and the
accretivity conditions \eqref{eq:intaccr}, \eqref{eq:bdyaccr}.
Assume that a boundary trace $A_0=\lim_{t\to 0} A_\rho$
exists for the pulled back coefficients $A_\rho$  
on the cylinder, in the 
sense that we have a finite Carleson discrepancy    
$\|A_\rho/\omega_\rho-A_0/\omega_0\|_*<\infty$. See \eqref{eq:modifiedCarl}.
Consider the conormal derivative $\pd_{\nu_{A_\rho}}u_\rho$
and the tangential gradient $\nabla_M u_\rho$ of $u_\rho= u\circ \rho$
 on the cylinder. See 
\eqref{eq:conormalgrad}.

{\rm (i)} 
If $\|\nabla u\|_\mX<\infty$, 
with the modified non-tangential maximal function norm 
$\|\cdot\|_\mX$ given by \eqref{eq:Xnormdef}, then  
\begin{equation}   \label{eq:Xest0}
   \|\pd_{\nu_{A_\rho}}u_\rho|_M\|_{L^2(M,\omega_0^{-1})}
   + \|\nabla_M u_\rho|_M\|_{L^2(TM,\omega_0)}
   \lesssim \|\nabla u\|_\mX,
\end{equation}
where the traces exist in the
$L^2$ average sense \eqref{eq:Diniconv}.

{\rm (ii)} 
If $\|\nabla u\|_\mY<\infty$, with the square function norm $\|\cdot\|_\mY$ given by
\eqref{eq:Ynormdef}, then  
\begin{equation}   \label{eq:Yest0}
   \|\pd_{\nu_{A_\rho}}u_\rho|_M\|_{W^{-1,2}(M,\omega_0^{-1})}
   + \|\nabla_M u_\rho|_M\|_{W^{-1,2}(TM,\omega_0)}
   \lesssim \|\nabla u\|_\mY,
\end{equation}
where the traces exist in the
$W^{-1,2}$ sense \eqref{eq:Wminoneconv}.

Moreover, if $\|A_\rho/\omega_\rho-A_0/\omega_0\|_*$ is small enough, 
with smallness
depending on $[\omega_0]_{A_2(M)}$, $\|A_0/\omega_0\|_\infty$ and the
constant in \eqref{eq:bdyaccr}, then
the estimates $\gtrsim$ also hold in \eqref{eq:Xest0} and \eqref{eq:Yest0},
with the implicit constant also depending on the constant in 
\eqref{eq:intaccr}.
\end{Thm}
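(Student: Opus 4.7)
The plan is to reduce the boundary value problem on $\Omega$ to a first-order evolution along the transversal direction of the collar $[0,\delta)\times M$, and then exploit the $H^\infty$ functional calculus granted by Theorem~\ref{thm:main}. On the cylinder I would block-decompose the bounded accretive pulled-back coefficients $A_\rho/\omega_\rho$ with respect to the transversal/tangential splitting of $T(\R\times M)$ and apply the Schur complement transformation \eqref{eq:defntranformedB} to obtain coefficients $B_\rho(t)$ for which the conormal gradient $F_\rho(t)=(\partial_{\nu_{A_\rho}}u_\rho(t,\cdot),\,\nabla_M u_\rho(t,\cdot))$ satisfies the first-order system $\partial_t F_\rho+DB_\rho F_\rho=0$, where $D$ is the self-adjoint Dirac-type operator on $M$ from \eqref{eq:Ddefn} acting on the bundle $\mathcal{V}$, and $B_0$ is the Schur complement transform of $A_0/\omega_0$. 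Since $M$ is closed, the injectivity radius and Ricci curvature hypotheses of Theorem~\ref{thm:main} hold trivially, and $\omega_0\in A_2(M)\subset A_2^R(M)$ for any $R>0$, so $DB_0$ admits a bounded $H^\infty$ functional calculus on $L^2(\mathcal{V},\omega_0)$. This yields the Hardy-type splitting $L^2(\mathcal{V},\omega_0)=E_0^+\oplus E_0^-$ into the positive and negative spectral subspaces of $DB_0$, together with bounded Poisson-like extensions $e^{-t|DB_0|}$ on each summand; the boundary accretivity \eqref{eq:bdyaccr} is what guarantees the bisectoriality of $DB_0$ needed for this splitting.

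For parts (i) and (ii), I would express $F_\rho(t)$ as the $DB_0$-Cauchy extension of its boundary trace plus a perturbative term driven by $(B_\rho-B_0)F_\rho$. The hypothesis of finite Carleson discrepancy $\|A_\rho/\omega_\rho-A_0/\omega_0\|_*<\infty$ translates via the Schur complement into a Carleson measure control on $B_\rho-B_0$, so the perturbative term is bounded by $\|\nabla u\|_\mX$ in (i) and $\|\nabla u\|_\mY$ in (ii). Combining with the boundedness of the $DB_0$-Cauchy extension from $E_0^+L^2(\mathcal{V},\omega_0)$ into $\mX$, respectively from the $W^{-1,2}$-trace space into $\mY$, both consequences of the quadratic estimate \eqref{eq:mainest} applied to $DB_0$, one obtains the required control on $F_\rho(0^+)$ in $L^2(\mathcal{V},\omega_0)$ and $W^{-1,2}(\mathcal{V},\omega_0)$ respectively; the trace convergence in the senses of \eqref{eq:Diniconv} and \eqref{eq:Wminoneconv} then follows by standard Dini-type arguments on the semigroup $e^{-t|DB_0|}$, yielding \eqref{eq:Xest0} and \eqref{eq:Yest0}.

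For the moreover clause, when $\|A_\rho/\omega_\rho-A_0/\omega_0\|_*$ is sufficiently small, the linear map sending $F_\rho(0^+)$ to $\nabla u_\rho$ on the cylinder is a small perturbation of its $B_0$-counterpart, which by the functional calculus of $DB_0$ is a topological isomorphism in both $L^2\to\mX$ and $W^{-1,2}\to\mY$ topologies; a Neumann series inversion then provides the reverse $\gtrsim$ estimates, with the integral accretivity \eqref{eq:intaccr} ensuring that the energy solutions of the original elliptic system are faithfully recovered. The main obstacle I expect is making this Carleson perturbation argument rigorous in the present weighted degenerate manifold setting, in particular establishing the non-tangential and square function bounds for the $DB_0$-Cauchy extension on $L^2(\mathcal{V},\omega_0)$ together with the off-diagonal decay of the associated semigroup; this requires adapting the Euclidean techniques of \cite{AAMc2,AA1,AR2} to the weighted manifold framework supplied by Theorem~\ref{thm:main}.
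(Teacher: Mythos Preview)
Your strategy for parts (i) and (ii)---rewriting the equation as $\partial_t f + D_0 B f = 0$ for the conormal gradient, invoking Theorem~\ref{thm:main} to obtain functional calculus for the boundary operator, and representing $f_t$ via a Duhamel formula with a Carleson-controlled perturbation---is precisely the paper's route, culminating in the integral equation \eqref{eq:postintegration}. Two technical slips: the boundary operator is the $2\times 2$ block $D_0=\begin{bmatrix} 0 & \div_{M,\omega_0} \\ -\nabla_M & 0\end{bmatrix}$, not the $3\times 3$ operator $D$ of \eqref{eq:Ddefn}, and the paper passes from one to the other via the bounded-perturbation Lemma~\ref{lem:bddpert} in Proposition~\ref{prop:QEforbdyop}; also, you omit the null-space projection $E_0^0$, which is finite-rank but in general nonzero (Lemma~\ref{lem:e0}) and must be carried in the splitting of $f_0$.

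The genuine gap is in the ``moreover'' clause. Your Neumann series argument would suffice on an infinite half-cylinder, but $\Omega$ is compact with boundary. The integral equation \eqref{eq:postintegration} on the finite collar contains, in addition to the Carleson term $S(\mE_t \eta_t f_t)$ that is indeed small when $\|\mE\|_*$ is small, a term $\widetilde S(\eta_t' f_t)$ coming from the cutoff at depth $t\in[\delta/2,\delta]$; this term is \emph{not} small and cannot be absorbed into a Neumann series. Moreover, the norm $\|\nabla u\|_\mX$ includes an interior contribution $\|(1-\eta)\nabla u\|_{L^2(\Omega,\omega)}$ that is invisible to the $D_0B_0$ calculus on $M$. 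The paper deals with both via a Fredholm argument rather than an inversion: $\widetilde S\eta'$ is shown to be compact (using compactness of $D_0^{-1}$ on $\mH$, Proposition~\ref{prop:cptresolvents}), the interior piece is controlled by Caccioppoli plus weighted Poincar\'e, injectivity of $\nabla u\mapsto (E_0^++E_0^0)f_0$ is proved by the Green identity \eqref{eq:Green} combined with a Cauchy extension to $t<0$, and the reverse estimate then follows from the Open Mapping Theorem. Your Neumann series applies only to the $I-S\mE$ factor inside this argument; the map $f_0\mapsto \nabla u$ is not a small perturbation of an isomorphism in any obvious sense once the domain is bounded.
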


We remark that the assumed smoothness of the manifold $M$ 
is not used quantitatively, and the manifold $\Omega$ may be merely a Lipschitz manifold. 
Note also that the small Carleson condition on 
$A_\rho/\omega_\rho-A_0/\omega_0$ required for the reverse 
estimates $\gtrsim$, in general is only satisfied for pull backs
of smooth coeficients $A$ if the Lipschitz character of $\bdy\Omega$
is small, unless this boundary is locally the graph of a Lipschitz
function. 
The above Neumann and Dirichlet data for $u_\rho$ correspond 
to those for $u$ on $\bdy \Omega$ as in \eqref{eq:transfofCdata}.
We nevertheless use the above formulation since pullback by $\rho$
does not preserve the Sobolev space $W^{-1,2}$.

Theorem~\ref{thm:ND} is essentially a solvability result for the Atiyah--Patodi--Singer (APS) boundary value problem associated with the PDE $\div A\nabla u=0$. The connection to the classical APS conditions is that we rewrite the second-order equation as a first-order system \eqref{eq:firstCR} for the (conormal) gradient of $u$. In the smooth setting, this first-order system would coincide with the Hodge--Dirac operator acting on gradient vector fields, and our APS conditions arise in the well-known way by considering spectral projections of an adapted operator on the boundary. See \eqref{eq:f0split} for details. Under suitable identifications our spectral projection $E_0^+$ would correspond to the Calder\'on projector for the original second-order equation.

We prove such APS solvability results in
Theorems~\ref{thm:N} and \ref{thm:D}  
in Section~\ref{sec:aps},  from  which
Theorem~\ref{thm:ND} is an immediate corollary.
Theorem~\ref{thm:ND} extends results from  \cite{AR2},
in that we both allow for domains $\Omega$ with arbitrary
topology and for $A_2$ degenerate equations.
Moreover, the Fredholm argument presented in Section~\ref{sec:L2trace}
is new; the proof from \cite[Sec.~16]{AR2} 
does not generalize to domains with arbitrary topology.
We limit ourselves to compact manifolds $M$ in Theorem~\ref{thm:ND},
for which the curvature  and injectivity radius hypotheses  in Theorem~\ref{thm:main} hold trivially.
This restriction is necessary for the Fredholm arguments involved in the proof,
which localize the Carleson regularity condition to the 
$\delta$-neighbourhood of $\bdy\Omega$. 
We remark that the dependence of the implicit constant in the estimates of 
$\|\nabla u\|_\mX$ and $\|\nabla u\|_\mY$
in Theorems~\ref{thm:ND}, \ref{thm:dirneu}, \ref{thm:N}
and \ref{thm:D} is hard to trace, as its finiteness is proved by an 
application of the Open Mapping Theorem. 

It is important to note that the APS solvability results discussed
above require only the Carleson regularity condition on the
coefficients near $\bdy \Omega$.
The reason is that the APS boundary condition is defined in
terms of operators in the $H^\infty$ functional calculus of the boundary
operator $DB$.
However, solution operators for the classical Neumann and Dirichlet BVPs, 
such as the
Dirichlet-to-Neumann map, are outside of this functional calculus, 
just like the Riesz transform
in \eqref{eq:KatoRiesz} is outside the $H^\infty$ functional calculus of
 $\divv A \nabla$.
As a consequence, $L^2$ solvability of the Neumann and Dirichlet BVPs
requires further assumptions on the coefficients, notably self-adjointness.
See \cite[Sec.~4]{AAMc2}.
We prove in Section~\ref{sec:dirneu} the following.

\begin{Thm}  \label{thm:dirneu}
   Let $\Omega$, $\rho$, $M$, $\omega$ and $A$ be as in 
   Theorem~\ref{thm:ND}, and assume that
   $\|A_\rho/\omega_\rho-A_0/\omega_0\|_*$ and
   $\|(A_0^*-A_0)/\omega_0\|_\infty$ are small enough, 
depending on $[\omega_0]_{A_2(M)}$, $\|A_0/\omega_0\|_\infty$ and the
constant in \eqref{eq:bdyaccr}.
Then Neumann and Dirichlet regularity solvability estimates
$$
\|\nabla u\|_\mX\lesssim\min\Big(\|\pd_{\nu_{A_0}}u_\rho|_M\|_{L^2(M,\omega_0^{-1})},
 \|\nabla_M u_\rho|_M\|_{L^2(TM,\omega_0)}+ \sum_i \Big|\int_{M_i}\pd_{\nu_{A_\rho}}u_\rho d\mu\Big|\Big)
$$
hold
for all weak solutions to $\div A\nabla u=0$ in $\Omega$ with
$\|\nabla u\|_\mX<\infty$, and Dirichlet solvability estimates
$$
\|\nabla u\|_\mY\lesssim  \|\nabla_M u_\rho|_M\|_{W^{-1,2}(TM,\omega_0)}
+ \sum_i \Big|\int_{M_i}\pd_{\nu_{A_\rho}}u_\rho d\mu\Big|
$$
hold
for all weak solutions to $\div A\nabla u=0$ in $\Omega$ with
$\|\nabla u\|_\mY<\infty$.
Here $M_i$ denote the connected components of $M$, and
the implicit constants in the estimates of $\|\nabla u\|_\mX$ and
$\|\nabla u\|_\mY$ also depend on the constant in 
\eqref{eq:intaccr}.
\end{Thm}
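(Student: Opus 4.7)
The plan is to bootstrap Theorem~\ref{thm:ND} using a Rellich-type identity on the boundary manifold $M$, which becomes available because $A_0$ is assumed to be nearly self-adjoint. First I would reduce the PDE $\div A\nabla u=0$ on $\Omega$, via pull-back by $\rho$, to a first-order Cauchy--Riemann system $\partial_t F+DB F=0$ on the cylinder $[0,\delta)\times M$, where $F=(\partial_{\nu_{A_\rho}}u_\rho,\nabla_M u_\rho)$ and $B$ is built from $A_\rho/\omega_\rho$ via the Schur-complement transform of \eqref{eq:defntranformedB}. By Theorem~\ref{thm:ND} applied in the small-Carleson regime, both directions of \eqref{eq:Xest0} and \eqref{eq:Yest0} are available, so $\|\nabla u\|_\mX$ and $\|\nabla u\|_\mY$ are already comparable to the \emph{sum} of the conormal and tangential boundary norms. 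Thus it suffices to prove, modulo the finite-dimensional correction $\sum_i|\int_{M_i}\partial_{\nu_{A_\rho}}u_\rho\,d\mu|$, the Rellich-type comparability
\[
\|\partial_{\nu_{A_0}}u_\rho|_M\|_{L^2(M,\omega_0^{-1})}\eqsim\|\nabla_M u_\rho|_M\|_{L^2(TM,\omega_0)}
\]
for the boundary data of solutions, and the analogous $W^{-1,2}$ version.

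The central step, and I expect it to be the main obstacle, is the Rellich identity when $A_0$ is only \emph{approximately} self-adjoint. When $A_0=A_0^*$, a standard integration-by-parts argument on the cylinder, applied to the energy $\re\int_M\langle A_0\nabla u,\nabla u\rangle\omega_0\,d\mu$ together with $\partial_t F = -DB F$, produces a boundary identity that equates $\|\partial_{\nu_{A_0}}u_\rho|_M\|_{L^2(M,\omega_0^{-1})}^2$ with $\|\nabla_M u_\rho|_M\|_{L^2(TM,\omega_0)}^2$ up to lower order terms involving integrals over the connected components $M_i$, exactly the deficit appearing in the statement. These latter terms arise because for each $M_i$ the constants are in the kernel of the tangential gradient, and must be recovered via the mean of the Neumann datum. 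For the general case, I would write $A_0=\tfrac12(A_0+A_0^*)+\tfrac12(A_0-A_0^*)$ and treat the skew-symmetric part as a perturbation; the small-norm assumption on $(A_0^*-A_0)/\omega_0$, quantified against $[\omega_0]_{A_2(M)}$, $\|A_0/\omega_0\|_\infty$, and the accretivity constant in \eqref{eq:bdyaccr}, then keeps the perturbed Rellich estimate away from degeneracy.

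Having established the Rellich comparability, I would combine it with the two-sided bounds of Theorem~\ref{thm:ND}. For the Neumann side of the $\mX$ estimate, starting from
\[
\|\nabla u\|_\mX\lesssim\|\partial_{\nu_{A_\rho}}u_\rho|_M\|_{L^2(M,\omega_0^{-1})}+\|\nabla_M u_\rho|_M\|_{L^2(TM,\omega_0)},
\]
I would replace the second summand using Rellich, absorb the contribution of $A_\rho-A_0$ on the boundary into the first summand using the boundary trace assumption, and arrive at the Neumann bound. For the Dirichlet side of the $\mX$ estimate, and for the $\mY$ estimate, I would instead use Rellich to replace the first summand by the tangential norm plus the means over $M_i$; the means appear precisely as the Fredholm obstruction to inverting the Neumann datum from the Dirichlet datum on multiply connected components of $M$. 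The same Rellich mechanism applies in the $W^{-1,2}$ topology by duality, using the square-function norm $\mY$ in place of $\mX$ and the $W^{-1,2}$ trace convergence \eqref{eq:Wminoneconv}, and yields the Dirichlet $\mY$ estimate.

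Finally, to close the argument quantitatively, I would verify that the smallness constants for $\|A_\rho/\omega_\rho-A_0/\omega_0\|_*$ and $\|(A_0^*-A_0)/\omega_0\|_\infty$ can be chosen compatibly: the first controls the Carleson perturbation from $A_\rho$ to $A_0$ at the boundary and is inherited from Theorem~\ref{thm:ND}, while the second controls the error in the Rellich identity and depends only on the boundary data of $A_0/\omega_0$ together with $[\omega_0]_{A_2(M)}$ and the accretivity constant. All implicit constants are then traceable back through Theorem~\ref{thm:main} and the Open Mapping Theorem as in the remark following Theorem~\ref{thm:ND}.
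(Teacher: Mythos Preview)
Your reduction to a Rellich-type comparability for the full boundary trace $f_0$ is where the argument breaks down. The integration-by-parts identity on the cylinder that you invoke does not produce a comparability between $\|(f_0)_\perp\|$ and $\|(f_0)_\parallel\|$: since $\Omega$ is compact, integrating $\partial_t\langle NF_t, B_0 F_t\rangle$ over $(0,\delta)$ leaves an uncontrolled interior term at $t=\delta$, not a finite-dimensional correction. Equivalently, the trace $f_0$ of a solution on $\Omega$ lies in neither Hardy subspace $E_0^\pm\mH$ alone; by \eqref{eq:tracesplitrepr} it carries a nontrivial $E_0^-$ component coming from the compact operator $\widetilde S(\eta'f)$, and nothing prevents cancellation between $(E_0^+f_0)_\perp$ and $(E_0^-f_0)_\perp$ while $(E_0^+f_0)_\parallel$ and $(E_0^-f_0)_\parallel$ fail to cancel. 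The comparability you assert is therefore a \emph{consequence} of Theorem~\ref{thm:dirneu}, not an independent input.

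The paper's route is structurally different. The Rellich identity (Lemma~\ref{lem:rellich}) is proved only for $h\in E_0^\pm\mH$, using an auxiliary Cauchy extension $h_t=e^{-t\Lambda}h$ to the \emph{infinite} half-cylinder with $t$-independent coefficients $\widetilde B$, where decay at infinity kills the far boundary term. This yields that the eight restricted projections $N^\pm|_{E_0^\pm\mH}$ and $E_0^\pm|_{N^\pm\mH}$ are isomorphisms (Proposition~\ref{prop:restrictproj}), via a continuity-of-index argument deforming from block-diagonal to general self-adjoint $\tilde A$, and then perturbing to nearby non-self-adjoint $A_0$. The solvability estimates then come not from Theorem~\ref{thm:ND} plus a Rellich bound on $f_0$, but by chaining the APS estimate $\|\nabla u\|_\mX\lesssim\|E_0^+f_0\|+\|E_0^0f_0\|$ from Theorem~\ref{thm:N} with $\|E_0^+f_0\|\lesssim\|(E_0^+f_0)_\perp\|$ from the restricted-projection isomorphism, and then with $\|(E_0^+f_0)_\perp\|\le\|(f_0)_\perp\|+\epsilon\|\eta f\|_\mX+\|Kf\|_\mX$ read off from \eqref{eq:tracesplitrepr}. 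The resulting semi-Fredholm estimate, together with injectivity via Green's identity \eqref{eq:Green}, closes the $\mX$ argument. In the $\mY$ case, injectivity additionally requires a regularity bootstrap from $\mY$ to $\mX$ using a Hankel-type operator $H:E_0^-\mH^{-1}\to E_0^+\mH^{-1}$; your proposal omits the Fredholm/compactness layer entirely and does not address this bootstrap.
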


Our solvability estimates for the Neumann and Dirichlet
BVPs in Theorem~\ref{thm:dirneu} and for the Atiyah--Patodi--Singer 
BVP in Theorems~\ref{thm:N} and \ref{thm:D}  are new in that we go beyond 
self-adjoint coefficients, as well as allow for degenerate
coefficients. Our method relies on the boundedness of the
functional calculus of the boundary $DB$ operator, the 
construction of which depends on the Lipschitz parametrization $\rho$
of the boundary $\bdy \Omega$.
In the weighted $L^2$ and $W^{-1,2}$ spaces this
yields not only solvability estimates, but also analytical
dependence of the solution operators on the coefficients.
However, at this level of generality of the coefficents
there is little room for generalizations to other 
function spaces and boundaries rougher than Lipschitz,
as considered for example in \cite{Mitrea3Taylor:16}.

\subsection*{Acknowledgements}

The authors acknowledge support for several collaborative visits which enabled this work, from the Universit\'e Paris-Saclay, the Mathematical Institute at the University of Oxford, Chalmers University of Technology and the University of Gothenburg, and the Hausdorff Research Institute for Mathematics in Bonn. We thank David Rule for interesting mathematical discussions at the outset of this project. A special thanks to Lashi Bandara for his continued interest and patience while this manuscript was being completed.   Finally, we thank the anonymous referees and Gianmarco Brocchi for comments that improved the paper.

\section{Setup for quadratic estimates}   \label{sec:2}

This section aims at making the statement 
of Theorem~\ref{thm:main} precise. 
We begin with geometric preliminaries in order to define useful coverings for manifolds with a lower Ricci curvature bound and positive injectivity radius. The relevant weights and differential operators are then made precise in this context. A brief overview of the $H^\infty$ functional calculus for bisectorial operators is also included.

\subsection{Coverings} \label{sec:covering} 

Let $(M,g)$ denote a complete, not necessarily compact, Riemannian manifold $M$ with Riemannian metric $g$ and dimension 
$n=\dim(M)$, which is smooth in the sense that both the transition maps between coordinate charts on $M$, and the coefficients of the metric $g$ in coordinate charts, are infinitely differentiable.
We denote the geodesic distance by $d$ and the Riemannian measure by~$\mu$. A ball $B(x,r)$ in $M$ always refers to an open geodesic ball $B(x,r)=\{y\in M : d(x,y)<r\}$ whilst $V(x,r)=\mu(B(x,r))$ and $(\alpha B)(x,r)= B(x,\alpha r)$ for $\alpha>0$, $r>0$ and $x\in M$. For $E$, $F \subseteq M$, we set $d(E,F): = \inf \{d(E,F) : x\in E, y\in F\} $ and let $\ca_E$ denote the characteristic function of $E$.

We assume that the Ricci curvature on the manifold is bounded below in the sense that $\Ric(M,g)\geq -K g$, as bilinear forms, for some $K<\infty$. The results of Saloff-Coste~\cite[Thm.~5.6.4]{SC}, see also 
Hebey~\cite[Thm.~1.1]{H} and \cite[Lem.~1.1]{H}, then show that $M$ has exponential volume growth in the sense that 
\begin{equation} \label{eq:expgrowth}
0<V(x,\alpha r)\leq \alpha^n e^{\lambda \alpha r} V(x,r)<\infty,
\qquad \alpha\geq1,\ r>0,\ x\in M,
\end{equation}
where $\lambda= \sqrt{(n-1)K}$. It is also shown there that for any $0<T<\infty$, there exists a sequence $(x_i)_{i\in\N}$ of points in $M$ such that the balls $(B(x_i,T/2))_{i\in\N}$ are pairwise disjoint. Moreover, for each $\alpha \geq 1$, the collection $(B(x_i,\alpha T))_{i\in\N}$ is a locally finite covering of $M$ with the property that the number of balls in the collection intersecting any given ball from the collection is at most $N<\infty$, where $N$ depends only on $\alpha$, $T$, $n$ and $K$.

Our final geometric assumption is that the manifold also has a positive injective radius $\inj(M,g)\geq r_E>0$. Under this assumption, and the lower
bound on Ricci curvature, the existence of charts with $L^\infty$ control of the metric was proved by
Anderson and Cheeger~\cite[Thm.~0.3]{AC}. Their results show that for any $\beta\in(0,1)$ and any $C_{gbg}>1$, there exists $r_H>0$, depending only on $n$, $K$, $r_E$, $\beta$ and $C_{gbg}$, such that at each $x\in M$ there is a (harmonic) coordinate chart $\varphi:B(x,r_H)\rightarrow \R^n$ with a certain $C^{0,\beta}$-control of the metric, and which implies that 
\begin{equation}\label{eq:hr0}
C_{gbg}^{-1} \leq g(y) \leq C_{gbg}, \qquad y\in B(x,r_H),
\end{equation}
as a bilinear form. A typical advantage of working in such harmonic coordinates, as opposed to geodesic normal coordinates, is that Ricci curvature bounds can often supplant bounds on the full Riemann curvature tensor. This is ultimately what enables the bound \eqref{eq:hr0}. The precise formulation of this result can be found in the work by Hebey and Herzlich \cite[Thm.~6~\&~Cor.]{HH} (see also \cite[Thm.~1.3]{H:96}), which also recovered the $C^{1,\beta}$-estimates for the metric obtained by Anderson in \cite{An:90}, and established more general $C^{k,\beta}$-estimates, all of which require higher-order bounds on the Ricci curvature that are not assumed in this paper.

\subsection{Weights and operators}   \label{sec:weightsops}

A weight $\omega$ on $M$ refers to a non-negative measurable function $\omega:M\rightarrow[0,\infty]$. For $R>0$, let $A_2^R(M)$ denote the set of weights satisfying
\[
[\omega]_{A_2^R(M)} = \sup_{x\in M, r\leq R} \left(\barint_{B(x,r)} \omega \ d\mu \right) \left( \barint_{B(x,r)}  \frac{1}{\omega} \ d\mu \right) < \infty,
\]
where $\barint_B f d\mu=\mu(B)^{-1}\int_B f d\mu$ denotes the integral average with respect to the integral measure. Let
$A_2(M)$ denote the set of weights such that
$[\omega]_{A_2(M)}=\sup_{R>0}[\omega]_{A_2^R(M)}<\infty$.

  A vector bundle $\mathcal{W}$ on $M$ refers to a smooth complex vector bundle $\pi:\mathcal{W}\rightarrow M$ equipped with a smooth Hermitian metric $\langle\cdot,\cdot\rangle_{\mathcal{W}_x}$ on the fibres $\mathcal{W}_x$.
For a weight $\omega$ on $M$, the Hilbert space $L^2(\mathcal{W},\omega)$ of weighted square integrable sections of $\mathcal{W}$ has the inner product
\[
\langle u, v \rangle_{L^2(\mathcal{W},\omega)} = \int_M \langle u(x), v(x) \rangle_{\mathcal{W}_x} \ \omega(x)\ d\mu(x).
\]
A linear operator $T$ on $L^2(\mathcal{W},\omega)$ has a domain $\Dom(T)$, range $\Ran(T)$ and null space $\Nul(T)$ that are subspaces of $L^2(\mathcal{W},\omega)$.
 For the trivial bundle $M\times \C$, we often write $L^2(M,\omega)$ 
instead of $L^2(  M\times \C,\omega)$.  
We will consider in particular the complexified tangent bundle $TM$ and the endomorphism bundle $\End(TM)$, each equipped with the Hermitian metric induced by the Riemannian metric $g$ on $TM$. 
We identify the dual cotangent bundle $T^*M$ with $TM$
through the metric $g$.

We now introduce the first-order systems $DB$ considered in 
Theorem~\ref{thm:main}. 
The differential operator $D$ uses weak gradient
 and divergence operators $\nabla_M$ and $\div_M$
 acting on weighted $L^2$ spaces on $M$.
 For the definition and formulas for $\nabla$ (exterior derivative on scalar
 functions) and $\div$ (interior derivative on vector fields) acting on
 smooth sections, we refer to \cite[Sec.~11.2]{R2}.
 A vector field $f\in L^1_\loc(TM)$ is the weak gradient of a 
 function $u\in L^1_\loc(M)$ if 
$\scl{f}{\phi}= -\scl{u}{\div\phi}$ for all $\phi\in C^\infty_c(TM)$,
and we write $\nabla u=f$.
A function $f\in L^1_\loc(M)$ is the weak divergence of a
vector field $v\in L^1_\loc(TM)$ if
$\scl{f}{\phi}= -\scl{v}{\nabla\phi}$ for all $\phi\in C^\infty_c(M)$,
and we write $\div v=f$.

\begin{Lem}   \label{lem:pullpush}
Let $\rho: M\to N$ be a bi-Lipschitz map between 
Riemannian manifolds $M$ and $N$. Denote by 
$d\rho\in L^\infty(\End(TM,TN))$ and 
$J_\rho\in L^\infty(M)$ the Jacobian matrix and determinant of $\rho$.

(i) Let $u\in L^1_\loc(N)$ have weak gradient $\nabla u\in L^1_\loc(TN)$. Then the weak gradient of $u\circ\rho\in L^1_\loc(M)$ exists and equals
$$
  \nabla (u\circ \rho)= \rho^*(\nabla u),
$$
where $(\rho^* v)(x)= (d\rho_x)^* v(\rho(x))$, $x\in M$, denotes the pullback of a vector field $v$.

(ii)
Let $v\in L^1_\loc(TM)$ have weak divergence $\div v\in L^1_\loc(M)$.
Define the {\em Piola transform} $(\tilde \rho_* v)(\rho(x))= J_\rho(x)^{-1}(d\rho_x) v(x)$, $x\in M$, of the vector field $v$. Then the weak divergence of 
$\tilde \rho_* v\in L^1_\loc(N)$ exists and equals
$$
  \div (\tilde \rho_* v)= J_\rho^{-1}(\div v)\circ\rho^{-1}.
$$
\end{Lem}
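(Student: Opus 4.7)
The plan is to prove part (i) by mollifying $u$ and using the classical chain rule, and then to derive part (ii) from (i) by a duality argument that exploits the algebraic cancellation between the Piola factor and the Jacobian from the change of variables.

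\textbf{Part (i).} Since $\rho$ is bi-Lipschitz, Rademacher's theorem gives $d\rho\in L^\infty(\End(TM,TN))$ and $J_\rho, J_\rho^{-1}\in L^\infty(M)$, so the bi-Lipschitz change of variables formula is available with a uniformly bounded Jacobian. Working in local coordinates, I would mollify $u$ to obtain smooth $u_\epsilon$ with $u_\epsilon\to u$ in $L^1_\loc(N)$ and $\nabla u_\epsilon\to \nabla u$ in $L^1_\loc(TN)$. For each smooth $u_\epsilon$, the classical chain rule (valid because $\rho$ is differentiable a.e.) yields $\nabla(u_\epsilon\circ\rho)=\rho^*(\nabla u_\epsilon)$ pointwise a.e. The bounded Jacobian transfers $L^1_\loc$ convergence on $N$ to $L^1_\loc$ convergence on $M$, so $u_\epsilon\circ\rho\to u\circ\rho$ and $\rho^*(\nabla u_\epsilon)\to \rho^*(\nabla u)$ in $L^1_\loc$. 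Passing to the limit for $\phi\in C^\infty_c(TM)$ in
$$
\int_M \scl{\rho^*(\nabla u_\epsilon)}{\phi}\, d\mu_M = -\int_M (u_\epsilon\circ\rho)\div\phi\, d\mu_M
$$
then identifies $\rho^*(\nabla u)$ as the weak gradient of $u\circ\rho$.

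\textbf{Part (ii).} The plan here is to use (i) by duality. Fix $\psi\in C^\infty_c(N)$. Under the change of variables $y=\rho(x)$, the Jacobian $J_\rho(x)$ exactly cancels the $J_\rho^{-1}$ built into $\tilde\rho_*$, and the adjoint relation $\scl{(d\rho)u}{w}=\scl{u}{(d\rho)^* w}$ gives
$$
\int_N \scl{\tilde\rho_* v}{\nabla\psi}\, d\mu_N = \int_M \scl{v}{(d\rho)^*(\nabla\psi\circ\rho)}\, d\mu_M = \int_M \scl{v}{\rho^*(\nabla\psi)}\, d\mu_M.
$$
By part (i) this equals $\int_M \scl{v}{\nabla(\psi\circ\rho)}\, d\mu_M$, and since $\psi\circ\rho$ is Lipschitz with compact support it is admissible as a test function for $\div v$ (after a brief mollification in coordinates). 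The weak divergence hypothesis on $v$ therefore converts this to $-\int_M (\psi\circ\rho)\div v\, d\mu_M$, and a final change of variables yields $-\int_N \psi\cdot J_\rho^{-1}(\div v)\circ\rho^{-1}\, d\mu_N$. Comparing with the definition of weak divergence on $N$ gives the stated formula.

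The only mild technical point is the admissibility of Lipschitz compactly supported test functions in the weak divergence identity for $v$; this is handled by a standard mollification argument with uniform bounds provided by $J_\rho^{\pm 1}\in L^\infty$. I do not foresee a serious obstacle — the substance of the lemma is the compatibility between the adjoint of $d\rho$ and the change of variables, which the Piola transform is precisely designed to encode.
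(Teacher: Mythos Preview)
Your proof is correct. The paper's own treatment is very brief: it cites \cite[Lem.~10.2.4]{R2} and observes that (i) is the commutation of pullback with the exterior derivative, while (ii) follows from (i) by Hodge duality, since $-\div$ and $\tilde\rho_*$ are the $L^2$ adjoints of $\nabla$ and $\rho^*$ respectively. Your argument is a more explicit, self-contained unpacking of exactly this: you prove (i) directly by mollification and the a.e.\ chain rule for a smooth function composed with a Lipschitz map, and then you derive (ii) from (i) via the integration-by-parts duality and the bi-Lipschitz change of variables, which is precisely the adjoint relation the paper alludes to. The substance is the same; what your version buys is that it does not rely on the exterior-calculus formalism or an external reference, at the cost of having to handle the admissibility of the Lipschitz test function $\psi\circ\rho$ by hand (which you correctly flag and dispatch by mollification with uniform $W^{1,\infty}$ bounds).
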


The proof follows from a straightforward generalization of 
\cite[Lem.~10.2.4]{R2} to locally integrable differential forms
on manifolds.
The identities (i) and (ii) are dual in the sense that $-\div$ and 
$\tilde \rho_*$ are $L^2$ adjoints of $\nabla$ and $\rho^*$
respectively. The chain rule (i), and also (ii) via Hodge duality, are  
instances of the well known commutation between pullback
and exterior derivative.

\begin{Eg}  \label{ex:pullcoeffs}
Let $\rho: M\to N$ be a bi-Lipschitz map and consider
a divergence form equation $\div A\nabla u=0$ on $N$, 
where $A\in L^\infty(\End(TN))$.
Define $f= u\circ \rho$ on $M$.
Lemma~\ref{lem:pullpush}(i) shows that $\nabla u= (\rho^*)^{-1}\nabla f$
and Lemma~\ref{lem:pullpush}(ii) applied to $\rho^{-1}$ yields
$$
  0= J_\rho(\div A\nabla u)\circ \rho= \div(\tilde\rho_*^{-1}A (\rho^*)^{-1}) \nabla f.
$$
We refer to the coefficents 
$A_\rho= \tilde\rho_*^{-1}A (\rho^*)^{-1}= 
J_\rho (d\rho)^{-1}(A\circ\rho) (d\rho^*)^{-1}$
as the {\em pulled back coefficients} on $M$.
\end{Eg}

Let $\omega\in A^R_2(M)$ for some $R>0$.
We define $\nabla_M$ to be the closed operator
$\nabla_M: L^2(M,\omega)\to L^2(TM,\omega): u\mapsto \nabla u$ with domain
$$
  \Dom(\nabla_M)= W^{1,2}(M,\omega)=
  \sett{u\in L^2(M,\omega)}{\nabla u\in L^2(TM,\omega)}.
$$
More precisely, $\nabla u\in L^2(TM,\omega)$ means that
the weak gradient of $u$ exists and belongs to $L^2(TM,\omega)$.
We define $\div_M$ to be the closed operator
$\div_M: L^2(TM,\tfrac 1\omega)\to L^2(M,\tfrac 1\omega)
: v\mapsto \div v$ with domain
$$
  \Dom(\div_M)=
  \sett{v\in L^2(TM,\tfrac 1\omega)}{\div v\in L^2(M,\tfrac 1\omega)}.
$$
More precisely, $\div v\in L^2(M,\tfrac 1\omega)$ means that
the weak divergence of $v$ exists and belongs to $L^2(M,\tfrac 1\omega)$.
Using that multiplication $\omega: L^2(M,\omega)\to L^2(M,\tfrac 1\omega)$
and $\omega: L^2(TM,\omega)\to L^2(TM,\tfrac 1\omega)$ 
is an isometry, we obtain a closed operator
$$
    \div_{M,\omega}= \tfrac 1\omega\div_M \omega:
     L^2(TM,\omega) \rightarrow L^2(M,\omega),
$$ 
which is unitary equivalent to $\div_M$.

  Our definitions of $\dom(\nabla_M)$ and $\dom(\divv_M)$ above use the weak distributional definition of the differential operators on the smooth manifold and the natural weighted domains. The following lemma shows that this is equivalent to the more well-known definition involving closures from smooth functions.

\begin{Lem}    \label{lem:ddeladj}
  Let $M$ and $\omega$ be as in Theorem~\ref{thm:main}.
  Then the 
  smooth compactly supported functions $C^\infty_c(M)$ are dense in 
  $\Dom(\nabla_M)$, in graph norm,
  and $C^\infty_c(TM)$ is dense in $\Dom(\div_M)$, in graph norm.
  Moreover, $\nabla_M$ and $-\div_M$ are adjoint under the unweighted 
  $L^2(M)$ pairing in the sense of unbounded operators, or equivalently,
  $\nabla_M$ and $-\div_{M,\omega}$ are adjoint under the weighted
  $L^2(M,\omega)$ pairing in the sense of unbounded operators.
\end{Lem}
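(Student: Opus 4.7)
The plan is to establish the density assertions first, and then derive the adjoint relation as a short consequence of those and the definitions of weak gradient and weak divergence. For the density I use the standard three-step approach: truncate to compact support, localize via coordinate charts, then mollify in Euclidean coordinates. The adjoint part is essentially a definition chase.

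For density of $C^\infty_c(M)$ in $\Dom(\nabla_M)$, fix $u \in \Dom(\nabla_M)$ and a basepoint $x_0 \in M$. Since $M$ is complete, $x \mapsto d(x,x_0)$ is $1$-Lipschitz, so there exist smooth cutoffs $\eta_R$ with $\eta_R \equiv 1$ on $B(x_0,R)$, $\supp\eta_R \subset B(x_0,2R)$, and $\|\nabla\eta_R\|_\infty \leq C/R$. The product rule
$$
\nabla(\eta_R u) = \eta_R \nabla u + u\nabla\eta_R,
$$
together with dominated convergence and the bound $\|u\nabla\eta_R\|_{L^2(TM,\omega)}\leq (C/R)\|u\|_{L^2(M,\omega)}\to 0$, reduces the problem to $u$ having compact support. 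Cover $\supp u$ by finitely many Anderson--Cheeger charts $\varphi_i: B(x_i,r_H)\to \R^n$ as in Section~\ref{sec:covering} satisfying \eqref{eq:hr0}, choose a smooth partition of unity $\{\psi_i\}$ adapted to this cover, and treat each piece $u_i := \psi_i u$ separately. Pulling $u_i$ back via $\varphi_i^{-1}$ gives a compactly supported function in $W^{1,2}(U_i,\tilde\omega_i)$ on a bounded $U_i \subset \R^n$, where $\tilde\omega_i$ is the weight transported by the chart. The two-sided bound \eqref{eq:hr0} makes $\varphi_i$ bi-Lipschitz with Jacobian bounded above and below, so $\tilde\omega_i$ is an $A_2$ weight on $U_i$ with quantitatively controlled constant. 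The classical mollifier-based density theorem for $A_2$-weighted Sobolev spaces on $\R^n$ then supplies smooth compactly supported approximants in $U_i$, which we push forward and sum. The argument for $\div_M$ is entirely parallel: the weight $1/\omega$ lies in $A_2^R(M)$ with the same constant, and the relevant product rule reads $\div(\eta v) = \eta\,\div v + \scl{\nabla\eta}{v}$.

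The adjoint relation now follows by a definition chase. By the definitions of weak gradient and weak divergence,
$$
\int_M u\,\overline{\div v}\,d\mu = -\int_M \scl{\nabla u}{v}\,d\mu
$$
whenever $u \in C^\infty_c(M)$ and $v \in C^\infty_c(TM)$. Using the density just established, together with the Cauchy--Schwarz pairing between $L^2(M,\omega)$ and $L^2(M,1/\omega)$ (and the analogous one on $TM$), this identity extends to all $u \in \Dom(\nabla_M)$ and $v \in \Dom(\div_M)$, showing that $-\div_M \subseteq (\nabla_M)^*$. Conversely, if $v \in \Dom((\nabla_M)^*)$, then testing $\int \scl{\nabla u}{v}\,d\mu = \int u\,\overline{(\nabla_M)^* v}\,d\mu$ against all $u \in C^\infty_c(M)\subset \Dom(\nabla_M)$ shows that the weak divergence of $v$ exists in $L^2(M,1/\omega)$ and equals $-(\nabla_M)^*v$, whence $v \in \Dom(\div_M)$. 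The equivalent formulation with $-\div_{M,\omega}$ under the weighted pairing is immediate from the unitary equivalence $\div_{M,\omega} = \tfrac{1}{\omega}\div_M\omega$.

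The main technical obstacle is the pullback step: one must check that a local $A_2$ weight on $M$ transfers to an $A_2$ weight on each Euclidean chart with quantitatively controlled constant. This is precisely where \eqref{eq:hr0}, rather than mere existence of smooth charts, plays its role, since it provides bi-Lipschitz comparison of balls together with two-sided Jacobian bounds, both of which are needed to preserve Muckenhoupt averages under pullback and push forward.
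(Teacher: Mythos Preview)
Your proof is correct and follows essentially the same approach as the paper: localize to charts, pull back to $\R^n$, and mollify using the weighted Euclidean density lemma (the paper cites \cite[Lem.~2.2]{ARR}), with the adjoint relation then read off from the definitions and density. The only minor differences are that the paper uses a global locally finite partition of unity and selects a finite sub-sum, rather than first truncating to compact support with a single cutoff, and that for $\div_M$ the paper explicitly invokes the Piola transform from Lemma~\ref{lem:pullpush}(ii) to make the divergence commute with the chart map---a detail your phrase ``entirely parallel'' glosses over but which is needed since vector fields do not pull back like scalars.
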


\begin{proof}
  From the coverings in Section~\ref{sec:covering}, we obtain
  a locally finite partition of unity $\sum_i\eta_i=1$ on $M$,
  with $0\le \eta_i\le 1$ smooth and supported on balls $B(x_i,T)$ with $0<T<R$
  and $\sup_{i,x} |\nabla \eta_i(x)|<\infty$.
  We first prove that $C^\infty_c(M)$ is dense in 
  $\Dom(\nabla_M)$.
  Given $u\in \Dom(\nabla_M)$ and $\epsilon>0$, we 
  can choose a finite index set $I$ such that
  $$
  \int_M\left( \Big|\sum_{i\notin I} \eta_i u\Big|^2+
  \Big|\nabla \sum_{i\notin I} \eta_i u\Big|^2 \right)\omega d\mu
  \lesssim
   \int_{\bigcup_{i\notin I} B(x_i,T)}\left( |u|^2+
  |\nabla u|^2 \right)\omega d\mu <\epsilon^2
  $$ 
  since $\int_M\left( |u|^2+
  |\nabla u|^2 \right)\omega d\mu<\infty$.
  Therefore it suffices to find, for each $i\in I$,  
  $\tilde u_i\in C^\infty_c(B(x_i,T))$ 
  such that 
\begin{equation}   \label{eq:tildeuitildevi}
   \int_{B(x_i,T)}\left( |\eta_i u-\tilde u_i|^2+
  |\nabla (\eta_i u-\tilde u_i)|^2 \right)\omega d\mu <(\epsilon/|I|)^2.
\end{equation}
  Indeed, $\sum_{i\in I} \tilde u_i\in C^\infty_c(M)$
  and $u-\sum_{i\in I} \tilde u_i= \sum_{i\notin I} \eta_i u+ 
  \sum_{i\in I} (\eta_i u-\tilde u_i)$ can be estimated by the 
  triangle inequality.
  To construct $\tilde u_i$, we use a chart 
  $\varphi: B(x_i,T)\to \Omega\subset \R^n$, and
  consider $v_i=(\eta_i u)\circ\varphi^{-1}$.
  It follows from Lemma~\ref{lem:pullpush}(i), that 
  $v_i\in L^2(\Omega, \tilde\omega)$, $\nabla v_i\in L^2(T\Omega,\tilde\omega)$, where $\tilde\omega= \omega\circ\varphi^{-1}$.
  Note that the weighted estimates holds since pullback  by
  $\varphi$ and $\varphi^{-1}$ are bounded pointwise, and hence 
  bounded between the 
  weighted $L^2$ spaces.
  Next we use \cite[Lem.~2.2]{ARR} to construct $\tilde v_i\in C^\infty_c(\Omega)$ such that
 $$
   \int_{\Omega}\left( |v_i-\tilde v_i|^2+
  |\nabla (v_i-\tilde v_i)|^2 \right)\tilde\omega dx <(\epsilon/|I|)^2.
  $$ 
  Note that $\tilde\omega$ being defined only in $\Omega$ does not 
  pose a problem since we use a mollifier $\varphi_\epsilon\in C^\infty_c(\R^n)$
  with support near $x=0$ in \cite[Lem.~2.2]{ARR}.
  (Alternatively, an auxiliary extended $A_2(\R^n)$ weight can be constructed as in Lemma~\ref{lem:extweight} below.)
  Setting $\tilde u_i= \tilde v_i\circ\varphi$ yields 
  \eqref{eq:tildeuitildevi} in $B(x_i,T)$.
  
  A similar argument shows that $C^\infty_c(TM)$ is dense in $\Dom(\div_M)$. We replace $\nabla$ by $\div$ in the argument above,  
  set $v_i=\tilde \varphi_*(\eta_i u)$ and use Lemma~\ref{lem:pullpush}(ii).
  In mollifying $v_i$ using \cite[Lem.~2.2]{ARR}, we note that convolutions
  commute also with the divergence. 
  
  The duality
  $$
  \int_M \scl{\nabla_M u}v d\mu= -\int_M\scl u{\div_{M} v} d\mu
  $$  
  holds by construction of $\nabla_M$ for all $u\in\Dom(\nabla_M)$
  and all $v\in C^\infty_c(TM)$ and hence, by the above density result,
  for all $v\in \Dom(\div_M)$.
  That $\nabla_M$ and $-\div_M$ are adjoint in the sense of 
  unbounded operators is then immediate from the definition
  of these operators.
\end{proof}

  Henceforth, we fix the vector bundle
\begin{equation}\label{eq:Vdefn}
\mathcal{V}=(M\times \C) \oplus (M\times \C) \oplus TM,
\end{equation}
 
with the induced bundle metric, so that $L^2(\mathcal{V},\omega) = L^2(M,\omega) \oplus L^2(M,\omega) \oplus L^2(TM,\omega)$. 
For a weight $\omega\in A^R_2(M)$, $R>0$, define the self-adjoint operator
\begin{equation}  \label{eq:Ddefn}
D=\begin{bmatrix} 0& I & -\div_{M,\omega}\\ I & 0 & 0 \\ \grad_M & 0 & 0 \end{bmatrix}:
\Dom(D) \subseteq L^2(\mathcal{V},\omega)\rightarrow L^2(\mathcal{V},\omega),
\end{equation}
with $\Dom(D)= W^{1,2}(M,\omega) \oplus L^2(M,\omega) \oplus \Dom(\div_{M,\omega})$. 
Note that $D$ has closed range $\Ran(D)$, due to the zero order identity terms in \eqref{eq:Ddefn}.

Consider $B\in L^\infty(\End(\mathcal{V}))$ and define a bounded multiplication operator $B:L^2(\mathcal{V},\omega) \rightarrow L^2(\mathcal{V},\omega)$ by setting
\begin{equation}  \label{eq:Bsplit}
(Bu)(x) = (B(x))(u(x)), \qquad  x\in M,  u\in L^2(\mathcal{V},\omega).
\end{equation}
In the splitting $\mathcal{V}_x=\C \oplus \C \oplus T_xM$, we write
\begin{equation*}  
u(x)= \begin{bmatrix} u_\no(v) \\ u_0(x) \\ u_\ta(x) \end{bmatrix}
\quad\text{and}\quad
B(x) = \begin{bmatrix} B_{\no\no}(x) & B_{\no 0}(x) & B_{\no\ta}(x)\\ B_{0\no}(x) & B_{00}(x) & B_{0\ta}(x) \\ B_{\ta\no}(x) & B_{\ta 0}(x) & B_{\ta\ta}(x) \end{bmatrix}, \qquad  x\in M,
\end{equation*}
where $B_{\no\no}, B_{\no 0}, B_{0\no}, B_{00} \in L^\infty(M)$, $B_{\ta\no}, B_{\ta 0} \in L^\infty(TM)$, $B_{\no\ta}, B_{0\ta}\in L^\infty(T^*M)=L^\infty(TM)$ and $B_{\ta\ta} \in L^\infty(\End(TM))$.
We assume that $B$ is accretive on $\Ran(D)$ in the sense that
\begin{equation}\label{eq:acc}
\kappa_B = \inf \{\re \langle Bu,u\rangle_{L^2(\mathcal{V},\omega)} / \|u\|_{L^2(\mathcal{V},\omega)}^2 : u\in \Ran(D), u\neq 0\} > 0.
\end{equation}

\subsection{Functional calculus}   \label{sec:funccal}

We recall that a closed and densely defined operator $T$ in a Hilbert
space $\mH$ is called bisectorial, if its spectrum is contained in a closed bisector
$$
  S_\nu=\sett{z\in \C}{|\arg(z)|\le\nu}\cup \{0\}\cup \sett{z\in\C}{|\arg(-z)|\le\nu},
$$
for some $0\le \nu<\pi/2$, with resolvent bounds 
$\|(\lambda I-T)^{-1}\|\lesssim 1/\dist(\lambda, S_\nu)$.
It follows from \cite[Prop.~3.3]{AAMc2} that the operators $BD$ and $DB$ from Section~\ref{sec:weightsops} are bisectorial in 
$L^2(\mathcal{V},\omega)$ for any coefficients $B$ satisfying \eqref{eq:acc}.   The proof therein, which treats the case when $\omega \equiv 1$ and $\mathcal{V}$ is a trivial bundle over $\R^n$, extends immediately to the setting considered here.  
Moreover we have a topological splitting
\begin{equation}   \label{eq:dbsplit}
   L^2(\mathcal{V},\omega)= \Ran(DB)\oplus \nul(DB),
\end{equation}
where in general the two closed subspaces are not orthogonal. 
Injective bisectorial operators $T$ have a bounded
$H^\infty(S^o_\theta)$ functional calculus
for any $\nu<\theta<\pi/2$, provided quadratic estimates
\begin{equation}   \label{eq:qeequiv}
   \int_0^\infty \|\psi_t(T)u\|^2 \frac{dt}{t} \eqsim \|u\|^2, \qquad u\in \mH,
\end{equation}
hold.
Here $\psi(\lambda)= \lambda/(1+\lambda^2)$ and $\psi_t(\lambda)=
\psi(t\lambda)$, and
boundedness of the $H^\infty(S^o_\theta)$ functional calculus means 
that we have an estimate
$$
   \|b(T)\|_{\mH\to\mH}\lesssim \sup_{\lambda\in S^o_\theta}|b(\lambda)|
$$
for all bounded holomorphic functions $b:S^o_\theta\to \C$, where
$$
  S^o_\theta=\sett{z}{|\arg(z)|<\theta}\cup \sett{z}{|\arg(-z)|<\theta}.
$$
We refer to \cite[Sec.~6.1]{AA1} for a short proof of these well known facts,
and we refer to \cite{AlbrechtDuongMcIntosh, AuscherNM:97} for more background,
where the generalization from sectorial to bisectorial operators is
straightforward.
In proving quadratic estimates \eqref{eq:qeequiv}, it suffices to prove
the estimate $\lesssim$ for $T$ and its adjoint $T^*$.
Indeed, by a duality argument the latter implies the estimate
$\gtrsim$. See \cite{AlbrechtDuongMcIntosh}.

For non-injective bisectorial operators $T$, the above applies to
the restriction of $T$ to the invariant subspace $\clos{\Ran(T)}$,
since this gives an injective bisectorial operator,
and the splitting $\mH=\clos{\ran(T)}\oplus \nul(T)$ allows an 
extension of the functional calculus, setting
$$
  b(T)= b(T|_{\clos{\ran(T)}})\oplus b(0)I_{\nul(T)}
$$
  for $b: S_\theta^o\cup\{0\}\to \C$ with 
$b|_{S_\theta^o}\in H^\infty(S_\theta^o)$, where $T|_{\clos{\ran(T)}}=T\mathbb{P}$, $I_{\nul(T)}=I-\mathbb{P}$ and $\mathbb{P}$ denotes the projection onto $\clos{\ran(T)}$ along $\nul(T)$.  
We refer to the function $b$ defining the operator $b(T)$ as the {\em symbol}
of $b(T)$.

The following abstract argument allows us to 
perturb the quadratic estimates from Theorem~\ref{thm:main}.

\begin{Lem}  \label{lem:bddpert}
  Let $T$ be a bisectorial operator in a Hilbert space $\mH$, with local quadratic estimates
  $$
    \int_0^1 \|\psi_t(T) u\|^2 \frac{dt}t\lesssim \|u\|^2,\qquad u\in \mH.
  $$
  Let $V:\mH\to \mH$ be a bounded operator such that $T+V$, with domain $\Dom(T)$, 
  is a bisectorial operator with closed range.
  Then
$$
\int_0^\infty \|\psi_t(T+V)u\|^2 \frac{dt}{t} \lesssim \|u\|^2, \qquad u \in \mH.
$$
\end{Lem}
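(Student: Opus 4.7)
The plan is to split the integral at $t=1$ and treat the two pieces separately. For the piece $\int_0^1$, the idea is to compare $\psi_t(T+V)$ with $\psi_t(T)$ using the resolvent identity and absorb the difference via the boundedness of $V$; the local quadratic estimate for $T$ then handles the rest. For the piece $\int_1^\infty$, the plan is to exploit the closed-range hypothesis on $T+V$ to obtain $O(1/t)$ pointwise decay of $\psi_t(T+V)$ and integrate.

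For the low-frequency piece, I start from the partial fraction decomposition
$$
\psi_t(\lambda) = \frac{1}{2i}\left[(1-it\lambda)^{-1} - (1+it\lambda)^{-1}\right],
$$
and apply the second resolvent identity to obtain
$$
\psi_t(T+V)-\psi_t(T)= \tfrac{t}{2}\bigl[(I-it(T+V))^{-1} V (I-itT)^{-1}+ (I+it(T+V))^{-1} V (I+itT)^{-1}\bigr].
$$
Since both $T$ and $T+V$ are bisectorial with angle less than $\pi/2$, the points $\pm i/t$ lie at distance comparable to $1/t$ from their spectra, giving uniform resolvent bounds $\|(I\pm itT)^{-1}\|, \|(I\pm it(T+V))^{-1}\| \lesssim 1$. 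Hence $\|\psi_t(T+V)u - \psi_t(T)u\| \lesssim t \|V\| \|u\|$, and $\int_0^1 t^2 \, dt/t < \infty$ yields
$$
\int_0^1 \|\psi_t(T+V)u - \psi_t(T)u\|^2 \, \tfrac{dt}{t} \lesssim \|V\|^2 \|u\|^2.
$$
Combined with the assumed local quadratic estimate for $T$ and the triangle inequality, this gives the desired bound on $\int_0^1$.

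For the high-frequency piece, bisectoriality of $T+V$ together with its closed-range hypothesis yields (as in \cite[Prop.~3.3]{AAMc2} or \eqref{eq:dbsplit}) a topological splitting $\mH = \Nul(T+V) \oplus \Ran(T+V)$, with $(T+V)^{-1}$ bounded on $\Ran(T+V)$. On $\Nul(T+V)$ we have $\psi_t(T+V)=0$. On $\Ran(T+V)$, using the identity $\psi_t(\lambda) = t^{-1}\lambda^{-1}(1+t^{-2}\lambda^{-2})^{-1}$, I write
$$
\psi_t(T+V) = \tfrac{1}{t}(T+V)^{-1}\bigl[I+t^{-2}(T+V)^{-2}\bigr]^{-1},
$$
where the bracketed factor is uniformly bounded for $t \geq 1$ by bisectoriality (its symbol is $(1+t^{-2}\lambda^{-2})^{-1}$, holomorphic and bounded on $S_\nu^o$ uniformly in $t\geq 1$). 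Therefore $\|\psi_t(T+V)u\| \lesssim \|u\|/t$ on all of $\mH$, and $\int_1^\infty t^{-2}\, dt/t < \infty$ completes the tail estimate.

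The delicate step is the high-frequency decay: without the closed-range assumption $(T+V)^{-1}$ need not be bounded on $\Ran(T+V)$, and in fact the quadratic estimate can fail at infinity. The low-frequency comparison is routine once one observes that $V$ is not required to respect the structure of $T$; only the boundedness of $V$ and the bisectoriality of $T+V$ enter, via the uniform resolvent bounds noted above.
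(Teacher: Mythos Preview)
Your proof is correct and follows essentially the same two-step strategy as the paper: a resolvent comparison $\|\psi_t(T+V)-\psi_t(T)\|\lesssim t$ for $t<1$, and $O(1/t)$ pointwise decay of $\psi_t(T+V)$ from the closed-range hypothesis for $t>1$.

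One caution on your justification of the tail bound. You assert that $[I+t^{-2}(T+V)^{-2}]^{-1}$ is uniformly bounded ``by bisectoriality'' because its symbol lies in $H^\infty(S_\nu^o)$; but bisectoriality alone does not give bounded $H^\infty$ functional calculus---that is precisely what the quadratic estimates would yield, so this reasoning is circular as written. The claim is nonetheless true: on $\Ran(T+V)$ one has the algebraic identity $[I+t^{-2}(T+V)^{-2}]^{-1}=I-(I+t^2(T+V)^2)^{-1}$, and the latter is a product of two resolvents, hence uniformly bounded by bisectoriality alone. The paper makes exactly this point, phrasing it as $\|\psi_t(T+V)u\|\lesssim t^{-1}\|t(T+V)\psi_t(T+V)u\|\lesssim t^{-1}\|u\|$, where the second inequality is the resolvent bound $\|t^2(T+V)^2(I+t^2(T+V)^2)^{-1}\|\lesssim 1$.
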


\begin{proof}
  We have the estimate 
$$
  \|(I+it (T+V))^{-1}- (I+itT)^{-1} \| 
  = \| (I+it (T+V))^{-1}(tV)(I+itT)^{-1}  \|\lesssim |t|
$$
between resolvents, for $t\in \R$.
Subtraction by the corresponding estimate with $t$ replaced by $-t$,
shows
$$
\int_0^1 \|\psi_t(T+V)u\|^2 \frac{dt}{t} \lesssim \int_0^1 (\|\psi_t(T)u\|^2 + t^2\|u\|^2)\frac{dt}{t}\lesssim \|u\|^2,\qquad u\in \mH.
$$
Next we note that 
$\psi_t(T+V)u = t(T+V)\big((I+t^2(T+V)^2)^{-1}u\big) \in \Ran(T+V)$, which by bisectoriality is
a subspace complementary to $\Nul(T+V)$. Assuming that $\Ran(T+V)$ is closed, it follows from
the Open Mapping Theorem that
$$
  \|\psi_t(T+V)u\| 
 \lesssim
 t^{-1}\|t (T+V)\psi_t(T+V)u\|
\lesssim t^{-1} \|u\|,\qquad u\in \mH.
$$
This proves the estimate $\int_1^\infty \|\psi_t(T+V)u\|^2 \frac{dt}{t} \lesssim \|u\|^2$ for $u\in\mH$.
\end{proof}

\section{Proof of quadratic estimates}\label{sect:loc}

We prove Theorem~\ref{thm:main} in this section.
The proof is split into subsections and lemmas to increase readability.
For the whole proof, we fix 
$C_{gbg}=4$ and $\beta=1/2$ and let $r_H>0$ be the harmonic radius
of $M$, for these constants, as in Section~\ref{sec:covering}.   These choices for $C_{gbg}$ and $\beta$ are allowed, and determine the value of $r_H$, as per the discussion preceding \eqref{eq:hr0}. Moreover, since we will not require the maximal harmonic radius $r_H$, we can and will assume without loss of generality that $r_H<R$.  
Fix $0<T<r_H/64$ and denote balls covering $M$, as in Section~\ref{sec:covering}, by 
$$
  B_i= B(x_i,T)\subset  M,
$$
and write $B_i^*= B(x_i,r_H)\subset  M$ for the enlarged balls on which
we have charts with the estimate \eqref{eq:hr0}.
We write $\Omega\subset \R^n$ and $\Omega^*\subset\R^n$ for Euclidean balls centered at $0$
with radii  $16T$ and $r_H/2$ respectively.
Since $C_{gbg}=4$, length dilation is at most $2$ and we have $\varphi_i(4B_i)\subset \tfrac 12\Omega$
and $\varphi_i^{-1}(\Omega^*)\subset B_i^*$ for the
harmonic coordinate charts $\varphi_i$, where we
without loss of generality assume that $\varphi_i(x_i)=0$.

Besides the standing assumption that $T<r_H/64$, 
ensuring that $2\Omega\subset \Omega^*$, we
require that $T>0$ satisfies 
\eqref{eq:TreqCB} and \eqref{eq:Treq2} below.

\subsection{Localization on $M$} 

Write $\psi_t(\lambda)= t\lambda/(1+t^2\lambda^2)$. 
As in Lemma~\ref{lem:bddpert} it suffices to prove that 
\begin{equation}\label{eq:mainred}
\int_0^T \|\psi_t(DB) u\|_{L^2(\mathcal{V},\omega)}^2 \frac{dt}{t} \lesssim \|u\|_{L^2(\mathcal{V},\omega)}^2,
\end{equation}
since $\ran(D)$ is closed.
Moreover we may assume $u\in\ran(D)$, using the splitting \eqref{eq:dbsplit}.
We write
\begin{align*}
&\int_0^T \|\psi_t(DB)u\|_{L^2(\mathcal{V},\omega)}^2 \frac{dt}{t} \\
&\lesssim  \int_0^T \sum_{i\in\N} \left(\|\ca_{B_i} \psi_t(DB) \ca_{4B_i} u\|_{L^2(\mathcal{V},\omega)}^2 + \|\ca_{B_i} \psi_t(DB) \ca_{M\setminus 4B_i} u\|_{L^2(\mathcal{V},\omega)}^2\right) \frac{dt}{t}\\
&\leq \sum_{i\in\N} \int_0^T \|\ca_{4B_i} \psi_t(DB) \ca_{4B_i} u\|_{L^2(\mathcal{V},\omega)}^2 \frac{dt}{t} + \int_0^T \sum_{i\in\N} \|\ca_{B_i} \psi_t(DB) \ca_{M\setminus 4B_i} u\|_{L^2(\mathcal{V},\omega)}^2 \frac{dt}{t} \\
&= I_1 + I_2.
\end{align*}
 The main term $I_1$ is estimated in the following sections.
 In the present section we estimate the error term $I_2$, using the off-diagonal decay in Lemma~\ref{lem:ODest} to offset volume growth on the manifold.
We write 
$$
\langle \alpha \rangle = \max \{\alpha, 1\},\qquad \alpha>0.
$$ 

\begin{Lem}\label{lem:ODest} 
There exists a constant $C_B>0$, depending only on $\kappa_B$ and $\|B\|_\infty$, such that for each $N\geq0$ it holds that
\[
\|\ca_E \psi_t(DB)\ca_F\|_{\mathcal{L}(L^2(\mathcal{V},\omega))} \lesssim \left\langle\frac{d(E,F)}t\right\rangle^{-N} \exp\left(-C_B \frac{d(E,F)}{t}\right),
\]
for all $t>0$ and all measurable sets $E,F\subseteq M$, where the implicit constant depends on $\kappa_B$, $\|B\|_\infty$ and $N$.
\end{Lem}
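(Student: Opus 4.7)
The plan is to reduce the off-diagonal bound for $\psi_t(DB)$ to exponential off-diagonal decay for the resolvents $(I\pm itDB)^{-1}$, via the identity
$$
\psi_t(DB)=\tfrac{1}{2i}\bigl((I-itDB)^{-1}-(I+itDB)^{-1}\bigr),
$$
and then establish the resolvent decay by the classical conjugation argument with an exponentially twisted Lipschitz weight, in the spirit of \cite{AKMc,AAMc2}. The polynomial prefactor is then obtained for free from a slightly better exponential rate.

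I would introduce the bounded Lipschitz cutoff $\eta(x)=\min\{\rho(x,F),\rho(E,F)\}$, so that $\eta|_F=0$, $\eta|_E=\rho(E,F)$, $0\le\eta\le\rho(E,F)$, and $\|\nabla\eta\|_\infty\le 1$. Multiplication by $\eta$ preserves $\Dom(D)$ by the product rule for weak derivatives, combined with Lemma~\ref{lem:ddeladj}. A direct computation gives $[\nabla_M,\eta I]u=(\nabla\eta)u$; using $\div_{M,\omega}=\omega^{-1}\div_M\omega$ together with the unweighted product rule, one also obtains $[\div_{M,\omega},\eta I]v=(\nabla\eta)\cdot v$. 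Hence $[D,\eta I]$ is bounded multiplication of norm at most $\|\nabla\eta\|_\infty\le 1$, independently of $\omega$ since $\eta$ commutes with $\omega$.

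Since $[D,\eta I]$ is a multiplication operator, it commutes with $\eta$, and the chain rule for derivations gives $e^{\xi\eta/t}De^{-\xi\eta/t}=D-(\xi/t)[D,\eta I]$ for every $\xi\in\R$; since $B$ and $e^{\pm\xi\eta/t}$ both act by multiplication they commute, and therefore
$$
e^{\xi\eta/t}(I+itDB)e^{-\xi\eta/t}=(I+itDB)-i\xi[D,\eta I]B,
$$
a bounded perturbation of $I+itDB$ whose norm is at most $|\xi|\|B\|_\infty$, independently of $t$. By \cite[Prop.~3.3]{AAMc2}, $DB$ is bisectorial in $L^2(\mathcal V,\omega)$ with angle strictly less than $\pi/2$ depending only on $\kappa_B$ and $\|B\|_\infty$, so $\|(I+itDB)^{-1}\|\lesssim 1$ uniformly in $t\in\R\setminus\{0\}$. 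For $\xi>0$ sufficiently small, depending only on $\kappa_B$ and $\|B\|_\infty$, a Neumann series argument yields $\|e^{\xi\eta/t}(I+itDB)^{-1}e^{-\xi\eta/t}\|\lesssim 1$. Sandwiching between $\ca_E$ and $\ca_F$ and using $\ca_Ee^{-\xi\eta/t}=e^{-\xi\rho(E,F)/t}\ca_E$ and $e^{\xi\eta/t}\ca_F=\ca_F$, I obtain
$$
\|\ca_E(I+itDB)^{-1}\ca_F\|\lesssim e^{-\xi\rho(E,F)/t},
$$
and the same reasoning applied to $(I-itDB)^{-1}$ gives the analogous bound. Combining both through the difference formula delivers $\|\ca_E\psi_t(DB)\ca_F\|\lesssim e^{-\xi\rho(E,F)/t}$.

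Finally, setting $C_B:=\xi/2$, the elementary inequality $e^{-\xi\alpha}\lesssim_N\langle\alpha\rangle^{-N}e^{-C_B\alpha}$ for $\alpha\ge 0$ (with constant depending on $N$) promotes the pure exponential decay to the precise form required, with $C_B$ depending only on $\kappa_B$ and $\|B\|_\infty$. I do not anticipate a major obstacle in this argument; the step requiring most care is the commutator identity for the weighted divergence, but the factor $\omega$ causes no actual difficulty since $\eta$ is a scalar function commuting with it, and the remainder is a standard resolvent off-diagonal estimate.
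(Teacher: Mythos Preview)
Your argument is correct and is essentially the same approach that the paper invokes via its references: the paper cites \cite[Prop.~5.2]{MM}, which combines \cite[Lem.~5.3]{CMcM} with \cite[Prop.~5.2]{AKMc}, and the content of those results is precisely the exponential conjugation argument you have written out, with the key input being the pointwise commutator bound $|[\eta,DB]u|\le |\nabla_M\eta|\,\|B\|_\infty|u|$. You have simply unpacked the cited argument in detail (including the reduction $\psi_t(DB)=\tfrac{1}{2i}((I-itDB)^{-1}-(I+itDB)^{-1})$ and the final splitting of the exponential to extract the polynomial factor), so there is no substantive difference in method.
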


\begin{proof}
This follows as in \cite[Prop.~5.2]{MM}, which in turn combines \cite[Lem.~5.3]{CMcM} with \cite[Prop.~5.2]{AKMc}, since $DB$ is a bisectorial operator in $L^2(\mathcal{V},\omega)$ and the commutator $[\eta,DB]=\eta DB - DB\eta$ satisfies the pointwise identity
\[
|[\eta,DB]u(x)|_{\mathcal{V}_x} = |[\eta,D] (Bu) (x)|_{\mathcal{V}_x}  \leq |\grad_M\eta(x)|_{T_xM} \|B\|_\infty |u(x)|_{\mathcal{V}_x},
\]
for all $\eta\in C^\infty_c(M)$, $u\in\Dom(DB)$ and $x\in M$.
\end{proof}

The following lemma can be deduced from the proof of \cite[Lem.~4.4]{MM}, but the short proof is included for convenience.

\begin{Lem}\label{lem:cubesup}
Let $\lambda= \sqrt{(n-1)K}$.
If $N>n$ and $m>\lambda T$, then
\begin{equation}\label{eq:cubes0}
\sup_{i\in\N} \sum_{j\in\N}  \frac{\mu(B_j)}{\mu(B_i)} \left\langle\frac{t}{d(B_i,B_j)}\right\rangle^{N} e^{-m {d(B_i,B_j)}/{t}} \lesssim 1, \qquad  t\in(0,T],
\end{equation}
where the implicit constant depends on $n, \lambda, N,m$ and $T$. 
If in addition $\alpha >0$, then
\begin{equation}\label{eq:cubes1}
\sup_{i\in\N} \sum_{\substack{j\in\N:\\ d(B_i,B_j)\geq\alpha}}  \frac{\mu(B_j)}{\mu(B_i)} \left\langle\frac{t}{d(B_i,B_j)}\right\rangle^{N} e^{-m {d(B_i,B_j)}/{t}} \lesssim t^N, \qquad  t\in(0,T],
\end{equation}
where the implicit constant depends on $n, \lambda, N,m,T$ and $\alpha$.
\end{Lem}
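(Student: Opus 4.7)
The plan is to partition the index set by distance from $x_i$ and exploit the exponential volume growth \eqref{eq:expgrowth} together with the disjointness of the balls $(B(x_j,T/2))_j$. For each $k\geq 0$ set
\[
A_k = \{j\in\N : kT \leq d(x_i,x_j) < (k+1)T\}.
\]
Since $\bigcup_{j\in A_k} B(x_j,T/2) \subset B(x_i,(k+2)T)$ with the union disjoint, combining the lower bound $\mu(B(x_j,T/2)) \geq 2^{-\kappa} e^{-\lambda T} \mu(B_j)$ from \eqref{eq:expgrowth} (applied at $x_j$ with $\alpha=2$, $r=T/2$) with the upper bound $V(x_i,(k+2)T) \leq (k+2)^\kappa e^{\lambda(k+2)T} \mu(B_i)$ from \eqref{eq:expgrowth} (applied at $x_i$ with $\alpha=k+2$, $r=T$) yields
\[
\sum_{j\in A_k} \frac{\mu(B_j)}{\mu(B_i)} \lesssim (k+2)^\kappa e^{\lambda(k+2)T},
\]
uniformly in $i\in\N$, with implicit constant depending only on $\kappa,\lambda,T$.

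For $j\in A_k$ with $k\geq 3$, the triangle inequality yields $d(B_i,B_j) \geq (k-2)T \geq T \geq t$, so $\langle t/d(B_i,B_j)\rangle^N = 1$ and $e^{-m d(B_i,B_j)/t} \leq e^{-m(k-2)T/t} \leq e^{-m(k-2)}$, where the last step uses $t\leq T$. Combining with the shell bound,
\[
\sum_{j\in A_k} \frac{\mu(B_j)}{\mu(B_i)} \Big\langle \frac{t}{d(B_i,B_j)}\Big\rangle^N e^{-m d(B_i,B_j)/t} \lesssim (k+2)^\kappa e^{-k(m-\lambda T) + 2(m+\lambda T)},
\]
which decays geometrically in $k$ precisely under the hypothesis $m > \lambda T$. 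The finitely many terms with $k\leq 2$ contribute a uniformly bounded number of summands by the finite multiplicity of the covering, each with $\mu(B_j)/\mu(B_i) \lesssim 1$. Summing over $k$ proves \eqref{eq:cubes0}.

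For \eqref{eq:cubes1}, fix $\epsilon \in (0, m - \lambda T)$ and use the elementary bound $x^N e^{-x} \lesssim_N 1$ with $x = \epsilon d(B_i,B_j)/t$ to get
\[
e^{-m d(B_i,B_j)/t} \lesssim_{N,\epsilon} \Big(\frac{t}{d(B_i,B_j)}\Big)^N e^{-(m-\epsilon) d(B_i,B_j)/t} \leq \Big(\frac{t}{\alpha}\Big)^N e^{-(m-\epsilon) d(B_i,B_j)/t},
\]
where the second inequality uses $d(B_i,B_j) \geq \alpha$. Substituting and invoking \eqref{eq:cubes0} with $m$ replaced by $m-\epsilon$ (still greater than $\lambda T$) extracts the factor $t^N$.

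The main obstacle is arranging the argument to match the sharp hypothesis $m > \lambda T$. A naive dyadic splitting $2^{k-1}T \leq d(x_i,x_j) < 2^k T$ would produce a volume factor of order $e^{\lambda\cdot 2^k T}$ on the $k$th shell, which is overcome by $e^{-m\cdot 2^{k-1}T/t}$ only when $m > 2\lambda T$. Using linear shells of thickness $T$ keeps the volume-growth rate and the Davies--Gaffney decay rate additively aligned in $k$, so the coefficient of $k$ in the combined exponent is exactly $\lambda T - m$, which recovers the sharp condition.
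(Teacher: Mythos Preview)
Your proof is correct and follows the same overall strategy as the paper: partition the index set into distance shells, control shell volumes via the disjointness of the half-balls $B(x_j,T/2)$ together with the exponential growth bound \eqref{eq:expgrowth}, and then balance the volume growth against the exponential decay. The differences are in implementation. The paper groups balls by $d(B_i,B_j)$ into \emph{geometric} annuli $\{B_j:\alpha\sigma^{k-1}\le d(B_i,B_j)<\alpha\sigma^k\}$ with ratio $\sigma=m/(\lambda T)$ chosen so that the shell volume $\sigma^{k\kappa}e^{\lambda\alpha\sigma^k}$ and the decay $e^{-m\alpha\sigma^{k-1}/t}$ combine to leave only the polynomial factor $\sigma^{-k(N-\kappa)}$; you instead use \emph{arithmetic} shells of width $T$ based on centre distance, which makes the two exponential rates additive in $k$ with net coefficient $\lambda T-m$. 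Your closing remark explains correctly why dyadic shells would lose the sharp threshold; the paper's tuned $\sigma$ is the geometric-shell analogue of your observation. For \eqref{eq:cubes1} the paper proves both estimates in one pass and simply drops the $k=0$ annulus to extract the $t^N$ factor, whereas you bootstrap: sacrifice an $\epsilon$-margin in the exponential via $x^Ne^{-x}\lesssim 1$ to manufacture $(t/\alpha)^N$, then invoke the already-established \eqref{eq:cubes0} with $m-\epsilon>\lambda T$. This modular reduction is a clean alternative and avoids rerunning the shell argument.
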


\begin{proof}
Let $T>0$, $N>n$, $m>\lambda T$ and $\alpha>0$. Set $\sigma=m/\lambda T >1$ and for each $i\in\N$, isolate the balls intersecting concentric annuli around $B_i$ by defining
\[
\mathcal{A}_k(B_i) =
\begin{cases}
\{B_j : d(B_i,B_j) < \alpha\},  &{\rm if}\quad k=0, \\
\{B_j : \alpha\sigma^{k-1}\leq d(B_i,B_j) < \alpha\sigma^k\},  &{\rm if}\quad k\in\N.
\end{cases}
\]
A straightforward calculation shows that
\[
{\textstyle\bigcup_{B_j\in\mathcal{A}_k(B_i)}} B_j \subseteq B(x_i,3T+\alpha\sigma^k), \qquad  k\in\N_0,\  i\in\N.
\]
It follows from~\eqref{eq:expgrowth} and the 
disjointness of the balls $((1/2)B_i)_{i\in\N}$ that
\[
\sum_{B_j\in\mathcal{A}_k(B_i)} \mu(B_j)
\lesssim e^{\lambda T} \mu\left({\textstyle\bigcup_{B_j\in\mathcal{A}_k(B_i)}} (1/2) B_j \right) \lesssim \sigma^{kn}e^{\lambda \alpha\sigma^k} \mu(B_i), \qquad  k\in\N_0, i\in\N.
\]
For all $t\in(0,T]$, since $m- \sigma \lambda t \geq m - \sigma \lambda T=0$, we then have
\begin{align*}
\sup_{i\in\N} &\sum_{j\in\N}  \frac{\mu(B_j)}{\mu(B_i)} \left\langle\frac{t}{d(B_i,B_j)}\right\rangle^{N} e^{-m {d(B_i,B_j)}/{t}}
\\
&=\sup_{i\in\N} \sum_{k=0}^\infty \sum_{B_j\in\mathcal{A}_k(B_i)}  \frac{\mu(B_j)}{\mu(B_i)} \left\langle\frac{t}{d(B_i,B_j)}\right\rangle^{N} e^{-m {d(B_i,B_j)}/{t}} \\
&\leq \sup_{i\in\N} \frac{1}{\mu(B_i)} \left(\sum_{B_j\in\mathcal{A}_0(B_i)} \mu(B_j) +
\sum_{k=1}^\infty (\alpha\sigma^{k-1}/t)^{-N}e^{-m\alpha\sigma^{k-1}/t} \sum_{B_j\in\mathcal{A}_k(B_i)} \mu(B_j) \right)
\\
&\lesssim 1 + t^N \sum_{k=1}^\infty \sigma^{-k(N-n)}e^{-(m-\sigma\lambda t)\alpha\sigma^{k-1}/t}
\\
&\lesssim 1 + t^N,
\end{align*}
which proves~\eqref{eq:cubes0}, and we obtain~\eqref{eq:cubes1} by excluding the term $k=0$ in the sum.
\end{proof}

To estimate $I_2$,   on recalling that $\lambda= \sqrt{(n-1)K}$,   we require that
\begin{equation} \label{eq:TreqCB}
  T< C_B/\lambda,
\end{equation}
where $C_B>0$ is the constant from Lemma~\ref{lem:ODest}. 
  In particular, choosing $N>n$ and setting $m=C_B$, we can then combine the off-diagonal estimate in Lemma~\ref{lem:ODest} with the geometric control afforded by Lemma~\ref{lem:cubesup} to obtain  
\begin{align*}
&\sum_{i\in\N} \|\ca_{B_i} \psi_t(DB) \ca_{M\setminus 4B_i} u\|_{L^2(\mathcal{V},\omega)}^2 \\
&\leq \sum_{i\in\N} \Bigg(\sum_{\substack{j\in \N:\\d(B_i,B_j)\geq T}} \Bigg(\frac {\mu(B_j)}{\mu(B_i)}\frac{\mu(B_i)}{\mu(B_j)}\Bigg)^{1/2} \|\ca_{B_i} \psi_t(DB) \ca_{B_j} u\|_{L^2(\mathcal{V},\omega)}\Bigg)^2\\
&\leq \sum_{i\in\N} \Bigg(\sum_{\substack{j\in \N:\\d(B_i,B_j)\geq T}} \frac{\mu(B_j)}{\mu(B_i)}\|\ca_{B_i} \psi_t(DB) \ca_{B_j}\|\Bigg) \\
&\qquad\quad \times
\Bigg(\sum_{j\in\N} \frac{\mu(B_i)}{\mu(B_j)} \|\ca_{B_i} \psi_t(DB) \ca_{B_j}\| \|\ca_{B_j} u\|_{L^2(\mathcal{V},\omega)}^2\Bigg)\\
&\lesssim \Bigg(\sup_{i\in\N} \sum_{\substack{j\in \N:\\d(B_i,B_j)\geq T}} \frac{\mu(B_j)}{\mu(B_i)} \left\langle\frac{t}{d(B_i,B_j)}\right\rangle^{N} e^{-C_B {d(B_i,B_j)}/{t}} \Bigg)\\
&\qquad\quad \times
\Bigg(\sup_{j\in\N} \sum_{i\in\N} \frac{\mu(B_i)}{\mu(B_j)} \left\langle\frac{t}{d(B_i,B_j)}\right\rangle^{N} e^{-C_B {d(B_i,B_j)}/{t}}\Bigg) \Bigg(\sum_{j\in\N} \|\ca_{B_j} u\|_{L^2(\mathcal{V},\omega)}^2\Bigg)\\
&\lesssim t^N \|u\|_{L^2(\mathcal{V},\omega)}^2
\end{align*}
for all $t\in(0,T]$. This proves that $I_2 \lesssim \|u\|_{L^2(\mathcal{V},\omega)}^2$ for all $u \in L^2(\mathcal{V},\omega)$.

\subsection{Pullback from $M$ to $\R^n$}\label{sect:LQE}

To prove \eqref{eq:mainred}, and thus Theorem~\ref{thm:main}, it remains to prove that 
\begin{equation}\label{eq:mainred2}
I_1 = \sum_{i\in\N} \int_0^T \|\ca_{4B_i} \psi_t(DB) \ca_{4B_i} u\|_{L^2(\mathcal{V},\omega)}^2 \frac{dt}{t}
\lesssim \|u\|_{L^2(\mathcal{V},\omega)}^2,
\qquad u \in L^2(\mathcal{V},\omega).
\end{equation}
  We estimate \eqref{eq:mainred2} term-wise, fix $i$ and
use the
harmonic coordinate chart $\varphi_i$ for the vector bundle $\mathcal{V}$ from \eqref{eq:Vdefn}. 
Suppressing $i$ in notation, we write $\rho= \varphi_i^{-1}$.
Define the pulled back weight $\omega_\rho= \omega \circ \rho$ in 
$\Omega^*$.
We use $\rho$ to locally intertwine the operator $DB$
from Section~\ref{sec:weightsops}
in $L^2(\rho(2\Omega), \mV, \omega)$
and an operator $D_\rho B_\rho$
in $L^2(2\Omega, \C^{2+n}, \omega_\rho)$,
using Lemma~\ref{lem:pullpush}.
Define a pushforward operator $T_\rho: L^2(2\Omega,\C^{2+n},\omega_\rho) \rightarrow L^2(\rho(2\Omega),\mathcal{V},\omega)$ by
\[
 T_\rho \begin{bmatrix} u_\no \\ u_0 \\ u_\ta \end{bmatrix}(\rho(x))
= \begin{bmatrix} u_\no(x) \\  u_0(x)/J_\rho(x) \\ (d\rho)_{x} u_\ta(x)/J_\rho(x)\end{bmatrix}, \qquad x\in 2\Omega,
\]
for $u_\no,u_0\in L^2(2\Omega,\C,\omega_\rho)$ and $u_\ta\in L^2(2\Omega,\C^{n},\omega_\rho)$. 
The weighted $L^2$ adjoint of $T_\rho$ is 
$T^*_\rho:L^2(\rho(2\Omega),\mathcal{V},\omega) \rightarrow L^2(2\Omega,\C^{n+1},\omega_\rho)$ given by
\[
  T^*_\rho \begin{bmatrix} v_\no \\ v_0 \\ v_\ta \end{bmatrix}(x)
= \begin{bmatrix} J_\rho(x) v_\no(\rho(x)) \\ v_0(\rho(x)) \\ (d\rho_x)^* v_\ta(\rho(x)) \end{bmatrix}, \qquad  x\in 2\Omega,
\]
for $v_\no,v_0\in L^2(\rho(2\Omega),\C,\omega)$ and $v_\ta\in L^2(\rho(2\Omega),TM,\omega)$.

The Euclidean version of differential operator \eqref{eq:Ddefn}
is 
\[
D_\rho=\begin{bmatrix} 0& I& -\div_{\omega_\rho} \\ I & 0 & 0 \\ \grad & 0 & 0\end{bmatrix},
\]
where $\div_{\omega_\rho}= \omega_\rho^{-1} \div\omega_\rho$ 
(for now acting on functions defined on $2\Omega$)
and $\nabla$ and $\div$
are the weak gradient and divergence in $\R^n$.
In Section~\ref{sec:extension}, we extend $\omega_\rho$ and
define $D_\rho$ as a self-adjoint operator on $\R^n$.

\begin{Lem}   \label{lem:locintertw}
Let $B$ be a bounded and accretive multiplication operator 
on $L^2(M,\omega)$ as in Section~\ref{sec:weightsops}.
Define the bounded multiplication operator $B_\rho=T_\rho^{-1} B{T_\rho^*}^{-1}$
on $L^2(2\Omega,\C^{2+n},\omega_\rho)$.
Then
\begin{equation}\label{eq:DBMDBR}
T_\rho^* DB  v = D_\rho B_\rho T_\rho^* v\qquad \text{on } 2\Omega,
\end{equation}
for all $v \in \Dom(DB)$.
\end{Lem}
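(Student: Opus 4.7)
The plan is to expand the identity componentwise and reduce it to the chain rule and Piola transform identities in Lemma~\ref{lem:pullpush}. First I would observe that since $B_\rho T_\rho^* = T_\rho^{-1} B$ by construction, and since $v \in \Dom(DB)$ means that $Bv \in \Dom(D)$, the claimed identity $T_\rho^* DB\, v = D_\rho B_\rho T_\rho^* v$ is equivalent to
\[
   T_\rho^* D w = D_\rho T_\rho^{-1} w \qquad \text{on } 2\Omega,
\]
for $w = Bv$. It thus suffices to verify this intertwining of $D$ and $D_\rho$ for arbitrary $w = (w_\no,w_0,w_\ta) \in \Dom(D)$, restricted to $\rho(2\Omega)$.

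Next I would write out both sides explicitly in terms of the three components. Using the formulas for $T_\rho^*$ and $T_\rho^{-1}$ and the matrix form of $D$ and $D_\rho$, one finds
\[
  T_\rho^* D w = \begin{bmatrix} J_\rho\big(w_0 - \div_{M,\omega} w_\ta\big)\circ\rho \\ w_\no\circ\rho \\ (d\rho)^*(\nabla_M w_\no)\circ\rho \end{bmatrix},
  \qquad
  D_\rho T_\rho^{-1} w = \begin{bmatrix} J_\rho(w_0\circ\rho) - \div_{\omega_\rho}\bigl(J_\rho(d\rho)^{-1}(w_\ta\circ\rho)\bigr) \\ w_\no\circ\rho \\ \nabla(w_\no\circ\rho) \end{bmatrix}.
\]
The middle components agree trivially. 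The third component requires $(d\rho)^*(\nabla_M w_\no)\circ\rho = \nabla(w_\no\circ\rho)$, which is exactly the chain rule for weak gradients from Lemma~\ref{lem:pullpush}(i) applied to the bi-Lipschitz map $\rho$.

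The main work is the first component, where I must prove
\[
   J_\rho\,(\div_{M,\omega} w_\ta)\circ\rho = \div_{\omega_\rho}\bigl(J_\rho(d\rho)^{-1}(w_\ta\circ\rho)\bigr).
\]
Unwinding the weighted divergences $\div_{M,\omega}=\omega^{-1}\div_M\omega$ and $\div_{\omega_\rho}=\omega_\rho^{-1}\div\omega_\rho$, and using $\omega_\rho = \omega\circ\rho$, this reduces to
\[
   J_\rho\,\bigl(\div_M(\omega w_\ta)\bigr)\circ\rho = \div\bigl(\omega_\rho J_\rho (d\rho)^{-1}(w_\ta\circ\rho)\bigr).
\]
To obtain this, I would apply Lemma~\ref{lem:pullpush}(ii) to the vector field $u(x) = J_\rho(x)(d\rho_x)^{-1}(\omega w_\ta)(\rho(x))$ on $2\Omega$: by definition of the Piola transform, $\tilde\rho_* u = \omega w_\ta$ on $\rho(2\Omega)$, so the lemma gives $\div_M(\omega w_\ta) = J_\rho^{-1}(\div u)\circ\rho^{-1}$, which rearranges to the desired identity.

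The main (really only) obstacle is bookkeeping of the weight and Jacobian factors in the divergence component: one must keep straight which pullback convention ($T_\rho^*$ versus $T_\rho^{-1}$) assigns which Jacobian weight to which slot, so that the Piola identity from Lemma~\ref{lem:pullpush}(ii) produces exactly the combination $\omega_\rho J_\rho(d\rho)^{-1}(w_\ta\circ\rho)$ appearing under $\div$. The membership of $w_\ta\in\Dom(\div_{M,\omega})$ and $w_\no\in\Dom(\nabla_M)$, which is part of $w\in\Dom(D)$, ensures that the weak divergence and gradient on the $\R^n$ side genuinely exist in $L^2_\loc(2\Omega,\omega_\rho)$ by Lemma~\ref{lem:pullpush}, so the computation is valid in the sense of weak derivatives throughout.
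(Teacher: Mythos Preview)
Your proposal is correct and follows essentially the same approach as the paper: reduce to the identity $T_\rho^* D = D_\rho T_\rho^{-1}$ on $\rho(2\Omega)$, expand both sides componentwise, and invoke Lemma~\ref{lem:pullpush}(i) and (ii) for the gradient and divergence components respectively. You supply more detail than the paper in unwinding the weighted divergences and verifying the Piola identity, but the structure and key inputs are identical.
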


\begin{proof}
  We note that $T_\rho$ is an invertible multiplication operator.
  In particular $B_\rho$ is a bounded multiplication operator.
  To prove the local intertwining formula \eqref{eq:DBMDBR} , it 
  suffices to verify that $T^*_\rho D= D_\rho T_\rho^{-1}$ on $\rho(2\Omega)$.
  Acting on $\begin{bmatrix}v_\no \\ v_0 \\ v_\ta\end{bmatrix}$, this amounts to 
  $$
     \begin{bmatrix} J_\rho (v_0\circ \rho)-J_\rho(\div_{M,\omega} v_\ta)\circ \rho \\
      v_\no\circ \rho \\ d\rho^*(\nabla_M v_\no)\circ \rho \end{bmatrix}
      =
           \begin{bmatrix} J_\rho (v_0\circ \rho)-
            \div_{\rho,\omega_\rho}J_\rho d\rho^{-1} (v_\ta\circ \rho) \\
      v_\no\circ \rho \\ \nabla (v_\no\circ\rho) \end{bmatrix}.
  $$
  We conclude using Lemma~\ref{lem:pullpush}.
\end{proof}

\subsection{Extension of weights and coefficients} \label{sec:extension}

In this section, we extend the weight $\omega_\rho$ and 
the coefficients $B_\rho$ to $\R^n$, and add a lower order term $r\mathbb{P}_0$ to $B_\rho$,
to obtain a bisectorial operator $D_\rho B_{\rho,r}$ on $\R^n$ for 
which weighted quadratic estimates are proved by tweaking the 
proof from \cite{ARR} in Section~\ref{sec:finalbits}.
For the following extension argument for the weight $\omega_\rho$,
we require the more precise upper bound
\begin{equation}  \label{eq:Treq2}
  T<r_H/(64(1+2\sqrt n)),
\end{equation} 
  which will also suffice for our ultimate purpose of obtaining \eqref{eq:mainred} for some $T>0$.  

\begin{Lem}   \label{lem:extweight}
There exists a weight $\widetilde \omega_\rho\in A_2(\R^n)$ such that
$$
\widetilde \omega_\rho= \omega_\rho\qquad\text{on } 2\Omega,
$$
with $[\widetilde\omega_\rho]_{A_2(\R^n)}$ depending only
$[\omega]_{A_2^R(M)}$, $R$, $n$, $r_E$ and $K$.
\end{Lem}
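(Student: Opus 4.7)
The plan is to proceed in two stages: first transfer the local $A_2$ hypothesis from $M$ to the Euclidean setting on $\Omega^*$ via the harmonic chart, then use a Whitney-reflection construction to extend the weight from $2\Omega$ to all of $\R^n$. For the first stage, I would invoke the bi-Lipschitz bound $C_{gbg}=4$ from \eqref{eq:hr0}, which ensures that any Euclidean ball $B \subset \Omega^*$ of radius $r$ has image $\rho(B)$ sandwiched between geodesic balls of radii comparable to $r$, while the Jacobian $J_\rho$ is pointwise comparable to $1$. Picking $r_0 \le r_H/2$ comparable to $R$, this transfers the $A_2^R(M)$ estimate for $\omega$ into a uniform local $A_2$ bound for $\omega_\rho$ on Euclidean balls in $\Omega^*$ of radius at most $r_0$, with constant depending only on $[\omega]_{A_2^R(M)}$, $n$, $r_E$, $K$, and $R$.

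For the extension itself, the hypothesis $T<r_H/(64(1+2\sqrt n))$ forces the concentric ball of radius $32T(1+2\sqrt n)$ to still lie inside $\Omega^*$, giving ample room for a reflection across $\partial(2\Omega)$. I would take a Whitney decomposition $\{Q_k\}$ of $\R^n \setminus \overline{2\Omega}$ into cubes with $\ell(Q_k) \eqsim \operatorname{dist}(Q_k,\partial(2\Omega))$, and for each $Q_k$ pick a reflected companion $P_k \subset 2\Omega$ of comparable side length at comparable distance, exploiting the fact that near $\partial(2\Omega)$ the curvature of the sphere is negligible at the scale of individual Whitney cubes. I would then define
$$
\widetilde\omega_\rho(x) = \omega_\rho(x) \text{ on } 2\Omega, \qquad \widetilde\omega_\rho(x) = \barint_{P_k}\omega_\rho\,dy \text{ on } Q_k.
$$
Since the outer values are averages of a quantity already defined inside $2\Omega$, the buffer room secured above lets us realize all $P_k$ inside the domain of $\omega_\rho$.

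The remaining task is to verify that $\widetilde\omega_\rho \in A_2(\R^n)$ with the claimed uniform constant, and this is where the main obstacle lies. Given a Euclidean ball $B \subset \R^n$, three cases arise. If $B \subset 2\Omega$ with radius at most $r_0$, the local $A_2$ estimate applies directly. If $B$ is disjoint from $2\Omega$, then $\widetilde\omega_\rho$ varies between averages of $\omega_\rho$ over neighbouring reflected cubes, whose ratios are uniformly bounded thanks to the doubling property of $\omega_\rho$ inherited from the local $A_2$ estimate. The hard case is the mixed/large one: when $B$ straddles $\partial(2\Omega)$ or has radius exceeding $r_0$. Here I would compare $\barint_B \widetilde\omega_\rho$ and $\barint_B \widetilde\omega_\rho^{-1}$ with averages over a fixed reference sub-collection of cubes inside $2\Omega$ of comparable aggregate size, using that neighbouring Whitney cubes are paired with neighbouring reflected cubes so that the outer averages agree, up to bounded factors, with averages of $\omega_\rho$ over subcubes of $B \cap 2\Omega$. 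This is a variant of Chua's extension theorem for $A_p$ weights on uniform domains, and yields $\widetilde\omega_\rho \in A_2(\R^n)$ with $[\widetilde\omega_\rho]_{A_2(\R^n)}$ controlled by the listed parameters.
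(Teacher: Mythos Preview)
Your approach is viable but follows a genuinely different route from the paper. You construct the extension explicitly via Whitney reflection, defining $\widetilde\omega_\rho$ on exterior Whitney cubes as averages of $\omega_\rho$ over reflected interior cubes, and then verify the global $A_2$ condition by a case analysis, ultimately appealing to Chua's extension theorem for uniform domains to handle the mixed/large-ball case. The paper instead invokes the off-the-shelf extension criterion of Garc\'{\i}a-Cuerva and Rubio de Francia \cite[Thm.~IV.5.6]{GR}, which reduces the problem to checking
\[
\sup_Q \left(\frac{1}{|Q|}\int_{Q\cap 2\Omega}\omega_\rho^{1+\epsilon}\,dx\right)\left(\frac{1}{|Q|}\int_{Q\cap 2\Omega}\omega_\rho^{-1-\epsilon}\,dx\right) < \infty
\]
for some $\epsilon>0$. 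The decisive advantage here is the normalisation by $|Q|$ rather than $|Q\cap 2\Omega|$: one may simply enlarge the domain of integration from $Q\cap 2\Omega$ to $Q$ (increasing the integrals), and the constraint \eqref{eq:Treq2} is exactly what guarantees $Q\subset\Omega^*$ for the relevant cubes. A reverse H\"older estimate then reduces to $\epsilon=0$, where the change of variables back to $M$ gives the bound directly from $[\omega]_{A_2^R(M)}$. This bypasses your entire case analysis in a few lines. Your construction has the merit of being explicit, and your reading of \eqref{eq:Treq2} as providing buffer room is correct in spirit, but the paper uses that room to fit cubes rather than reflections; the resulting argument is considerably shorter. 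One minor inaccuracy: you write ``$r_0$ comparable to $R$'', but since $r_H<R$ is assumed, what you actually obtain (and all you need) is $r_0$ comparable to $r_H$, which dominates the diameter of $2\Omega$.
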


\begin{proof} 
We verify condition (ii) in \cite[Thm.~IV.5.6]{GR} for $\omega_\rho$ on
$2\Omega$, which guarantees that if $\omega_\rho^{1+\epsilon}\in A_2(2\Omega)$ for some $\epsilon>0$, then there exists $\widetilde \omega_\rho\in A_2(\R^n)$ as required by the lemma .
 It suffices to prove
$$
  \sup_{Q}\left(\frac 1{|Q|}\int_{Q\cap 2\Omega} \omega_\rho^{1+\epsilon} dx \right)
  \left( \frac 1{|Q|}\int_{Q\cap 2\Omega} \omega_\rho^{-1-\epsilon} dx\right)<\infty,
$$
with supremum taken over all cubes $Q\subset \R^n$,
for some $\epsilon>0$.
To this end, we first note that it suffices to consider $Q$ with
sidelength $\ell(Q)\le 2r$ and intersecting $2\Omega$, 
where $r= 32T$ is the radius of $2\Omega$.
(If $\ell(Q)>2r$, we may shrink $Q$ by moving the corner of $Q$ 
farthest away from $0$, without changing $Q\cap 2\Omega$.)
For such $Q$, we estimate by increasing the domains of integration
from $Q\cap 2\Omega$ to $Q$, since 
$Q\subset \Omega^*$ by \eqref{eq:Treq2}.
Having done this,
by reverse H\"older estimate and $A_p$
bootstrapping, see \cite[Thm.~9.2.2, 9.2.5]{G}, it suffices to 
prove 
the estimate for $\epsilon=0$.
We need to show a uniform bound on 
$$
\left(\frac 1{|Q|}\int_{Q} \omega_\rho dx \right)
  \left( \frac 1{|Q|}\int_{Q} \omega_\rho^{-1} dx\right).
$$ 
Changing variables to $B_i^*\subset M$, 
since $C_{gbg}=4$ and we assume $r_H<R$, the bound now
follows from $[\omega]_{A_2^R(M)}<\infty$.
\end{proof}

  Fix an $A_2(\R^n)$ weight $\widetilde \omega_\rho$ as in Lemma~\ref{lem:extweight}. This extension of $\omega_\rho$ from $\Omega$ to~ $\R^n$ can be constructed as in \cite[Thm.~IV.5.6]{GR} by factorising $\omega_\rho$ and applying maximal functions to the zero extensions of its components.  
Define the self-adjoint operator
\[
D_\rho=\begin{bmatrix} 0& I& -\div_{\widetilde\omega_\rho} \\ I & 0 & 0 \\ \grad & 0 & 0\end{bmatrix}
\]
in $L^2(\mathbb{R}^n,\C^{2+n},\widetilde\omega_\rho)$.
We next turn to the extension of $B_{\rho}$ to a bounded multiplication 
operator that is accretive on the range of $D_\rho$.
(A related construction is in \cite[App.~A]{AuscherBortzEgertSaari:19}.)
Define
\begin{equation}\label{eq:BrhoR}
B_{\rho,r} = B_\rho (\eta_0^2 I) + (1-\eta_0^2)I + r\mathbb{P}_0,
\end{equation}
where 
$$
\mathbb{P}_0 = \begin{bmatrix}
0 & 0 & 0 \\ 0 & I & 0 \\ 0 & 0 & 0 \end{bmatrix}.
$$
  The parameter $r$ will be fixed below to ensure accretivity of $B_{\rho,r}$ on $\ran(D_\rho)$, which is crucial, whilst $\eta_0:\R^n\rightarrow [0,1]$
denotes a fixed smooth cut-off function such that  
$$
 \sppt \eta_0 \subset 2\Omega \quad \text{ and } \quad \eta_0\equiv1 \text{ on } 
\Omega. 
$$
Recall that $\widetilde\omega_\rho=\omega_\rho$ on $2\Omega$.    The following lemma provides for our choice of $r$.

\begin{Lem}
There exists $r< \infty$, depending only on $n$, $\kappa_B$ and $\|B\|_\infty$, such that $B_{\rho,r}$ is accretive on $\Ran(D_\rho)$ in the sense of \eqref{eq:acc} with
\[
\kappa_{B_{\rho,r}} \geq \min\{\tfrac{1}{4} c^{-1}\kappa_B,1\}
> 0,
\]
where $c<\infty$ is a positive constant depending only on $n$.
\end{Lem} 

\begin{proof}
Let $u\in \Ran(D_\rho)$, so $u_\ta=\nabla u_0$, where $u_\no \in L^{2}(\mathbb{R}^n,\widetilde\omega_\rho)$ and $u_0\in W^{1,2}(\mathbb{R}^n,\widetilde\omega_\rho)$. Now we use $\nabla (\eta_0 u_{0}) = \eta_0\nabla u_{0} + (\nabla \eta_0) u_{0}$ to write
\begin{align*}
\langle B_\rho (\eta_0^2 u), u  \rangle 
&= \langle B_\rho 
\begin{bmatrix} \eta_0 u_\no \\ \eta_0 u_{0} \\ \eta_0\nabla u_{0} \end{bmatrix},
\begin{bmatrix} \eta_0 u_\no \\ \eta_0 u_{0} \\ \eta_0\nabla u_{0} \end{bmatrix}  \rangle \\
&= \langle B_\rho 
\begin{bmatrix} \eta_0 u_\no \\ \eta_0 u_{0} \\ \nabla (\eta_0 u_{0}) \end{bmatrix},
\begin{bmatrix} \eta_0 u_\no \\ \eta_0 u_{0} \\ \nabla (\eta_0 u_{0}) \end{bmatrix}  \rangle - E,
\end{align*}
where the error term is
\begin{align*}
E = \langle B_\rho 
\begin{bmatrix} 0 \\ 0 \\ (\nabla \eta_0) u_{0} \end{bmatrix}, \eta_0 u \rangle
 &+\langle B_\rho 
\eta_0 u,
\begin{bmatrix} 0 \\ 0 \\ (\nabla \eta_0) u_{0}\end{bmatrix} 
\rangle
+\langle B_\rho 
\begin{bmatrix} 0 \\ 0 \\ (\nabla \eta_0) u_{0}  \end{bmatrix},
\begin{bmatrix} 0 \\ 0 \\ (\nabla \eta_0) u_{0}  \end{bmatrix}
\rangle
\end{align*}
and we use the $L^2(\R^n,\C^{2+n},\widetilde\omega_\rho)$ inner-product.
Lemma~\ref{lem:pullpush} shows that
$$
{T_\rho^*}^{-1}\begin{bmatrix} \eta_0 u_\no \\ \eta_0 u_{0} \\ \nabla (\eta_0 u_{0}) \end{bmatrix} = \begin{bmatrix}J_\rho^{-1}(\eta_0 u_\no)\circ\rho^{-1} \\ (\eta_0 u_{0})\circ\rho^{-1} \\ \nabla_M ((\eta_0 u_{0})\circ\rho^{-1}) \end{bmatrix}
$$
belongs to $\Ran(D)$. The accretivity \eqref{eq:acc} of $B$ on $\Ran(D)$  then implies that  
\begin{align*}
&\re \langle B_\rho 
\begin{bmatrix} \eta_0 u_\no \\ \eta_0 u_{0} \\ \nabla (\eta_0 u_{0}) \end{bmatrix},
\begin{bmatrix} \eta_0 u_\no \\ \eta_0 u_{0} \\ \nabla (\eta_0 u_{0}) \end{bmatrix}  \rangle_{L^2(\Omega,\C^{2+n},\widetilde\omega_\rho)} \\
&= \re \langle B{T_\rho^*}^{-1}
\begin{bmatrix} \eta_0 u_\no \\ \eta_0 u_{0} \\ \nabla (\eta_0 u_{0}) \end{bmatrix},{T_\rho^*}^{-1}
\begin{bmatrix} \eta_0 u_\no \\ \eta_0 u_{0} \\ \nabla (\eta_0 u_{0}) \end{bmatrix}\rangle_{L^2(\mathcal{V},\omega)} \\
&\geq \kappa_B \|{T_\rho^*}^{-1}
\begin{bmatrix} \eta_0 u_\no \\ \eta_0 u_{0} \\ \nabla (\eta_0 u_{0}) \end{bmatrix}
\|_{L^2(\mathcal{V},\omega)}^2 
\geq c^{-1} \kappa_B
\|\begin{bmatrix} \eta_0 u_\no \\ \eta_0 u_{0} \\ \nabla (\eta_0 u_{0}) \end{bmatrix}\|_{L^2(\Omega,\C^{n+1},\widetilde\omega_\rho)}^2 \\
&\geq c^{-1} \kappa_B \left(
\int_{\mathbb{R}^n} (|\eta_0 u_\no|^2 + |\eta_0 u_{0}|^2) \widetilde\omega_\rho dx + 
\tfrac{1}{2} \int_{\mathbb{R}^n} |\eta_0 \nabla u_{0}|^2 \widetilde\omega_\rho dx - \int_{\mathbb{R}^n} |(\nabla \eta_0) u_{0}|^2 \widetilde\omega_\rho dx \right) \\
&\geq c^{-1} \kappa_B \left(
\tfrac{1}{2} \int_{\mathbb{R}^n}  \eta_0^2 |u|^2 \widetilde\omega_\rho dx - \int_{\mathbb{R}^n} |\nabla \eta_0|^2 |u_{0}|^2 \widetilde\omega_\rho dx \right),
\end{align*}  
for some positive constant $c<\infty$ only depending on $n$, due to the choice $C_{gbg}=4$.

Meanwhile, introducing a constant $\epsilon\in(0,1)$ to be chosen, and applying Cauchy's inequality with $\epsilon$, the remaining error term is controlled by
\begin{align*}
|E| &\leq c\|B\|_\infty \left(
2\int_{\mathbb{R}^n} |(\nabla \eta_0) u_{0}|  |\eta_0 u| \widetilde\omega_\rho dx
+ \int_{\mathbb{R}^n} |(\nabla \eta_0) u_{0}|^2 \widetilde\omega_\rho dx
\right) \\
&\leq c\|B\|_\infty \left(
2\epsilon
\int_{\mathbb{R}^n} \eta_0^2 |u|^2 \widetilde\omega_\rho dx
+\left(2\epsilon^{-1}+1\right)\int_{\mathbb{R}^n} |\nabla \eta_0|^2 |u_{0}|^2 \widetilde\omega_\rho dx
\right).
\end{align*}
Altogether, we have
\begin{align*}
&\re \langle B_{\rho,r} u, u \rangle= \re \langle B_\rho (\eta_0^2 u), u  \rangle+ \langle(1-\eta_0^2)u,u\rangle
+ r \|u_0\|^2
\\
&\geq c^{-1} \kappa_B \left(
\tfrac{1}{2} \int_{\mathbb{R}^n}  \eta_0^2 |u|^2 \widetilde\omega_\rho dx - \int_{\mathbb{R}^n} |\nabla \eta_0|^2 |u_{0}|^2 \widetilde\omega_\rho dx\right)
+\int_{\mathbb{R}^n} (1-\eta_0^2)|u|^2 \widetilde\omega_\rho dx
\\&\quad
- c\|B\|_\infty \left(
2\epsilon
\int_{\mathbb{R}^n} \eta_0^2 |u|^2 \widetilde\omega_\rho dx
+3\epsilon^{-1}\int_{\mathbb{R}^n} |\nabla \eta_0|^2 |u_{0}|^2 \widetilde\omega_\rho dx\right)
+ r \int_{\mathbb{R}^n} |u_0|^2 \widetilde\omega_\rho dx
\\
&= (\tfrac{1}{2} c^{-1}\kappa_B - c\|B\|_\infty 2\epsilon) \int_{\mathbb{R}^n}  \eta_0^2 |u|^2 \widetilde\omega_\rho dx
+\int_{\mathbb{R}^n} (1-\eta_0^2)|u|^2 \widetilde\omega_\rho dx
\\&\quad
-(c^{-1} \kappa_B + c\|B\|_\infty 3\epsilon^{-1}) \int_{\mathbb{R}^n} |\nabla \eta_0|^2 |u_{0}|^2 \widetilde\omega_\rho dx + r \int_{\mathbb{R}^n} |u_0|^2 \widetilde\omega_\rho dx.
\end{align*}
  We first choose $\epsilon>0$ sufficiently small, and then choose $r<\infty$ sufficiently large, depending only on $n$, $\kappa_B$ and $\|B\|_\infty$ such that
$$
\re \langle B_{\rho,r} u, u \rangle_{L^2(\R^n,\C^{2+n},\widetilde\omega_\rho)}
\geq 
\min\{\tfrac{1}{4} c^{-1}\kappa_B,1\} \|u\|_{L^2(\R^n,\C^{2+n},\widetilde\omega_\rho)}^2,
$$
so the result follows. 
\end{proof}

\subsection{Estimates on $\R^n$}   \label{sec:finalbits}

In this section, we complete the proof of Theorem~\ref{thm:main} 
by reducing it to, and proving, the corresponding weighted quadratic 
estimate for $D_\rho B_{\rho, r}$ in
$L^2(\mathbb{R}^n,\C^{n+1},\widetilde\omega_\rho)$.

\begin{Thm}\label{thm:mainRn}
We have quadratic estimates
\[
\int_0^\infty \|\psi_t(D_\rho B_{\rho,r}) u\|_{L^2(\mathbb{R}^n,\C^{2+n},\widetilde\omega_\rho)}^2 \frac{dt}{t} \lesssim \|u\|_{L^2(\mathbb{R}^n,\C^{2+n},\widetilde\omega_\rho)}^2, \qquad  u\in L^2(\mathbb{R}^n,\mathbb{C}^{2+n},\widetilde\omega_\rho),
\]
where the implicit constant depends only on $n$, $\|B_{\rho,r}\|_\infty$, $\kappa_{B_{\rho,r}}$ and 
$[\widetilde\omega_\rho]_{A_2(\mathbb{R}^n)}$. 
\end{Thm}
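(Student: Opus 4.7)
The plan is to apply the weighted quadratic estimate machinery of \cite{ARR} directly to $T := D_\rho B_{\rho,r}$ in $L^2(\R^n,\C^{2+n},\widetilde\omega_\rho)$. First I would verify the abstract hypotheses. The operator $D_\rho$ is self-adjoint, since $\grad$ and $-\div_{\widetilde\omega_\rho}$ are formal adjoints under the weighted pairing, and it has closed range because of the identity entries in positions $(1,2)$ and $(2,1)$. The multiplier $B_{\rho,r}$ is bounded and, by the coercivity computation at the end of Section~\ref{sec:extension}, satisfies $\re\langle B_{\rho,r}u,u\rangle\gtrsim\|u\|^2$ on all of $L^2(\R^n,\C^{2+n},\widetilde\omega_\rho)$; in particular it is invertible on $L^2$ and accretive on $\Ran(D_\rho)$. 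Consequently $T$ is bisectorial with closed range $\Ran(T)=\Ran(D_\rho)$ and admits the splitting \eqref{eq:dbsplit}. A standard duality argument reduces matters to the $\lesssim$ half of the quadratic estimate for $T$ (and for its adjoint $B_{\rho,r}^*D_\rho$, which is treated identically).

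Next, by Lemma~\ref{lem:bddpert} applied to $T$ with $V=0$, it suffices to establish the local estimate
\[
\int_0^1 \|\psi_t(T)u\|_{L^2(\R^n,\C^{2+n},\widetilde\omega_\rho)}^2\,\frac{dt}{t} \lesssim \|u\|_{L^2(\R^n,\C^{2+n},\widetilde\omega_\rho)}^2.
\]
I would then follow the principal-part decomposition of \cite{ARR}. Define $\gamma_t(x)$ as the pointwise action of $tT(I+t^2T^2)^{-1}$ on constant $\C^{2+n}$-valued functions (after a suitable truncation), and let $E_t$ denote a weighted dyadic averaging at scale $t$. Splitting
\[
\psi_t(T)u = \bigl(\psi_t(T)-\gamma_t E_t\bigr)u + \gamma_t E_t u,
\]
the first piece is controlled in the square-function norm by $\lesssim\|u\|$ using weighted off-diagonal decay for the resolvent of $T$ (the $\R^n$-analogue of Lemma~\ref{lem:ODest}) together with a weighted Poincar\'e inequality for $\widetilde\omega_\rho\in A_2(\R^n)$.

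The remaining term reduces Theorem~\ref{thm:mainRn} to the statement that $|\gamma_t(x)|^2\,\widetilde\omega_\rho(x)\,dx\,\tfrac{dt}{t}$ is a weighted Carleson measure on the upper half-space, with Carleson norm depending only on $n$, $\|B_{\rho,r}\|_\infty$, $\kappa_{B_{\rho,r}}$ and $[\widetilde\omega_\rho]_{A_2(\R^n)}$. This Carleson estimate is the heart of the argument and the main obstacle. It is dispatched by the double stopping-time argument of \cite[Sec.~3.5--3.7]{ARR}: for each dyadic cube $Q$ one constructs an $L^\infty$ test function $b_Q$ by applying a resolvent of $T$ to a normalized constant $\C^{2+n}$-vector truncated near $Q$; the accretivity $\kappa_{B_{\rho,r}}>0$ yields a uniform lower bound on a suitable weighted average of $\gamma_t\cdot b_Q$ over $Q$; and an $A_2$-weighted Christ-type stopping-time scheme, combined with a weighted John--Nirenberg inequality, converts this test-function bound into the Carleson bound. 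Since the triple $(D_\rho, B_{\rho,r}, \widetilde\omega_\rho)$ fits the abstract framework of \cite{ARR} verbatim, every constant in this procedure depends only on the parameters stated in the theorem, completing the proof.
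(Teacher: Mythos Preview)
Your overall strategy---run the weighted \cite{ARR} machinery on $D_\rho B_{\rho,r}$ in $L^2(\R^n,\C^{2+n},\widetilde\omega_\rho)$---is exactly what the paper does, and your reductions (bisectoriality, closed range, restriction to $\int_0^1$) are fine. But two points need attention.

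First, the two-term split $\psi_t(T)u=(\psi_t(T)-\gamma_t E_t)u+\gamma_t E_t u$ is too coarse. The paper (following \cite[Prop.~3.7]{ARR}) inserts the resolvent smoothing $P_t$ and uses the four-term decomposition
\[
Q_t^B u= Q_t^B(I-P_t)u+(Q_t^B-\gamma_t E_t)P_t u+\gamma_t E_t(P_t-I)u+\gamma_t E_t u.
\]
The off-diagonal/Poincar\'e argument you invoke only works on the smoothed input $P_t u$; without $P_t$ you have no $W^{1,2}$ control on $u$ at scale $t$ and the principal-part comparison fails. The first term is handled separately by writing $(I-P_t)u=tD_\rho(\cdots)$ on $\ran(D_\rho)$.

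Second, and more substantively, the assertion that $(D_\rho,B_{\rho,r},\widetilde\omega_\rho)$ fits \cite{ARR} \emph{verbatim} is not correct. In \cite{ARR} the operator acts on $\C^{1+n}$-valued functions; here $D_\rho$ acts on $\C^{2+n}$, the extra scalar slot $u_0$ coming from the zero-order identity blocks in \eqref{eq:Ddefn}. The paper spells out the required modifications: the definitions of $P_t$ and $E_t$ acquire a $u_0$ component, the identity expressing $(I-P_t)u$ as $tD_\rho(\cdots)$ on $\ran(D_\rho)$ changes (since now $u_\ta=\nabla u_0$ rather than $u_\ta=\nabla u_\no$), and \cite[Lem.~3.16]{ARR} needs two additional estimates, one for $\barint_{Q_1} u_0\,\widetilde\omega_\rho\,dx$ and one for $\barint_{Q_1} u_\no\,dx$, the latter using the $A_2$ condition to pass between weighted and unweighted averages. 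These adaptations are routine but they are precisely the new content of the proof, and your proposal omits them.
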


\begin{proof}
  The proof follows closely that presented in \cite[Sec.~3]{ARR},
  and we only point out the differences which arise due to the 
  presence of zero order identity blocks in $D_\rho$.
  Since $\ran(D_\rho)$ is closed, it suffices to bound the integral
  $\int_0^1$.
  As in the proof of \cite[Prop.~3.7]{ARR}, we approximate
  $Q_t^B= \psi_t(D_\rho B_{\rho,r})$ with a dyadic paraproduct
  $\gamma_t E_t$ by writing
$$
  Q_t^B u= Q_t^B(I-P_t)u+
  (Q_t^B-\gamma_t E_t)P_t u+ 
  \gamma_t E_t(P_t-I)u+ \gamma_t E_t u.
$$
By \cite[Prop.~3.1(i)]{ARR}, we may assume that
$u\in\ran(D_\rho)$.
Note that $\ran(D_\rho)$ is now a closed subspace, as compared
to \cite{ARR}.
As in \cite[Defn.~3.5]{ARR}, the principal part $\gamma_t$ of $Q_t^B$
is defined to be the multiplication operator 
$\gamma_t(x) z= (Q_t^B z)(x)$, $x\in\R^n$, where we view
$z\in\C^{2+n}$ as a constant function on $\R^n$.

The mollification operators $P_t$ and $E_t$, on scale $t$, are
$$
  P_tu= \begin{bmatrix}
    (I-t^2\Delta_{\widetilde \omega_\rho})^{-1}u_\no \\
    (I-t^2 \Delta)^{-1} u_0 \\
    (I-t^2 \Delta)^{-1} u_\ta
  \end{bmatrix},
$$ 
where $\Delta_{\widetilde \omega_\rho}=\div_{\widetilde \omega_\rho}
\nabla$, and with $\Delta=\div\nabla$ acting componentwise,
and
$$
  E_tu(x)= \begin{bmatrix}
   \barint_Q u_\no \widetilde \omega_\rho dx \\
   \barint_Q   u_0 dx  \\
   \barint_Q   u_\ta dx
  \end{bmatrix},
$$ 
where $Q$ denotes the dyadic cube on scale $t$ which contains 
$x$. See \cite[Sec.~2.2]{ARR}.
Note the new $u_0$ component as compared to
\cite{ARR}.

The first term $Q_t^B(I-P_t)u$, assuming $u\in\ran(D_\rho)$,
is estimated as in \cite[Lem.~3.10]{ARR}, now using the identity  
$$
  (I-P_t) u=
  tD_\rho
  \begin{bmatrix}
    t(-\Delta)^{1/2} (I-t^2 \Delta)^{-1} (-\Delta)^{1/2} u_0 \\
    0 \\
    t\nabla(I-t^2 \Delta_{\widetilde \omega_\rho})^{-1} u_\no
  \end{bmatrix},
  \qquad u=
  \begin{bmatrix}
    u_\no \\
    u_0 \\
    \nabla u_0
  \end{bmatrix}.
$$
The second term $(Q_t^B-\gamma_t E_t)P_t u$
is estimated as in \cite[Lem.~3.11]{ARR}, using polynomial off-diagonal
estimates for $D_\rho B_{\rho,r}$ on $\R^n$.
The third term $\gamma_t E_t(P_t-I)u$ is estimated as in 
\cite[Lem.~3.12-14]{ARR}, with the new $u_0$ component 
estimated in the same way as $u_\ta$.

The last term $\gamma_t E_t u$ is estimated using the double 
stopping time argument in \cite[Sec.~3.5-7]{ARR}, and this estimate
requires that $t<1$ (which we may assume since $\ran(D_\rho)$ is closed).
The only adaption needed in this proof is in 
\cite[Lem.~3.16]{ARR}, where now
$$
  E_{Q_1}(f_{Q_1}^\xi)-\xi= 
  \begin{bmatrix}
   \barint_Q (u_0-\div_{\widetilde\omega_\rho}u_\ta) \widetilde \omega_\rho dx \\
   \barint_Q   u_\no dx  \\
   \barint_Q   \nabla u_\no dx
  \end{bmatrix},
$$
using the notation of \cite[Lem.~3.16]{ARR}.
Besides \cite[Lem.~3.12-13]{ARR}, we require the following 
two estimates.
For $u_0$, we have 
\begin{multline*}
  \left| \barint_{Q_1} u_0 \widetilde \omega_\rho dx \right|
  \le  \left( \barint_{Q_1} |u_0|^2 \widetilde \omega_\rho dx \right)^{1/2}
 \\ \lesssim \sigma_3\ell(Q_1)
  \left( \int_{Q_1}\widetilde \omega_\rho dx\right)^{-1/2} 
   \left(\int_{\R^n} |1_{Q_1}\xi|^2 \widetilde \omega_\rho dx \right)^{1/2}
   \lesssim \sigma_3,
\end{multline*}
provided $\ell(Q_1)\lesssim 1$.
For $u_\no$, we have 
\begin{multline*}
  \left| \barint_{Q_1} u_\no dx \right|
  \le \frac 1{|Q_1|} \left( \int_{Q_1} |u_\no|^2 \widetilde \omega_\rho dx \right)^{1/2}
  \left( \int_{Q_1} \widetilde \omega_\rho^{-1} dx \right)^{1/2}
 \\ \lesssim \left( \barint_{Q_1} |u_\no|^2 \widetilde \omega_\rho dx \right)^{1/2}\lesssim \sigma_3,
\end{multline*}
using the $A_2$ condition on $\widetilde \omega_\rho$ and estimating
as for $u_0$ at the end.

Replacing $\C^{1+n}$ by $\C^{2+n}$, the rest of the argument from
 \cite[Sec.~3]{ARR}   goes through, and completes the proof.
\end{proof}

\begin{proof}[Proof of Theorem~\ref{thm:main}]
To compare $\psi_t(BD)$ and $\psi_t(D_\rho B_{\rho,r})$, we use 
a fixed smooth cut-off function $\eta:\R^n\rightarrow [0,1]$ such that
$$
 \sppt \eta \subset \Omega \quad \text{ and } \quad \eta\equiv1 \text{ on } 
\tfrac 12\Omega. 
$$
Denote the resolvents by
$R_t=(I+itDB)^{-1}$ and $R_t^{\rho,r}=(I+itD_\rho B_{\rho,r})^{-1}$.
Consider $u\in L^2(\mathcal{V},\omega)$ with $\sppt u \subseteq 4B_i$.
We have
\begin{align*}
R_t u &= R_t {T_\rho^*}^{-1} T_\rho^* ((\eta\circ\rho^{-1}) u)
= R_t {T_\rho^*}^{-1} \eta T_\rho^* u\\
&= [R_t {T_\rho^*}^{-1}\eta - {T_\rho^*}^{-1}\eta R_t^{\rho,r}]T_\rho^*u + {T_\rho^*}^{-1}\eta R_t^{\rho,r} T_\rho^*u\\
&= R_t {T_\rho^*}^{-1} [\eta (I+itD_\rho B_{\rho,r}) - {T_\rho^*}(I+itDB){T_\rho^*}^{-1}\eta] R_t^{\rho,r} T_\rho^*u + {T_\rho^*}^{-1}\eta R_t^{\rho,r} T_\rho^*u\\
&= R_t {T_\rho^*}^{-1} it(\eta D_\rho B_{\rho,r} - D_\rho B_\rho\eta) R_t^{\rho,r} T_\rho^*u + {T_\rho^*}^{-1}\eta R_t^{\rho,r} T_\rho^*u\\
&=R_t {T_\rho^*}^{-1}  it ([\eta,D_\rho] B_\rho \eta_0^2 + r\eta D_\rho \mathbb{P}_0) R_t^{\rho,r} T_\rho^*u + {T_\rho^*}^{-1}\eta R_t^{\rho,r} T_\rho^*u,
\end{align*}
where in the second last line we used Lemma~\ref{lem:locintertw} and
in the final line we used~\eqref{eq:BrhoR} 
to obtain
\begin{align}\begin{split}\label{eq:localisation}
\eta D_\rho B_{\rho,r} &- D_\rho B_\rho\eta \\
&= \eta D_\rho [B_\rho (\eta_0^2 I) + (1-\eta_0^2)I + r\mathbb{P}_0] - D_\rho B_\rho\eta \\
&= [\eta,D_\rho] B_\rho \eta_0^2 + D_\rho B_\rho\eta(\eta_0^2-1) + \eta D_\rho (1-\eta_0^2) + r \eta D_\rho \mathbb{P}_0 \\
&= [\eta,D_\rho] B_\rho \eta_0^2 + r\eta D_\rho\mathbb{P}_0,
\end{split}\end{align}
since $\eta_0=1$ on $\sppt\eta$.

We now note the identity $\psi_t(DB)=\frac{1}{2i}(R_{-t}-R_t)$,
the pointwise commutator estimate $|[\eta,D_\rho]u(x)| \leq |\nabla\eta(x)| |u(x)|$,
and that $D_\rho\mathbb{P}_0(u_\no,u_0,u_\ta)=(u_0,0,0)$
is a bounded operator.
This shows that
\[
\|\ca_{4B_i} \psi_t(DB) \ca_{4B_i} u\|_{L^2(\mathcal{V},\omega)}^2
\lesssim t^2 \|\ca_{4B_i} u\|_{L^2(\mathcal{V},\omega)}^2
+ \|\psi_t(D_\rho B_{\rho,r})T_\rho^*(\ca_{4B_i} u)\|_{L^2(\mathbb{R}^n,\C^{2+n},\widetilde\omega_\rho)}^2
\]
for all $t\in(0,T]$, all $u \in L^2(\mathcal{V},\omega)$ and all $i\in\N$. Theorem~\ref{thm:mainRn} therefore provides the estimate
\begin{align*}
I_1 &\lesssim \sum_{i\in\N} \left( \int_0^T t \|\ca_{4B_i} u\|_{L^2(\mathcal{V},\omega)}^2 \  dt + \int_0^T  \|\psi_t(D_\rho B_{\rho,r})(T_\rho^*\ca_{4B_i} u)\|_{L^2(\mathbb{R}^n,\C^{2+n},\widetilde\omega_\rho)}^2 \frac{dt}{t} \right) \\
&\lesssim  \sum_{i\in\N} \left(\|\ca_{4B_i} u\|_{L^2(\mathcal{V},\omega)}^2 + \|T_\rho^*\ca_{4B_i} u\|_{L^2(\mathbb{R}^n,\C^{2+n},\widetilde\omega_\rho)}^2\right) \\
&\lesssim \|u\|_{L^2(\mathcal{V},\omega)}^2,
\end{align*}
which completes the proof of Theorem~\ref{thm:main}.
\end{proof}

\begin{proof}[Proof of Theorem~\ref{thm:Kato}]
  In the splitting
  $$
    L^2(\mathcal{V},\omega)=
    L^2(M,\omega)\oplus L^2((M\times \C)\oplus T M,\omega),
  $$
  we consider operators
  $$
    D=\begin{bmatrix} 0 & S^* \\ S & 0 \end{bmatrix}
    \quad\text{and}\quad
    B=\begin{bmatrix} a & 0 \\  0 & \tilde A \end{bmatrix},
  $$
  where 
  $S=\begin{bmatrix} I \\ \nabla_M \end{bmatrix}$
  and
  $\tilde A=\begin{bmatrix} d & c \\  b & A \end{bmatrix}$.
  It follows from the accretivity hypotheses that $B$ is accretive on $\ran(D)$.
  Moreover,   
  $\ran(DB)=\ran(D)$ is closed since $\ran(S)$ is closed.
  Theorem~\ref{thm:main} proves quadratic estimates for $DB$.
  The $H^\infty$ functional calculus bounds then yield
  \begin{equation}  \label{eq:preKato}
    \|\sqrt{(BD)^2}f\|_{L^2(\mathcal{V},\omega)}\eqsim \|BDf\|_{L^2(\mathcal{V},\omega)}\eqsim \|Df\|_{L^2(\mathcal{V},\omega)},
  \end{equation}
  since the symbol $\lambda/\sqrt{\lambda^2}$ is self-inverse and 
  belongs to $H^\infty(S_\nu)$.
  Since $(BD)^2= \begin{bmatrix} aS^*\tilde A S & 0 \\ 0 & \tilde A SaS^*  \end{bmatrix}$, the Kato square root estimate in
   Theorem~\ref{thm:Kato} is the special case
  $f= \begin{bmatrix} u \\ 0 \end{bmatrix}$ of \eqref{eq:preKato}.
\end{proof}

\begin{Rem}
We remark that the full strength of the double stopping time argument
for weights and test functions 
from \cite[Sec.~3.5-7]{ARR}, that Theorem~\ref{thm:main} ultimately
builds on, is not  needed to prove Theorem~\ref{thm:Kato}. 
The reason is that Theorem~\ref{thm:Kato} only uses $B$ with 
a block form structure. See \cite[Sec.~3.8]{ARR}.
\end{Rem}

\section{Degenerate boundary value problems}  \label{sec:L2trace}

\subsection{The boundary $DB$ operator}   \label{sec:bdyop}

We consider second-order elliptic divergence form systems
\begin{equation}  \label{eq:origdivform}
  \divv A \nabla u=0
\end{equation}
on a compact manifold $\Omega$, with Lipschitz
boundary $\bdy \Omega$. 
We assume without loss of generality that $\Omega$ is connected, but not necessarily so for $\bdy \Omega$, 
thus allowing for geometries such as annuli. 
In general $u$ may take values in some $\C^N$, but to simplify the 
presentation we suppress this in notation.
As in \cite{ARR}, we only assume that the coefficents
are degenerate elliptic in the sense that 
$A/\omega \in L^\infty(\End(T\Omega))$ is accretive as in \eqref{eq:intaccr} and \eqref{eq:bdyaccr} below, for a given weight
$\omega\in A_2(\Omega)$.  
By $u$ being a weak solution to \eqref{eq:origdivform} we mean
that $u\in W^{1,2}_\loc(\Omega,\omega)$, so in particular $A\nabla u\in L^1_\loc(\Omega)$, and that 
$\divv(A\nabla u)=0$ holds in distributional sense. 

We let $M$ be a closed Riemannian manifold  (smooth in the sense of Section~\ref{sec:covering})  and consider the 
cylinder $\R\times M$.
We assume that there exists a bi-Lipschitz map
\begin{equation}  \label{eq:paramcyl}
  \rho: [0,\delta)\times M\to U\subset\Omega
\end{equation}
between a finite part of the cylinder $\R\times M$ 
and a neighbourhood $U$ of $\bdy \Omega$,
for some $\delta>0$, with $\rho(\{0\}\times M)=\bdy \Omega$
so that in particular $\partial \Omega$ is bi-Lipschitz homeomorphic to $M$.
Write $M_\rho= (0,\delta)\times M$. 
For example, in \cite{AR2} we used $\rho(t,x)= e^{-t}x$, $M$ being the unit sphere.  
As in Example~\ref{ex:pullcoeffs}, the function $u_\rho= u\circ \rho$ is a weak solution to the
elliptic divergence form system
\begin{equation}   \label{eq:secondonRxM}
   \divv A_\rho \nabla u_\rho=0
\end{equation}
on the cylinder $M_\rho$,
where $A_\rho= J_\rho (d\rho)^{-1} A (d\rho^*)^{-1}$.
In $M_\rho$, we assume that 
$$
\omega_\rho=\omega\circ \rho
$$ 
is independent of $t\in(0,\delta)$. 
We write $\omega_0= \omega_\rho|_M$ and note that 
$\omega_0\in A_2(M)$ by $t$-independence. 
This is natural since even in the uniformly elliptic case $\omega=1$, it is 
well known that some regularity of $t\to A_\rho$ is necessary for the results
that we aim at.
In $\Omega$, we use the measure $d\omega=\omega d\nu$ and
on $M$ we use the measure $d\omega_0= \omega_0d\mu$, where
$d\nu$ and $d\mu$ denote the Riemannian measures on $\Omega$ and $M$
respectively.

Following \cite{AA1, ARR}, 
we analyze $u_\rho$ on $M_\rho$ by
rewriting \eqref{eq:secondonRxM} as a generalized Cauchy--Riemann system
\begin{equation}   \label{eq:firstCR}
  \pd_t f + D_0B f=0, \qquad 0<t< \delta, x\in M,
\end{equation}
for the conormal gradient $f$ of $u_\rho$ at layer $t$, with
$D_0= \begin{bmatrix} 0 & \divv_{M,\omega_0} \\ -\nabla_M & 0 \end{bmatrix}$ having domain
$\Dom(D_0)= W^{1,2}(M,\omega_0) \oplus \Dom(\div_{M,\omega_0})$.
Writing $A_\rho= \begin{bmatrix} a & b \\ c & d \end{bmatrix}$
in the splitting $T(M_\rho)\simeq \C\times TM$, we define the conormal
gradient of $u_\rho$ to be
\begin{equation}   \label{eq:conormalgrad}
f=(f_t)_{0<t<\delta}= \begin{bmatrix} \omega_\rho^{-1}\pd_{\nu_{A_\rho}} u_\rho \\ \nabla_M u_\rho \end{bmatrix},
\end{equation} 
where $\pd_{\nu_{A_\rho}} u_\rho=a\pd_t u_\rho+ b \nabla_M u_\rho$
is the conormal derivative of $u_\rho$. 
We write 
$f_t$ for the function $f(t,\cdot)$ and sometimes, abusing notation,
also for the function $f$. 
Note that the boundary data $\pd_{\nu_{A_\rho}} u_\rho$ and
$\nabla_M u_\rho$ on $M$ essentially are the same as 
$\pd_{\nu_{A}} u$ and $\nabla_{\bdy\Omega} u$ on $\bdy \Omega$.
Indeed,  assuming that these boundary traces exist,  we have 
\begin{equation}   \label{eq:transfofCdata}
  \pd_{\nu_{A_\rho}} u_\rho= J_{\rho|_M}\pd_{\nu_{A}} u
  \circ\rho
  \quad\text{and}\quad \nabla_M u_\rho= (d\rho|_M)^*\nabla_{\bdy\Omega} u
  \circ\rho,
\end{equation}
where $J_{\rho|_M}$ and $(d\rho|_M)^*$ are invertible.
The transformed accretive coefficients $B=(B_t)_{0<t<\delta}$ 
appearing in \eqref{eq:firstCR} are
\begin{equation}   \label{eq:defntranformedB}
B=  \begin{bmatrix} \omega_\rho a^{-1} & -a^{-1}b \\ ca^{-1} & \omega_\rho^{-1}(d-ca^{-1}b) \end{bmatrix}.
\end{equation}
We remark that $d-ca^{-1}b$ is a well known object in matrix theory
called {\em the Schur complement} of $a$ in the matrix $A_\rho$,
and is denoted by $A_\rho/a$.
All this follows as in \cite[Sec.~4]{AA1}, replacing $(0,\infty)$ and $\R^n$
by $(0,\delta)$ and $M$, with modifications as in \cite{ARR}
in the degenerate case.
In particular, the scaling of $f$ by $\omega_\rho$ means that weak solutions 
$u\in W^{1,2}(\Omega, \omega)$ have conormal gradients
$f\in L^2_\loc((0,\delta),\rev L^2)$ on $M_\rho$, where
$\rev L^2=L^2(M, \omega_0)\oplus  L^2(TM, \omega_0)$.
Below in this section, $\|f\|= (\int_M|f|^2 d\omega_0)^{1/2}$ always refer to this weighted 
$\rev L^2$ norm, unless otherwise indicated. 
By $f\in L^2_\loc((0,\delta),\rev L^2)$ being a weak solution to 
\eqref{eq:firstCR}, we mean that this equation holds in distributional sense
on $M_\rho$. Note that $f$ and $Bf$ in particular belong to 
$L^1_\loc((M_\rho\times\C)\oplus TM_\rho)$. 

The accretivity assumptions on $A$ that we need
are
\begin{align}
   \re\int_\Omega \scl{A\nabla u}{\nabla u} d\nu &\gtrsim \int_\Omega |\nabla u|^2 d\omega,\quad u\in W^{1,2}(\Omega,\omega), \label{eq:intaccr} \\
   \int_M \scl{A_\rho(t,x)\nabla v(t,x)}{\nabla v(t,x)} d\mu(x) &\gtrsim\|\nabla v(t,x)\|^2,
   \quad  v\in W^{1,2}(M_\rho,\omega_\rho),  \label{eq:bdyaccr}
\end{align}
for almost every $t\in(0,\delta)$.
The second stronger assumption near $M$ is needed to obtain a 
bisectorial boundary operator.
In particular it 
ensures that the matrix element $a_t$ in $A_\rho$ is invertible, 
since it follows from \eqref{eq:bdyaccr} that $a_t$ is accretive on
all $L^2(M)$,  and that $B_t$
are bounded and accretive. 
See \cite[Sec.~3]{AR2} for this argument. 

The subspace
$$
\mH= \ran(D_0)= \ran(\divv_{M, \omega_0})\oplus \ran(\nabla_M)
\subset \rev L^2
$$
plays a central role in our analysis.

\begin{Prop}  \label{prop:cptresolvents}
The subspace $\mH$ is closed.
 Moreover, the restriction of the self-adjoint operator 
 $D_0: \Dom(D_0)\subset\rev L^2\to \rev L^2$
   to $\mH$ has a compact inverse $D_0^{-1}:\mH\to\mH$.
\end{Prop}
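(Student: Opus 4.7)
The plan is to reduce the proposition to the spectral theory of the weighted Laplacian $-\Delta_{\omega_0}=-\divv_{M,\omega_0}\nabla_M$ on the closed manifold $M$, exploiting the block-antidiagonal form of $D_0$. The key observation is that
\[
D_0^2=\begin{bmatrix}-\Delta_{\omega_0} & 0\\ 0 & -\nabla_M\divv_{M,\omega_0}\end{bmatrix},
\]
and the two diagonal blocks will be intertwined on the ranges of $\nabla_M$ and $\divv_{M,\omega_0}$. Once I know that $-\Delta_{\omega_0}$ has purely discrete spectrum accumulating only at $+\infty$, both conclusions will follow from the spectral theorem: $\sigma(D_0)=\{0\}\cup\{\pm\sqrt{\lambda_k}\}_{\lambda_k>0}$, so that $0$ is isolated in $\sigma(D_0)$ (hence $\mathcal{H}=\ran(D_0)=\nul(D_0)^\perp$ is closed), and the nonzero part of $\sigma(D_0)$ accumulates only at $\pm\infty$ with finite-dimensional eigenspaces (hence $D_0^{-1}|_{\mathcal{H}}$ is compact).

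Since $M$ is closed, $\nabla_M$ and $-\divv_{M,\omega_0}$ are mutually adjoint as in Lemma~\ref{lem:ddeladj}, so $D_0$ is self-adjoint on $\rev L^2$. The main analytic step is the compact embedding $W^{1,2}(M,\omega_0)\hookrightarrow L^2(M,\omega_0)$, which I would establish by covering $M$ with finitely many coordinate charts from Section~\ref{sec:covering}, pulling $\omega_0$ back to a Muckenhoupt $A_2$ weight on a Euclidean ball via Lemma~\ref{lem:pullpush}, and invoking the classical $A_2$-weighted Rellich--Kondrachov theorem on bounded Euclidean domains. A subordinate smooth partition of unity, constructed as in the density argument of Lemma~\ref{lem:ddeladj}, glues the local embeddings into a global one. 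This compactness is the main technical obstacle, though all ingredients are standard.

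Granted this embedding, $-\Delta_{\omega_0}$ is the Friedrichs extension of the closed nonnegative quadratic form $u\mapsto\|\nabla_M u\|_{L^2(TM,\omega_0)}^2$ on $W^{1,2}(M,\omega_0)$, and so has compact resolvent, a discrete spectrum $0=\lambda_0\leq\lambda_1\leq\cdots\to+\infty$, and an orthonormal eigenbasis $(e_k)$ in $L^2(M,\omega_0)$; the kernel $\nul(\nabla_M)=\nul(-\Delta_{\omega_0})$ is finite-dimensional, spanned by the indicator functions of the connected components of $M$. Setting $\varphi_k=\lambda_k^{-1/2}\nabla_M e_k$ for $\lambda_k>0$, the identity $\divv_{M,\omega_0}\nabla_M e_k=-\lambda_k e_k$ shows that $(e_k,0)$ and $(0,\varphi_k)$ span a $D_0$-invariant plane on which $D_0$ acts by $\bigl[\begin{smallmatrix}0 & -\sqrt{\lambda_k}\\-\sqrt{\lambda_k}&0\end{smallmatrix}\bigr]$, with eigenvalues $\pm\sqrt{\lambda_k}$. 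The $\{\varphi_k\}$ exhaust $\ran(\nabla_M)$ and the $\{e_k\}_{\lambda_k>0}$ exhaust $\ran(\divv_{M,\omega_0})$, so $\mathcal{H}$ is the closed Hilbert sum of these $D_0$-invariant planes, proving both assertions simultaneously.
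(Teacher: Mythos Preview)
Your argument is correct, but it takes a different route from the paper's. The paper proceeds abstractly: it invokes the weighted Poincar\'e inequalities of \cite[Thm.~1.5]{FKS} to conclude that $\nabla_M$ has a compact left Fredholm inverse (equivalently, that the embedding $W^{1,2}(M,\omega_0)\hookrightarrow L^2(M,\omega_0)$ is compact), deduces that $\ran(\nabla_M)$ is closed, applies the closed range theorem to get $\ran(\divv_{M,\omega_0})$ closed, and finishes by noting that compactness is preserved under adjoints, so that $\divv_{M,\omega_0}^{-1}$ on its range is compact as well. This yields compactness of $D_0^{-1}|_{\mH}$ in two lines.

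Your approach is more constructive: from the same compact embedding you obtain compact resolvent and discrete spectrum for $-\Delta_{\omega_0}$, then build an explicit orthonormal eigenbasis of $\mH$ out of the pairs $(e_k,\varphi_k)$, on which $D_0$ acts as a $2\times 2$ block with eigenvalues $\pm\sqrt{\lambda_k}$. Both proofs rest on the same analytic input (the weighted Rellich--Kondrachov/Poincar\'e estimate), so neither is essentially harder; the paper's version is shorter and avoids any explicit spectral decomposition, while yours yields the full spectral picture of $D_0$ as a by-product, which is a nice bonus even if not needed here. One small point to tighten in your write-up: when you assert that the $\varphi_k$ exhaust $\ran(\nabla_M)$ (and similarly for $\ran(\divv_{M,\omega_0})$), the surjectivity onto the closed span uses that $\lambda_1>0$ is bounded away from zero---this is implicit in the discreteness of the spectrum, but worth stating, since it is exactly the step that shows the ranges are closed rather than merely dense.
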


\begin{proof}
Recall from Lemma~\ref{lem:ddeladj} that $\divv_{M,\omega_0}= -(\nabla_M)^*$.
It also follows from weighted Poincar\'e inequalities in
\cite[Thm.~1.5]{FKS}
that $\nabla_M$ has a compact left Fredholm inverse.
In particular $\ran(\nabla_M)$ is closed and it follows from 
the closed range theorem, see \cite[Thm.~IV.5.13]{K}, that
$\ran(\divv_{M,\omega_0})$ is closed. 
Hence $\ran(D_0)$ is closed.
Since compactness is preserved when taking adjoints, 
this also shows that $D_0^{-1}:\mH\to\mH$ is compact.
\end{proof}

All the main estimates of the operators below build on the following 
quadratic estimate for the boundary operator $D_0B_0$, where $B_0= \lim_{t\to 0} B_t$.
The precise definition of this limit involves the Carleson norm 
\begin{equation}  \label{eq:modifiedCarl}
  \|\mE\|_*= \|C(W_\infty(|\mE|^2/t))\|_\infty^{1/2},
\end{equation}
where
$$
Cg(x)= \sup_{r<\delta} \left(\int_{B(x,r)} d\omega_0\right)^{-1}\int_{(0,r)\times B(x,r)} |g(t,y)|dtd\omega_0(y)
$$
and $(W_\infty g)(t,x)=\esssup_{W(t,x)} |g|$, using Whitney regions
$W(t,x)= (t/c_0,c_0t)\times B(x,c_1 t)$, for fixed $c_0>1, c_1>0$.
We assume that there exists $B_0\in L^\infty(M;\End(\C\times TM))$ such that 
$\|B_t-B_0\|_*<\infty$. As in \cite[Lem.~2.2]{AA1} (where $\|\cdot\|_C$
equals $\|\cdot\|_*$, but without the restriction $r<\delta$), 
such trace $B_0$ is unique determined by
$B=(B_t)_{0<t<\delta}$ and \eqref{eq:bdyaccr} implies the accretivity \eqref{eq:acc} for $B_0$, with $D$, $B$,
$(\mV,\omega)$ replaced by $D_0$, $B_0$, $(\C\times TM,\omega_0)$. 
Also let $A_0=\lim_{t\to 0} A_\rho$, in the sense that 
$\|A_\rho/\omega_\rho-A_0/\omega_0\|_*<\infty$.

\begin{Prop}   \label{prop:QEforbdyop}
For any bisectorial operator in $\rev L^2$ of the form $D_0B_0$,
where $D_0$ is the operator appearing  in \eqref{eq:firstCR} and $B_0$ is any bounded multiplication operator which is accretive on $\ran(D_0)$,  we have
$$
  \int_0^\infty \|tD_0B_0(I+t^2(D_0B_0)^2)^{-1} f\|^2 \frac{dt}t\lesssim \|f\|^2,
 \qquad \text{for all } f\in \rev L^2.
$$
\end{Prop}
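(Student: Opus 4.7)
The plan is to apply Theorem~\ref{thm:main} on the closed Riemannian manifold $M$, whose compactness makes the geometric hypotheses (lower Ricci bound and positive injectivity radius) trivial, and for which $\omega_0 \in A_2(M) \subset A_2^R(M)$ for any $R \geq \diam(M)$. The technical issue is that $D_0$ on $\rev L^2$ is not the $3\times 3$ operator $D$ from~\eqref{eq:Ddefn}, which carries the identity blocks needed to enforce closed range.

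The natural reduction is to realize $D_0 B_0$ as a component of a $D\tilde B$ system. In the splitting $L^2(\cV,\omega_0) = L^2(M,\omega_0) \oplus \rev L^2$, the operator $D$ from~\eqref{eq:Ddefn} factors as $\left[\begin{smallmatrix} 0 & S^* \\ S & 0 \end{smallmatrix}\right]$ with $S = (I,\nabla_M)^T \colon L^2(M,\omega_0) \to \rev L^2$. I would choose $\tilde B = \left[\begin{smallmatrix} 1 & 0 \\ 0 & B_0 \end{smallmatrix}\right]$, so that $D\tilde B = \left[\begin{smallmatrix} 0 & S^* B_0 \\ S & 0 \end{smallmatrix}\right]$ and $(D\tilde B)^2 = \mathrm{diag}(S^* B_0 S,\, S S^* B_0)$. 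Provided $\tilde B$ is accretive on $\ran(D)$, Theorem~\ref{thm:main} would yield quadratic estimates for $D\tilde B$. The estimates for $D_0 B_0$ on $\rev L^2$ are then recovered via the Hodge splitting $\rev L^2 = \nul(D_0) \oplus \mH$ (Proposition~\ref{prop:cptresolvents}): on $\nul(D_0 B_0)$ the functional calculus vanishes, while on $\mH$ the quadratic estimate follows by identifying the relevant components of $\psi_t(D\tilde B)$ applied to vectors of the form $(0, f)^T$, in the spirit of the Hodge-splitting argument from \cite[Sec.~10.1]{AAMc1}.

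The main obstacle is the accretivity transfer. The hypothesis provides $B_0$ accretive only on $\ran(D_0) = \ran(\divv_{M,\omega_0}) \oplus \ran(\nabla_M)$, whereas $\ran(D)$ contains $\ran(S) = \{(u,\nabla_M u) : u \in W^{1,2}(M,\omega_0)\}$, a subspace not contained in $\ran(D_0)$. If this cannot be handled by a Hodge-type modification of $\tilde B$ that preserves the action on $\mH$, the fallback is to follow the proof of Theorem~\ref{thm:main} from Section~\ref{sect:loc} directly for $D_0 B_0$: off-diagonal decay as in Lemma~\ref{lem:ODest} (which only uses bisectoriality and the commutator identity with smooth cut-offs) together with a finite harmonic-chart cover of $M$ localize the quadratic estimate; pullback to $\R^n$, the $A_2(\R^n)$-extension of $\omega_0$ from Lemma~\ref{lem:extweight}, and an accretivity-preserving coefficient extension in the spirit of Section~\ref{sec:extension} reduce matters to a weighted Euclidean quadratic estimate for the two-block Dirac operator on $\R^n$, which is covered by the methods of \cite{ARR}. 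Either route gives the desired $\lesssim \|f\|^2$ bound; the reverse direction and the absence of the $\sup$ over $f$ in the statement are handled by the standard duality and functional-calculus argument recalled in Section~\ref{sec:funccal}.
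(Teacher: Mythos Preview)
Your embedding is not the one that works. With $\tilde B=\left[\begin{smallmatrix}1&0\\0&B_0\end{smallmatrix}\right]$ you correctly get $(D\tilde B)^2=\left[\begin{smallmatrix}S^*B_0S&0\\0&SS^*B_0\end{smallmatrix}\right]$, but neither block is $(D_0B_0)^2$; indeed $SS^*=\left[\begin{smallmatrix}I & -\divv_{M,\omega_0}\\ \nabla_M & -\nabla_M\divv_{M,\omega_0}\end{smallmatrix}\right]$ bears no direct relation to $D_0$, so $\psi_t(D\tilde B)(0,f)^T$ does not recover $\psi_t(D_0B_0)f$. The Hodge-splitting device of \cite[Sec.~10.1]{AAMc1} passes between $\Gamma+B_1\Gamma^*B_2$-type operators and $DB$-type operators, not between two different $DB$-operators, so it does not close this gap.

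The paper's route is different. Writing $B_0=\left[\begin{smallmatrix}a'&b'\\c'&d'\end{smallmatrix}\right]$, it places $B_0$ on the $(u_\perp,u_\parallel)$ components rather than on $(u_0,u_\parallel)$, setting $\widetilde B=\left[\begin{smallmatrix}a'&0&b'\\0&I&0\\c'&0&d'\end{smallmatrix}\right]$, and applies Theorem~\ref{thm:main} to $D\widetilde B$. The key step you are missing is that the paper does \emph{not} try to read $D_0B_0$ off as a block of $D\widetilde B$; instead it observes that
\[
\begin{bmatrix}0&0&-\divv_{M,\omega_0}\\0&0&0\\\nabla_M&0&0\end{bmatrix}
\begin{bmatrix}a'&0&b'\\0&0&0\\c'&0&d'\end{bmatrix}
=D\widetilde B+V
\]
with $V$ \emph{bounded} (the identity blocks in $D$ and in $\widetilde B$ contribute only zero-order terms), and that the left side is unitarily equivalent to $-D_0B_0\oplus 0$ after swapping the two middle components. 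Since $D_0B_0$ is bisectorial with closed range by Proposition~\ref{prop:cptresolvents}, Lemma~\ref{lem:bddpert} transfers the quadratic estimate from $D\widetilde B$ to $D\widetilde B+V$, hence to $D_0B_0$. This bounded-perturbation step is the missing ingredient; your fallback of rerunning Section~\ref{sect:loc} for the two-block operator could be made to work but is far heavier than needed.

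On your accretivity concern: with the paper's placement the requirement becomes accretivity of $B_0$ on $L^2(M,\omega_0)\oplus\ran(\nabla_M)$ rather than on $\ran(S)$, and this differs from $\ran(D_0)=\ran(\divv_{M,\omega_0})\oplus\ran(\nabla_M)$ only by the finite-dimensional space of locally constant functions in the first slot; the paper simply asserts that the accretivity of $\widetilde B$ on $\ran(D)$ follows.
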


\begin{proof}
  In the splitting
  $$
    L^2(M, \omega)\oplus 
    L^2(M, \omega)\oplus L^2(TM, \omega)
  $$
  we consider the operator $D$ from Theorem~\ref{thm:main}, given
  by \eqref{eq:Ddefn}.
  Given $B_0=\begin{bmatrix} a' & b' \\ c' & d' \end{bmatrix}$, we define
  $
     \widetilde B=\begin{bmatrix} a' & 0 & b' \\  0 & I & 0 \\ c' & 0 & d' \end{bmatrix}$.
  It follows that $\widetilde B$ is accretive on $\ran(D)$ in the sense \eqref{eq:acc}.
  Since $M$ is compact, the geometric hypotheses in
  Theorem~\ref{thm:main} hold, giving quadratic estimates
  for $D \widetilde B$.
  We have
$$
\begin{bmatrix} 0 & 0 & -\divv_{M,\omega_0} \\  0 & 0 & 0 \\ \nabla_M & 0 & 0 \end{bmatrix}
\begin{bmatrix} a' & 0 & b' \\  0 & 0 & 0 \\ c' & 0 & d' \end{bmatrix}
= D\widetilde B+ V
$$
for some bounded operator $V$. The left hand side is unitary equivalent to
$-D_0B_0\oplus 0$, so Proposition~\ref{prop:cptresolvents} shows that $\ran(D\widetilde B+V)$
is closed.
Lemma~\ref{lem:bddpert} now 
proves that quadratic estimates also hold for $D_0B_0$.
\end{proof}

As in Section~\ref{sec:funccal}, it follows from Proposition~\ref{prop:QEforbdyop}
that $D_0B_0$ has a bounded $H^\infty$ functional calculus   (the quadratic estimates for the adjoint $B_0^*D_0$ hold by similarity, since $B_0^*D_0=B_0^*(D_0B_0^*)(B_0^*)^{-1}$ on $B_0^*\ran(D_0)=\ran(B_0^*D_0)$).  
In particular we use the spectral projections
$E_0^+=\chi^+(D_0B_0)$ and $E_0^-=\chi^-(D_0B_0)$ for the right and left
half planes, given by the symbols 
$$
  \chi^\pm(\lambda)=
  \begin{cases}
     1, & \pm\re\lambda>0, \\
     0, & \pm \re\lambda\le 0. 
  \end{cases}
$$
The spectral subspaces $\ran(E^+_0)$ and $\ran(E_0^-)$ are closed complementary subspaces of
$\mH$, which generalize the classical Hardy subspaces from complex analysis, and
we denote them $E_0^\pm\mH$. 
We set $E^0_0= I-E^+_0-E^-_0$, which is the projection onto $\nul(D_0B_0)$ along $\mH$.
We also use the bounded Poisson semigroup 
$e^{-t\Lambda}$, with $\Lambda=|D_0B_0|$, which are the operators given by
the symbols $e^{-t|\lambda|}$, where
$|\lambda|= \lambda\sgn(\lambda)$ and
$\sgn(\lambda)= \chi^+(\lambda)-\chi^-(\lambda)$.

Similar to \cite[Prop.~3.3]{AR2} it follows that, for each fixed $0<t<\delta$, the conormal gradient $f$ of $u_\rho$ almost belongs to $\mH$.
The following lemma takes care of this technical issue.

\begin{Lem}   \label{lem:e0}
  Consider a weak solution $f=(f_t)_{0<t<\delta}\in L^2_\loc((0,\delta),\rev L^2)$ to 
  \eqref{eq:firstCR}. Then $E_0^0 f_t\in\rev L^2$ is independent of $t$ and belongs to a 
  finite dimensional subspace of $\rev L^2$.
  Denoting this constant value by $E_0^0f_0$, we have
  $$
     \|E^0_0f_0\|\simeq  \sum_i \Big|\int_{M_i}\pd_{\nu_{A_0}}u_\rho d\mu\Big|,
$$
where $M_i$ denote the connected components of $M$.
\end{Lem}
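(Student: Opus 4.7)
The plan is to work with the easier weighted-orthogonal projection $P_{\mH^\perp}: \rev L^2 \to \mH^\perp = \nul(D_0)$ and to transfer the conclusion to the skew projection $E_0^0$. By self-adjointness of $D_0$, the complement $\mH^\perp$ splits as the space of locally constant functions on $M$ in the first slot plus $\nul(\div_{M,\omega_0})$ in the second. By Section~\ref{sec:funccal} and accretivity of $B_0$ on $\ran(D_0)$ (which forces $\mH \cap \nul(D_0 B_0)=\{0\}$ and thus $\ran(D_0 B_0)=\mH$), we have the topological splitting $\rev L^2 = \mH \oplus \nul(D_0 B_0)$, so $E_0^0$ is the projection onto $\nul(D_0 B_0)$ along $\mH$. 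In particular $f_t - E_0^0 f_t \in \mH$, hence $P_{\mH^\perp} E_0^0 f_t = P_{\mH^\perp} f_t$, and $P_{\mH^\perp}|_{\nul(D_0 B_0)}$ is injective.

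Next I compute $P_{\mH^\perp} f_t$. The tangential component $f_t^\ta = \nabla_M u_\rho(t,\cdot)$ belongs to $\ran(\nabla_M)$, which is weighted-orthogonal to $\nul(\div_{M,\omega_0})$; hence only the locally constant part of $f_t^\no = \omega_\rho^{-1} \pd_{\nu_{A_\rho}} u_\rho(t,\cdot)$ contributes to $P_{\mH^\perp} f_t$, and a direct calculation yields
\[
P_{\mH^\perp} f_t = \sum_i \omega_0(M_i)^{-1}\, c_i(t)\, \ca_{M_i}, \qquad c_i(t) := \int_{M_i} \pd_{\nu_{A_\rho}} u_\rho(t,\cdot)\,d\mu.
\]
To see that each $c_i(t)$ is independent of $t$, I test the weak form of $\divv A_\rho \nabla u_\rho = 0$ against $\phi(t,x) = \psi(t)\, \ca_{M_i}(x)$ with $\psi \in C^\infty_c((0,\delta))$; since $\nabla_M \ca_{M_i} = 0$, the identity reduces to $\int_0^\delta \psi'(t)\, c_i(t)\, dt = 0$, so $c_i$ is a.e.\ constant.

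Combining these steps, $E_0^0 f_t$ lies in the preimage $W$ of the $k$-dimensional locally constant subspace under the injective bounded map $P_{\mH^\perp}|_{\nul(D_0 B_0)}$, where $k$ is the number of connected components of $M$; thus $\dim W \leq k$, and $E_0^0 f_t$ is determined by the $t$-independent constants $(c_i)$, hence independent of $t$. The bounded invertibility of $P_{\mH^\perp}|_W$ onto its finite-dimensional image, combined with $\|P_{\mH^\perp} f_t\|^2 = \sum_i |c_i|^2/\omega_0(M_i)$ and the equivalence of the $\ell^1$ and $\ell^2$ norms on $\mathbb{C}^k$, yields $\|E_0^0 f_0\| \simeq \sum_i |c_i|$. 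The main subtlety is that $E_0^0$ is a skew (non-orthogonal) projection, so it cannot be directly identified with $P_{\mH^\perp}$; the injectivity of $P_{\mH^\perp}|_{\nul(D_0 B_0)}$ is the device that transfers finite-dimensionality and $t$-constancy from $P_{\mH^\perp} f_t$ back to $E_0^0 f_t$.
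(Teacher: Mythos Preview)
Your proof is correct and follows essentially the same approach as the paper: both reduce to the orthogonal projection $P_{\mH^\perp}$ (the paper calls it $P$), compute $P_{\mH^\perp} f_t$ as the locally constant averages of the conormal derivative, and use that $E_0^0$ and $P_{\mH^\perp}$ share the kernel $\mH$ to transfer estimates. The only difference is that the paper obtains $t$-independence of $E_0^0 f_t$ in one line by applying $E_0^0$ directly to \eqref{eq:firstCR} and observing that $E_0^0$ annihilates $\ran(D_0)\ni D_0 B f_t$, whereas you establish it indirectly by first proving the $c_i$ are constant via the weak formulation and then invoking injectivity of $P_{\mH^\perp}|_{\nul(D_0 B_0)}$.
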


A simple example showing that $E^0_0f_0$ may be non-trivial is $u(t,x)=t$
on $\Omega= (0,1)\times S^1$.

\begin{proof}
  Applying $E_0^0$ to \eqref{eq:firstCR} immediately shows that $\pd_t E_0^0 f_t=0$.
  To see the norm estimate, we consider the orthogonal projection $P$ onto $\mH^\perp$
  in $\rev L^2$. Since $\nul(E_0^0)= \nul(P)= \mH$ it follows that 
  $\|E^0_0 f_t\|\simeq \|P f_t\|$.
  Indeed $\|E^0_0 f_t\|=\|E_0^0(P f_t+ (I-P)f_t)\|= \|E_0^0P f_t\|
  \lesssim \|Pf_t\|$, and the reverse estimate is proved similarly.
Since $\nabla_M u_\rho\in \ran(\nabla_M)$ and $\ran(\div_{M,\omega_0})^\perp= \nul(\nabla_M)$,
it follows that $Pf_t=\begin{bmatrix}c \\ 0\end{bmatrix}$, where $c$ is the locally constant function
given by the average of $\pd_{\nu_{A_\rho}}u_\rho$ on each component
$M_i$.
This shows the estimate of $E_0^0f_t= E_0^0 Pf_t$.
Since the locally constant functions $c$ form a finite dimensional space  (a basis is given by $\ca_{M_i}$) ,
this completes the proof.
\end{proof}

Besides the $\rev L^2$ topology for $f_0$, we also obtain results for $f_0$ in a larger
trace space $\rev W^{-1,2}$ which we now define.
We use test functions $v$ from the weighted Sobolev space $W^{1,2}(M,\omega_0)$ with
norm
$$
   \|v\|_{W^{1,2}}^2= \int_M(|\nabla v|^2 + |v|^2) d\omega_0.
$$
Define dual spaces 
\begin{align*}
W^{-1,2}(M,\omega_0^{-1}) &=(W^{1,2}(M,\omega_0))^*, \\
W^{-1,2}(TM,\omega_0)&=(W^{1,2}(TM,\omega_0^{-1}))^*.
\end{align*}
Using the unweighted $L^2(M,d\mu)$ pairing, we realize 
$L^2(M,\omega_0^{-1})$ as a subspace of $W^{-1,2}(M,\omega_0^{-1})$
and 
$L^2(TM,\omega_0)$ as a subspace of $W^{-1,2}(TM,\omega_0)$.
The weighted Sobolev $W^{-1,2}$ trace space that we use for $f$ is
\begin{equation}   \label{eq:Hminus1space}
  \rev W^{-1,2}= \omega_0^{-1}W^{-1,2}(M,\omega_0^{-1})
  \oplus W^{-1,2}(TM,\omega_0),
\end{equation}
that is $\pd_{\nu_A}u\in W^{-1,2}(M,\omega_0^{-1})$
and $\nabla_M u\in W^{-1,2}(TM,\omega_0)$.
Here $\omega_0^{-1}W^{-1,2}(M,\omega_0^{-1})$ denotes the
space dual to $\omega_0 W^{1,2}(M,\omega_0)$.
Note that the $W^{1,2}$ and $W^{-1,2}$ analogues of $\omega_0^{-1}L^2(M,\omega_0^{-1})=
L^2(M,\omega_0)$ do not hold since in general $\omega_0$ is not differentiable.

\begin{Lem}   \label{lem:HtoHminone}
  The Sobolev space $\rev W^{-1,2}$ contains  $\rev L^2$ as a dense subspace.
  Denote by $\mH^{-1}$ the closure of $\mH$ in $\rev W^{-1,2}$.
  The operators $D_0$, $D_0B_0$ and $\Lambda$ all have $\rev L_2$ domains
  which are dense in $\mH$. They all extend uniquely to isomorphisms $\mH\to \mH^{-1}$.  
\end{Lem}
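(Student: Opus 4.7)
The plan is to prove the four assertions in turn: density of $\rev L^2$ in $\rev W^{-1,2}$; density of the $\rev L^2$-domains in $\mH$; and the isomorphism property, treated first for $D_0$ and deduced for $D_0B_0$ and $\Lambda$ by composition. The duality pairing of $\rev W^{-1,2}$ with the test space $W^{1,2}(M,\omega_0) \oplus W^{1,2}(TM,\omega_0^{-1})$, read off from the unweighted $L^2(M,d\mu)$ pairing, is
$$
(f,\phi) = \int_M f_\no \phi_\no \, d\omega_0 + \int_M \scl{f_\ta}{\phi_\ta} d\mu.
$$
For $f \in \rev L^2$ this is bounded by $\|f\|_{\rev L^2}$ times an $L^2$-type norm of $\phi$, giving the continuous inclusion $\rev L^2 \hookrightarrow \rev W^{-1,2}$. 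Density then follows by Hahn--Banach, since any test $\phi$ annihilating $\rev L^2$ through this pairing must vanish in the underlying weighted $L^2$ spaces and hence in the Sobolev space.

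For density of the $\rev L^2$-domains in $\mH$, note that $\mH = \ran(D_0) = (\nul D_0)^\perp$ is a reducing subspace for the self-adjoint $D_0$, so $\Dom(D_0) \cap \mH$ is dense in $\mH$. For $D_0B_0$ one first verifies $\clos{\ran(D_0 B_0)} = \mH$: the orthogonal complement is $\nul(B_0^* D_0)$, and if $B_0^* D_0 f = 0$ then $D_0 f \in \nul(B_0^*) \cap \ran(D_0) = \{0\}$ by accretivity of $B_0^*$ on $\mH$ (inherited from \eqref{eq:acc} via $\re\scl{B_0^* u}{u} = \re\scl{B_0 u}{u}$), so $f \in \mH^\perp$. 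The bisectorial splitting \eqref{eq:dbsplit} then yields density of $\Dom(D_0 B_0)\cap \mH$ in $\mH$, and the same holds for $\Dom(\Lambda) = \Dom(D_0 B_0)$.

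For the isomorphism $D_0: \mH \to \mH^{-1}$, the bounded extension $\rev L^2 \to \rev W^{-1,2}$ is immediate from the integration-by-parts identity
$$
(D_0 f, \phi) = -\int_M \scl{f_\ta}{\nabla \phi_\no} d\omega_0 + \int_M f_\no \div \phi_\ta \, d\mu,
$$
and the image of $\mH$ lies in $\mH^{-1}$ by continuity. The heart of the argument, and the step I expect to be the main obstacle, is the coercivity $\|f\|_{\rev L^2} \lesssim \|D_0 f\|_{\rev W^{-1,2}}$ for $f \in \Dom(D_0) \cap \mH$. To establish it, I would exploit the weighted Hodge structure on the compact manifold $M$: writing $f_\ta = \nabla_M u$ and $\omega_0 f_\no = \div(\omega_0 \nabla_M w)$ by solving the associated weighted Laplace problems on each connected component of $M$ via Lax--Milgram, with $\|u\|_{W^{1,2}(M,\omega_0)} + \|w\|_{W^{1,2}(M,\omega_0)} \lesssim \|f\|_{\rev L^2}$ by the weighted Poincar\'e inequality of \cite[Thm.~1.5]{FKS}, and then testing $D_0 f$ against $\phi := (u,\, -c\,\omega_0 \nabla_M w)$ for a suitable $c>0$. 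Integration by parts produces $(D_0 f, \phi) = -(\|f_\ta\|^2 + c\|f_\no\|^2)$, while $\|\phi\|$ in the test space is $\lesssim \|f\|_{\rev L^2}$; together with the density of $\mH$ in $\mH^{-1}$ and the Open Mapping Theorem, this delivers the isomorphism.

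The operators $D_0 B_0$ and $\Lambda$ then follow by composition. For $D_0 B_0$, the identity $B_0 h - P_\mH B_0 h \in \mH^\perp = \nul(D_0)$ yields the factorization $D_0 B_0|_\mH = D_0 \circ (P_\mH B_0)|_\mH$, where $(P_\mH B_0)|_\mH : \mH \to \mH$ is a bounded isomorphism by accretivity of both $B_0$ and $B_0^*$ on $\mH$; composition with the $D_0$ isomorphism gives the claim. For $\Lambda$, write $\Lambda = D_0 B_0 \cdot \sgn(D_0 B_0)$ in the $H^\infty$ calculus from Proposition~\ref{prop:QEforbdyop}; on $\mH = \clos{\ran(D_0 B_0)}$ the operator $\sgn(D_0 B_0)$ squares to the identity, so it is an involution and hence an isomorphism of $\mH$, and composition with the $D_0 B_0$ isomorphism completes the proof.
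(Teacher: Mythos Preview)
Your overall architecture matches the paper's: reduce to the $D_0$ case and deduce $D_0B_0$, $\Lambda$ by composing with the $\mH$-isomorphisms $P_\mH B_0$ and $\sgn(D_0B_0)$. The density arguments and the factorizations for $D_0B_0$ and $\Lambda$ are correct and essentially identical to the paper's.

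The gap is in your coercivity step for $D_0$, specifically in the tangential test function $\phi_\ta=-c\,\omega_0\nabla_M w$. The predual of $W^{-1,2}(TM,\omega_0)$ is $W^{1,2}(TM,\omega_0^{-1})$ with a \emph{full} first-order Sobolev norm, not merely the graph norm of $\div$; this is what the paper means when it invokes ``weighted estimates of regularized Poincar\'e maps for the divergence'' (Bogovski\u{\i}-type operators) to show that $\div_M: W^{1,2}(TM,\omega_0^{-1})\to L^2(M,\omega_0^{-1})$ is semi-Fredholm. Since $\omega_0$ is only an $A_2$ weight with no differentiability assumed, you cannot control the full gradient of $\omega_0\nabla_M w$, so $\phi_\ta$ need not lie in the test space, and the bound $\|\phi\|_{\text{test}}\lesssim\|f\|_{\rev L^2}$ is unjustified.

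The paper circumvents this by a duality argument rather than explicit testing: the semi-Fredholm map $\div_M: W^{1,2}(TM,\omega_0^{-1})\to L^2(M,\omega_0^{-1})$ dualizes to give a semi-Fredholm extension of $\nabla_M: L^2(M,\omega_0)\to W^{-1,2}(TM,\omega_0)$, which combined with the (easier) dual of the weighted Poincar\'e inequality for $\nabla_M$ yields the semi-Fredholm property of $D_0:\rev L^2\to\rev W^{-1,2}$ with kernel $\mH^\perp$ and dense range $\mH$. Your testing approach can be salvaged by the same ingredient: replace $\omega_0\nabla_M w$ by a Bogovski\u{\i} solution $v\in W^{1,2}(TM,\omega_0^{-1})$ to $\div v=\omega_0 f_\no$ with $\|v\|_{W^{1,2}(TM,\omega_0^{-1})}\lesssim\|\omega_0 f_\no\|_{L^2(M,\omega_0^{-1})}=\|f_\no\|$, which is exactly the content of the weighted regularized Poincar\'e map the paper cites.
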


\begin{proof} 
The denseness follows from a localization and mollification argument as in Lemma~\ref{lem:ddeladj}, and duality.
To prove the mapping properties $\mH\to \mH^{-1}$, 
consider first the operator $D_0$. 

We have a  semi-Fredholm map
$\nabla_M: W^{1,2}(M,\omega_0)\to L^2(TM,\omega_0)$, by
weighted Poincar\'e inequalities \cite[Thm.~1.5]{FKS}.
By duality this yields a semi-Fredholm map
$L^2(TM, \omega_0)\to \omega_0^{-1} W^{-1,2}(M, \omega_0^{-1})$, 
which by Lemma~\ref{lem:ddeladj} is seen to coincide with $\divv_{M, \omega_0}$
on the   dense subspace   $\Dom(\divv_{M, \omega_0})\subset L^2(TM, \omega_0)$.
We continue to write $\divv_{M, \omega_0}$ for this unique continuous
extension to $L^2(TM, \omega_0)$.

Similarly we have a bounded operator
$\divv_M: W^{1,2}(TM,\omega_0^{-1})\to L^2(M,\omega_0^{-1})$.
This is also a semi-Fredholm map, which
can be proved through weighted estimates of regularized Poincar\'e
maps for the divergence. See \cite[Sec.~10.4]{R2}
for the unweighted case, which can be extended to the weighted case,
similar to \cite[Thm.~1.5]{FKS}.
By duality this yields a semi-Fredholm map
$L^2(M,\omega_0)\to W^{-1,2}(TM,\omega_0)$,
which by Lemma~\ref{lem:ddeladj} is seen to coincide with $\nabla_M$
on the   dense subspace   $W^{1,2}(M,\omega_0)\subset L^2(M, \omega_0)$.
We continue to write $\nabla_M$ for this unique continuous
extension to $L^2(M, \omega_0)$.

This yields a unique continuous extension of $D_0$ to a semi-Fredholm
operator $\rev L^2\to \rev W^{-1,2}$, with null space 
$\mH^\perp\subset \rev L^2$ and range being the closure of $\mH$
in $\rev W^{-1,2}$.
Hence the restriction $D_0: \mH\to \mH^{-1}$ is an isomorphism.
For the operator $D_0B_0$ we note that the accretivity of $B_0$ on $\mH$
shows that $PB_0: \mH\to\mH$ is an isomorphism, where $P$ denotes
the orthogonal projection $\rev L^2\to \mH$.
Therefore $D_0B_0= D_0(PB_0): \mH\to\mH^{-1}$ is an isomorphism.
For the operator $\Lambda= D_0B_0\sgn(D_0B_0)$ we note that
$\sgn(D_0B_0):\mH\to \mH$ is an isomorphism, which yields
the isomorphism $\Lambda: \mH\to \mH^{-1}$.
\end{proof}

We want to define $b(D_0B_0)$ in the functional calculus
of $D_0B_0$ as operators on the closed subspace $\mH^{-1}$
and to show that these are bounded in the $\rev W^{-1,2}$ norm.
Using Lemma~\ref{lem:HtoHminone}, we extend operators $b(D_0B_0): \mH\to \mH$
in the functional calculus of $D_0B_0$, for $b\in H^\infty(S^o_\theta)$, 
to bounded operators 
$b(D_0B_0): \mH^{-1}\to \mH^{-1}$, by
setting 
$$
  b(D_0B_0)f= \Lambda(b(D_0B_0) \Lambda^{-1} f), \qquad f\in \mH^{-1}.
$$
($\Lambda$ can be replaced by $D_0B_0$ in this definition.)
It appears problematic to extend the operators $b(D_0B_0)$ to all of 
$\rev W^{-1,2}$, but we shall not need to do so.
We only need to act with $E_0^\pm$ on solutions $f_0\in \rev W^{-1,2}$ to \eqref{eq:firstCR}.
To this end, we recall from Lemma~\ref{lem:e0} that $E^0_0f_0\in \rev L^2$,
even if $f_0\in \rev W^{-1,2}$, since $t\mapsto E^0_0f_t$ is constant.
With this in mind, 
$$
  E^\pm_0 f_0 = E^\pm_0(f_0-E_0^0 f_0)
$$
is well defined, since $f_0-E_0^0 f_0\in \mH^{-1}$ and $E_0^\pm E_0^0=0$.

\subsection{Atiyah-Patodi-Singer conditions}   \label{sec:aps}

Given a weak solution $u$ to $\div A\nabla u=0$ in $\Omega$, with conormal
gradient $f$ on $M_\rho$, we prove in this section
existence and estimates of a trace $f_0$ on $M$ as $t\to 0$.
The spectral projections for the bisectorial operator $D_0B_0$ give the
splitting
\begin{equation}   \label{eq:f0split}
  f_0= E^+_0 f_0 + E_0^0 f_0+ E_0^- f_0.
\end{equation}
The part $E_0^+ f_0$ constitutes the APS boundary datum in our setting,
and we prove below equivalences of norms between the APS datum $E_0^+ f_0$
and the solution $\nabla u$,
modulo the part $E_0^0 f_0$, which takes values in a finite dimensional space.
We summarize this in the next two statements.

For boundary $L^2$ topology for $f_0$, we use the following 
non-tangential maximal function for $\nabla u$.
Given a function $f$ on the cylinder, supported on $M_\rho$, we define
the $L^2$ Whitney averaged non-tangential maximal function
$$
\widetilde N_* f(x)= \sup_{0<t<c_0\delta} (W_2f)(t,x),\qquad x\in M,
$$
where 
$(W_2f)(t,x)=\left(\barint_{W(t,x)}|f|^2 ds d\omega_0\right)^{1/2}$.

Let also $\eta_t= \max(0,\min(1,2-2t/\delta))$.
With slight abuse of notation we also consider $\eta$ as a function on $M_\rho$,
and on $\Omega$ by zero extension.
Our APS solvability estimate for the $\rev L^2$ boundary function space is the following.
 
\begin{Thm}   \label{thm:N}
Let $\Omega$, $\rho$, $M$, $\omega$, $A$ and $B_t$ be as in Section~\ref{sec:bdyop}.
Assume that there exists $t$-independent coefficients $B_0=B_0(x)$
on $M_\rho$,
so that $\|B_t-B_0\|_*<\infty$.
Consider a weak solution $u$ to \eqref{eq:origdivform}, 
with conormal gradient $f$ of $u_\rho$ as in \eqref{eq:conormalgrad}, where
$\nabla u$ belongs to the Banach space $\mX\subset L^2(T\Omega,\omega)$ 
with norm
\begin{equation}   \label{eq:Xnormdef}
  \|g\|^2_\mX= \int_M |\widetilde N_* (\eta g\circ \rho))|^2\omega_0d\mu + \int_\Omega |(1-\eta) g|^2 \omega d\nu.
\end{equation}
Then the trace $\nabla u|_{\bdy \Omega}$ exists in the sense that
\begin{equation}   \label{eq:Diniconv}
  \lim_{t\to 0}t^{-1}\int_t^{2t} \|f_s-f_0\|^2 ds=0,
\end{equation}
for some $f_0\in \rev L^2$ with 
\begin{equation}   \label{eq:Xest}
  \|f_0\|\lesssim \|\nabla u\|_\mX.
\end{equation}
If $\|B_t-B_0\|_*$ is small enough, for some $\delta>0$, 
depending on $[\omega_0]_{A_2(M)}$, $\|A|_M/\omega_0\|_\infty$ and the
constant in \eqref{eq:bdyaccr},
then we also have the estimate
\begin{equation}   \label{eq:Xestrev}
  \|\nabla u\|_\mX\lesssim \|E_0^+ f_0\|+ \|E_0^0 f_0\|,
\end{equation}
with the implicit constant also depending on the constant in 
\eqref{eq:intaccr}.
\end{Thm}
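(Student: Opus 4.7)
I would follow the template of \cite{AA1, AR2}, which passes between weak solutions of the elliptic divergence-form equation and solutions of the first-order system on the cylinder, but tracking the finite-dimensional kernel component $E_0^0\mH$ that arises when $M$ has nontrivial topology. Rewriting \eqref{eq:firstCR} in perturbative form
\[
  (\pd_t + D_0B_0)f = D_0(B_0 - B_t)f \qquad \text{on } M_\rho,
\]
the spectral projections of $D_0B_0$ from Proposition~\ref{prop:QEforbdyop} split $f_t = E_0^+ f_t + E_0^0 f_t + E_0^- f_t$; Lemma~\ref{lem:e0} already handles the $t$-independent piece $E_0^0 f_t$, and only the $E_0^\pm$ components need to be analyzed dynamically through the bounded Poisson semigroups $e^{-t\Lambda}$ on $E_0^\pm\mH$.

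For the trace existence \eqref{eq:Diniconv} and the bound \eqref{eq:Xest}, I would run Duhamel: evolve $E_0^-f$ backward from a reference layer $t_0 \in (0,\delta)$ and $E_0^+ f$ forward from $0$, obtaining integrals of the form $\int e^{-|t-s|\Lambda} E_0^\pm D_0(B_0 - B_s) f_s \, ds$, which are paired against the Carleson measure $|B_t - B_0|^2/t \, dt\, d\omega_0$ via standard tent-space/non-tangential duality, with the square-function bound supplied by Proposition~\ref{prop:QEforbdyop}. Each integral is then controlled by $\|\widetilde N_*(\eta f)\|_{L^2(M,\omega_0)} \cdot \|B_t - B_0\|_*$, and finiteness of $\|B_t - B_0\|_*$ forces a Whitney-$L^2$ Cauchy condition on $f_s$ as $s \to 0$, yielding the trace $f_0 \in \rev L^2$ with $\|f_0\| \lesssim \|\nabla u\|_\mX$; the interior portion of $\|\nabla u\|_\mX$ is absorbed via an interior Caccioppoli estimate.

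For the reverse estimate \eqref{eq:Xestrev} under small $\|B_t - B_0\|_*$, I would take the APS data $h = E_0^+ f_0 + E_0^0 f_0$, build the unperturbed solution $\tilde f_t := e^{-t\Lambda} E_0^+ f_0 + E_0^0 f_0$, and close the fixed-point equation $f = \tilde f + Tf$ where $T$ is the Duhamel integral with kernel built from $e^{-|t-s|\Lambda}$ on $E_0^\pm\mH$. The same tent-space bound shows $T$ is a contraction in $\widetilde N_*$-norm when $\|B_t - B_0\|_*$ is small enough, so a Neumann series gives a unique fixed point $f$ with $\|\widetilde N_*(\eta f)\|_{L^2(M,\omega_0)} \lesssim \|h\|$; passing back to $\nabla u$ via \eqref{eq:conormalgrad} and \eqref{eq:intaccr} then yields \eqref{eq:Xestrev}.

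The main obstacle will be the identification step, showing that the constructed fixed point actually coincides with the conormal gradient of the given solution $u$. When $M$ has nontrivial topology, $E_0^0\mH \neq 0$ and the reconstruction of \cite[Sec.~16]{AR2} breaks down. I expect to resolve this via the Fredholm argument flagged in the introduction: the compact resolvent from Proposition~\ref{prop:cptresolvents} makes the boundary-data map $u \mapsto (E_0^+ f_0, E_0^0 f_0)$ semi-Fredholm, and the Open Mapping Theorem then converts qualitative injectivity (modulo a finite-dimensional obstruction) into the implicit quantitative bound in \eqref{eq:Xestrev}, consistent with the non-trackable dependence of the implicit constant noted in the introduction.
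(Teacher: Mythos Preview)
Your trace-existence argument is essentially the paper's: the integrated form of \eqref{eq:firstCR} (their \eqref{eq:postintegration}) is exactly your Duhamel representation, and the Carleson/tent-space estimates you invoke are the right ones. The difference is in the reverse estimate \eqref{eq:Xestrev}, where your primary strategy has a genuine gap.

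Your fixed-point equation $f = \tilde f + Tf$ cannot close as a contraction. On the compact manifold $\Omega$ the cutoff $\eta$ is essential, and the integrated equation reads
\[
  \eta_t f_t = e^{-t\Lambda}h^+ + S(\mE_t\eta_t f_t) + \widetilde S(\eta'_t f_t) + \eta_t E_0^0 f_0,
\]
where $\widetilde S(\eta'_t f_t)$ couples the boundary layer to the interior of $\Omega$ through $\supp\eta'\subset(\delta/2,\delta)$. This term is \emph{not} small in any norm controlled by $\|B_t-B_0\|_*$, so it cannot be absorbed into a Neumann series; it is only \emph{compact} (via Proposition~\ref{prop:cptresolvents} and some operational-calculus work the paper carries out in detail). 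The paper therefore abandons the contraction route entirely and argues directly that $\nabla u \mapsto (E_0^+ + E_0^0)f_0$ is injective semi-Fredholm: small $\|\mE\|_*$ inverts $I - S\mE$, compactness of $\widetilde S\eta'$ (plus an interior Caccioppoli/Poincar\'e estimate) gives closed range, and the Open Mapping Theorem delivers \eqref{eq:Xestrev}. You gesture at this in your final paragraph, but it is the whole argument, not a fallback.

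The second missing ingredient is injectivity itself, which you leave as ``qualitative injectivity (modulo a finite-dimensional obstruction)''. The paper proves it concretely: if $E_0^+f_0 = E_0^0 f_0 = 0$ then $f_0 \in E_0^-\mH$, so one can extend $f$ to a decaying solution on the \emph{exterior} half-cylinder $(-\infty,0)\times M$ via $e^{-t\Lambda}f_0$; applying the divergence theorem on both $\Omega$ and the exterior, and matching the boundary terms, forces $\int_\Omega|\nabla u|^2\,d\omega \le 0$ by the accretivity hypotheses \eqref{eq:intaccr}, \eqref{eq:bdyaccr}. This reflection trick is the substance behind injectivity, and nothing in your outline produces it.
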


Our APS solvability estimate for the $\rev W^{-1,2}$ boundary function space is the following.

\begin{Thm}   \label{thm:D}
Let $\Omega$, $\rho$, $M$, $\omega$, $A$ and $B_t$ be as in Section~\ref{sec:bdyop}.
Assume that there exists $t$-independent coefficients $B_0=B_0(x)$
on $M_\rho$,
so that $\|B_t-B_0\|_*<\infty$.
Consider a weak solution $u$ to \eqref{eq:origdivform}, 
with conormal gradient $f$ of $u_\rho$  as in \eqref{eq:conormalgrad}, where
$\nabla u$ belongs to the Hilbert space $\mY\supset L^2(T\Omega,\omega)$,
with norm
\begin{equation}   \label{eq:Ynormdef}
  \|g\|^2_\mY= \int_M\int_0^\delta |\eta_t g(\rho(t,x))|^2\, tdt\,\omega_0 d\mu(x)
  +
  \int_\Omega |(1-\eta) g|^2 \omega d\nu.
\end{equation}
Then the trace $\nabla u|_{\bdy \Omega}$ exists in the sense that
\begin{equation}   \label{eq:Wminoneconv}
   \lim_{t\to 0}\|f_t-f_0\|_{\rev W^{-1,2}}=0,
\end{equation}
for some $f_0\in \rev W^{-1,2}$ with 
\begin{equation}   \label{eq:Yest}
  \|f_0\|_{\rev W^{-1,2}}\lesssim \|\nabla u\|_\mY.
\end{equation}
If $\|B_t-B_0\|_*$ is small enough, for some $\delta>0$, 
depending on $[\omega_0]_{A_2(M)}$, $\|A|_M/\omega_0\|_\infty$ and the
constant in \eqref{eq:bdyaccr}, then
then we also have the estimate
\begin{equation}   \label{eq:Yestrev}
  \|\nabla u\|_\mY\lesssim \|E_0^+ f_0\|_{\rev W^{-1,2}}+ \|E_0^0 f_0\|_{\rev W^{-1,2}},
\end{equation}
with the implicit constant also depending on the constant in 
\eqref{eq:intaccr}.
\end{Thm}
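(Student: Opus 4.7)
My plan is to mirror the proof of Theorem~\ref{thm:N} but transplant the analysis from $\rev L^2$ to the dual trace space $\rev W^{-1,2}$ using the isomorphism $\Lambda : \mH \to \mH^{-1}$ furnished by Lemma~\ref{lem:HtoHminone}. The boundary part of the $\mY$-norm is naturally a square function: for $g_0 \in \mH$, the quadratic estimate in Proposition~\ref{prop:QEforbdyop} applied to the symbol $\lambda \mapsto e^{-|\lambda|} \in H^\infty$ (via $\psi$) yields $\int_0^\infty \|t\Lambda e^{-t\Lambda} g_0\|^2 \tfrac{dt}{t} \eqsim \|g_0\|^2$, which under the substitution $f_0 = \Lambda g_0 \in \mH^{-1}$ becomes $\int_0^\infty \|e^{-t\Lambda} f_0\|^2 t\, dt \eqsim \|f_0\|_{\mH^{-1}}^2$. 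This matches the boundary part of $\|\nabla u\|_\mY^2$ whenever $f_t = e^{-t\Lambda} f_0$, establishing the abstract equivalence in the unperturbed case. The constant component $E_0^0 f_0$ lives in the finite-dimensional subspace from Lemma~\ref{lem:e0} and is treated separately via the locally constant representatives identified there.

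The concrete reduction is to rewrite the Cauchy--Riemann system as the perturbed evolution $\partial_t f + D_0 B_0 f = -D_0(B_t - B_0) f$, whose mild solutions satisfy a Duhamel representation
\begin{equation*}
f_t = e^{-t\Lambda} E_0^+ f_0 + E_0^0 f_0 + (S_+ h)_t - (S_- h)_t + R_t,
\end{equation*}
where $h_s = (B_s - B_0) f_s$, $(S_\pm h)_t$ are the causal/anti-causal convolutions with the semigroup kernel $e^{-|t-s|\Lambda} E_0^\pm D_0$ (the derivative $D_0$ is absorbed onto $\Lambda$ using $D_0 = \Lambda \,\text{sgn}(D_0 B_0)$), and $R_t$ is a remainder supported near $t = \delta$ arising from the cut-off $\eta_t$ which transfers to the interior $L^2(T\Omega,\omega)$ contribution.

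For the forward estimate~\eqref{eq:Yest}, I would first use the equation and a direct energy/testing argument on $M_\rho$ to show that $f_t \to f_0$ in $\rev W^{-1,2}$ along any sequence $t_k \to 0$, exploiting strong continuity of $e^{-t\Lambda}$ on $\mH^{-1}$. The resulting trace map from the Banach space of weak solutions with $\|\nabla u\|_\mY < \infty$ (quotiented by constants where needed) into $\rev W^{-1,2}$ is closed, and the quantitative bound~\eqref{eq:Yest} follows by the Open Mapping Theorem, as outlined in the introduction. For the reverse estimate~\eqref{eq:Yestrev}, the Duhamel formula above becomes a fixed-point map $\Phi$ on the $\mY$-ball; the quadratic estimate bounds $S_\pm$ as operators from the $t\, dt$-weighted $\mH^{-1}$-valued $L^2$ space to itself, and the weighted Carleson embedding $\int_0^\delta \int_M |B_t - B_0|^2 |f_t|^2\, t\, dt\, d\omega_0 \lesssim \|B_t - B_0\|_*^2 \|\widetilde N_* f\|_{L^2(M,\omega_0)}^2$ transfers the perturbation into $\|B - B_0\|_*$; smallness of this Carleson norm makes $\Phi$ a strict contraction, producing $f$ with $\|\nabla u\|_\mY$ controlled by the data norms.

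The principal obstacle is justifying the Duhamel formula in $\rev W^{-1,2}$, since $D_0(B - B_0)$ does not act boundedly on $\mH^{-1}$; one must systematically pair $D_0$ against $\Lambda^{-1}$ inside the semigroup kernel (which is bounded on $\mH$) before testing against $\mH$-valued integrands. A secondary delicate point is the joining of the cylinder estimate to the interior piece at $t = \delta$: the cut-off $\eta_t$ introduces a commutator term that must be absorbed into $\int_\Omega |(1-\eta)\nabla u|^2\omega\, d\nu$ via Caccioppoli and~\eqref{eq:intaccr}, and the finite-dimensional $E_0^0$-component must be handled with a compactness/Fredholm argument analogous to Proposition~\ref{prop:cptresolvents} to accommodate the arbitrary topology of $\Omega$.
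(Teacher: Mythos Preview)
Your overall architecture is close to the paper's: the integral representation \eqref{eq:postintegration}, the use of $\Lambda:\mH\to\mH^{-1}$ to write $e^{-t\Lambda}h^+=\Lambda e^{-t\Lambda}\tilde h^+$ with $\tilde h^+\in E_0^+\mH$, and the square-function equivalence $\int_0^\delta\|\Lambda e^{-t\Lambda}\tilde h^+\|^2\, t\,dt\eqsim\|\tilde h^+\|_{\rev L^2}^2=\|h^+\|_{\rev W^{-1,2}}^2$ are exactly what the paper uses. The forward bound \eqref{eq:Yest} follows \emph{directly} from these ingredients together with the $\mY$-boundedness of $S\mE$ and the $\rev W^{-1,2}$ continuity of the trace of the $S\mE$ term (as in \cite[Prop.~7.2]{AA1}); there is no need to invoke the Open Mapping or Closed Graph Theorem for the forward direction, and doing so is awkward since you have not yet shown the trace map is everywhere defined.

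There are two genuine gaps in your treatment of the reverse estimate \eqref{eq:Yestrev}. First, the Carleson embedding you wrote, $\int |B_t-B_0|^2|f_t|^2\, t\,dt\,d\omega_0\lesssim\|B-B_0\|_*^2\|\widetilde N_* f\|^2$, bounds the perturbation by the $\mX$-norm, not the $\mY$-norm; in the $\mY$ setting one needs the boundedness of $S\mE$ on the square-function space itself (\cite[Prop.~7.1]{AA1}), which is a different estimate. Second, and more seriously, your contraction scheme cannot deliver injectivity of the map $\nabla u\mapsto (E_0^++E_0^0)f_0$: the term $\widetilde S(\eta' f)$ (your $R_t$) is compact rather than small, so a pure fixed-point argument does not close, and the reverse estimate is instead obtained by showing this map is injective semi-Fredholm and invoking the Open Mapping Theorem. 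Closed range uses compactness of $\widetilde S$ plus Caccioppoli in the interior, roughly as you outline. But injectivity requires a non-trivial bootstrap that you are missing: if $E_0^+f_0=E_0^0f_0=0$ with $\nabla u\in\mY$, then \eqref{eq:postintegration} reduces to $(I-S\mE)(\eta f)=\widetilde S(\eta' f)$, and one observes that the right-hand side actually lies in $\mX$ (by the operational-calculus compactness estimates in part (iii) of the proof of Theorem~\ref{thm:N}); since $S\mE$ has small norm on both $\mX$ and $\mY$, a Neumann series then upgrades $\nabla u$ to $\mX$, whence the Green-identity argument \eqref{eq:Green} from Theorem~\ref{thm:N} gives $\nabla u=0$. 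Without this $\mY\to\mX$ regularity gain you have no mechanism to conclude vanishing from $\rev W^{-1,2}$ data alone.
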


We recall that in both Theorems~\ref{thm:N} and \ref{thm:D}, the norm
of the finite dimensional part $E_0^0 f_0$ is equivalent to the sum
of the integrals of the conormal derivative over the connected components
of $M$, by Lemma~\ref{lem:e0}.

The main computation for the proofs of Theorem~\ref{thm:N} and \ref{thm:D}
is the following.
Consider a conormal gradient $f\in L^2_\loc((0,\delta), \rev L^2)$
solving \eqref{eq:firstCR}, which we multiply by $\eta_t$ and rewrite as
\begin{equation}   \label{eq:preintegration}
  (\pd_t + D_0B_0)(\eta_t f_t)= D_0B_0\mE_t\eta_t f_t+ \eta_t' f_t, \qquad
  0<t<\delta,
\end{equation}
with the Carleson multiplier $\mE_t= B_0^{-1}(B_0-B_t)$.
(Our accretivity assumptions do in general only imply the existence of $B_0^{-1}$ on $\mH$.
This technical issue is fixed as in \cite[Eqn.~(22)]{AA1}.)
We integrate \eqref{eq:preintegration} using operators in the $H^\infty$ functional
calculus of $D_0B_0$, bounds of which are supplied by Proposition~\ref{prop:QEforbdyop}.
Splitting \eqref{eq:preintegration} with the projections $E_0^+$, $E^-_0$
and $E^0_0$, we get 
\begin{align*}
  (\pd_t + \Lambda)(\eta_t E^+_0 f_t) &= \Lambda E_0^+\mE_t\eta_t f_t+ \eta_t' E^+_0 f_t, \\
   (\pd_t - \Lambda)(\eta_t E^-_0 f_t) &= -\Lambda E_0^-\mE_t\eta_t f_t+ \eta_t' E^-_0 f_t, \\
   \pd_t(\eta_t E^0_0 f_t) &= \eta_t' E^0_0 f_t.
\end{align*}
Integrating each of these equation and adding the obtained 
integral equations yields
\begin{equation}   \label{eq:postintegration}
  \eta_t f_t= e^{-t\Lambda} h^+ + S(\mE_t \eta_t f_t) + \widetilde S(\eta_t' f_t)+\eta_t E^0_0f_t, \qquad
  0<t<\delta.
\end{equation}
See the proof of \cite[Thm.~8.2]{AA1} for the detailed calculations
for the $E_0^\pm$ equations. In view of Lemma~\ref{lem:e0}, the addition of the $E_0^0$ equation is a trivial modification.

In the third term, $\eta' f$ is supported on $\delta/2\le t\le \delta$, and we
show below compactness properties of the operator
$$
  \widetilde S g_t= \int_0^t e^{-(t-s)\Lambda} E_0^+ g_s ds
  - \int_t^\delta e^{-(s-t)\Lambda} E_0^- g_s ds, \qquad
  0<t<\delta.
$$
The second term accounts for the $t$-variation of the coefficents and
uses the operator
$$
  Sg_t= \int_0^t \Lambda e^{-(t-s)\Lambda} E_0^+ g_s ds
  + \int_t^\delta \Lambda e^{-(s-t)\Lambda} E_0^- g_s ds,
  \qquad
  0<t<\delta.
$$
As in \cite[Prop.~7.1]{AA1}, Proposition~\ref{prop:QEforbdyop} implies 
boundedness of $S\mE$ on both $\mX$ and $\mY$, with norm bounded
by $\|\mE\|_*$.
In the first term, we obtain the existence of the boundary function $h^+$ on
$M$, belong to the range of $E_0^+$, by a limiting argument in the equation \eqref{eq:postintegration}.
See \cite[Thm.~8.2]{AA1} for the space $\mX$ and 
\cite[Thm.~9.2]{AA1} for the space $\mY$.
In the fourth term we recall that $E^0_0 f_t=E^0_0 f_0$ is independent of
$t$ and represents a finite rank operator, by Lemma~\ref{lem:e0}.

Letting $t=0$ in \eqref{eq:postintegration}, we note the formula
\begin{equation}   \label{eq:tracesplitrepr}
    f_0= h^+ + E_0^-\int_0^\delta e^{-s\Lambda} (\Lambda\mE_s\eta_s f_s-
    \eta'_s f_s) ds+ E_0^0 f_0
\end{equation}
for the full Cauchy trace $f_0$. Since $h^+\in\Ran(E_0^+)$, this
gives the splitting \eqref{eq:f0split} of $f_0$.

\begin{proof}[Proof of Theorem~\ref{thm:N}]
(i)  Consider a weak solution $u$ with $\nabla u\in\mX$, so that $\|\widetilde N_*(\eta f)\|<\infty$.
  To prove the existence of the trace $f_0$, we use \eqref{eq:postintegration}.
  The first and third terms converge in $\rev L^2$ as $t\to 0$, as in
  \cite[Prop.~6.4]{AA1}.
  The second term is seen to converge in the average $L^2$ sense
  \eqref{eq:Diniconv}
  as in part (iii) of the proof of \cite[Thm.~8.2]{AA1}.
  The estimate $\|f_0\|\lesssim \|\nabla u\|_\mX$ follows as in that
  proof.
  We conclude that there is a well defined trace $f_0$ satisfying
  \eqref{eq:tracesplitrepr} in $\rev L^2$.
  
  To prove the reverse estimate \eqref{eq:Xestrev}, 
  by the Open Mapping Theorem it suffices 
  to show that $\nabla u\mapsto (E_0^+ +E_0^0)f_0$ is an injective semi-Fredholm
  operator, as a bounded map from the closed subspace of $\mX$ consisting
  of gradients of solutions, to $\rev L^2$, which we prove next.
  
(ii) To prove injectivity, we define a cutoff function
$\hat\eta_\epsilon= \max(0,\min(1,t/\epsilon-1))$ on $M_\rho$
and extend by $\eta_\epsilon=1$ to the interior of $\Omega$.
For a weak solution to $\div A \nabla u=0$, we have
$
 \int_\Omega\scl{A\nabla u}{\nabla u}\hat\eta_\epsilon d\nu
  = -\epsilon^{-1} \int_\epsilon^{2\epsilon}\int_{M} (\pd_{\nu_{A_\rho}} u_\rho) \rev u_\rho d\mu dt.$
Letting $\epsilon\to 0$ and taking real parts, accretivity \eqref{eq:intaccr} shows that
\begin{equation}    \label{eq:Green}
 \int_\Omega |\nabla u|^2 d\omega\lesssim
  -\re\int_M (\pd_{\nu_{A_0}}u_\rho) \rev u_0 d\mu
\end{equation}
if $f_0= \begin{bmatrix}\omega_0^{-1}\pd_{\nu_{A_0}}u_\rho|_M \\ \nabla_M u_0\end{bmatrix}$.
This uses \eqref{eq:Diniconv} (which we established in (i) above) and $C([0,\delta),\rev L^2)$ continuity
for $u_t$, which is proved as in \cite[Prop.~7.2]{AA1}.

Assuming that $E_0^+f_0=0$ and $E_0^0 f_0=0$, 
we have $f_0= E_0^-f_0$.
Defining $f_t= e^{-t\Lambda}f_0$ for $t<0$, it follows that $\pd_t f_t+ D_0B_0 f_t =0$ for $t<0$
and that $f_t$ is the conormal gradient of a solution $u^-$ on $(-\infty, 0)\times M$,
with $L^2$ limit $f_0$ as $t\to 0^-$.
By suitable choices of constants of integration, for each connected component $M_i$, we may also assume
that $u^-= u_0$ at $t=0$.
It follows from Proposition~\ref{prop:cptresolvents} that 
$\|f_t\|\lesssim |t|^{-N}$ for any $N<\infty$, as $t\to -\infty$.
Another application of the Divergence Theorem shows that
\begin{equation}    \label{eq:extGreen}
  \re\int_M (\pd_{\nu_{A_0}} u^-) \rev{u^-} d\mu= \re\int_{-\infty}^0\int_M
  \scl{A_0\nabla u^-}{\nabla u^-} d\mu dt\ge 0,
\end{equation}
where the inequality follows from accretivity \eqref{eq:bdyaccr}.
Matching the right hand side in \eqref{eq:Green} and the
left hand side in \eqref{eq:extGreen}, the continuity at $t=0$ shows that
$\int_\Omega |\nabla u|^2 d\omega\le 0$, and
it follows that $\nabla u=0$ in $\Omega$.

(iii) To show that the range of $\nabla u\mapsto (E_0^+ +E_0^0)f_0$ 
is closed, we note
from \eqref{eq:postintegration} that
\begin{equation}  \label{eq:mainXest}
 \|1_{(0,\delta)}(I-S\mE_t)(\eta_t f_t)\|_\mX \le
 \|1_{(0,\delta)}e^{-t\Lambda}h^+\|_\mX + 
 \|1_{(0,\delta)}\widetilde S(\eta_t' f_t)\|_\mX
 +\|\eta_t E^0_0f_t\|_\mX.
\end{equation}
If $\|\mE\|_*$ is small enough, then the left hand side is 
$\gtrsim \|\eta_t f_t\|_\mX$.
Weighted non-tangential maximal estimates, proved as in 
\cite[Thm.~5.2]{AA1} but using weighted estimates 
\cite[Lem.~6.1]{ARR} and Proposition~\ref{prop:QEforbdyop}, 
shows that the first
term on the right hand side is
$\lesssim \|h^+\|= \|E_0^+f_0\|$. 
The last term is a finite rank operator, as discussed above, 
and is $\lesssim \|E^0_0 f_0\|$.
For the second term, we claim that 
$$
  1_{(0,\delta)}\widetilde S\eta_t' = 1_{(0,\delta/4)}\widetilde S\eta_t' + 1_{(\delta/4,\delta)}\widetilde S\eta_t'
  =I+II 
$$
is a compact operator on $\mX$.
For II, it suffices to show that
$$
  f_t\mapsto \int_{\delta/4}^t e^{-(t-s)\Lambda} E_0^+ g_s ds
  - \int_t^\delta e^{-(s-t)\Lambda} E_0^- g_s ds
$$
is compact on $L^2((\delta/4,\delta), \mH)$.
This follows from Proposition~\ref{prop:cptresolvents} by operational calculus
as in \cite[Prop.~6.4]{AA1}, since the symbols
$F(\lambda): L^2((\delta/4,\delta), \C)\to L^2((\delta/4,\delta), \C)$
(replacing $\Lambda$ by $\lambda\in\C$) are Hilbert--Schmidt
operators on $L^2((\delta/4,\delta), \C)$ with 
$\lim_{\lambda\to\infty}\|F(\lambda)\|=0$.
For I, we use the support of $\eta_t'$ and compute
\begin{multline*}
  \int_{\delta/2}^\delta e^{-(s-t)\Lambda} E_0^- g_s ds
  \\= \int_{\delta/2}^\delta e^{-(s-t)\Lambda}(I-e^{-2t\Lambda}) E_0^- g_s ds
  + e^{-t\Lambda}\int_{\delta/2}^\delta e^{-s\Lambda} E_0^- g_s ds
  = I_1+I_2.
\end{multline*}
For the first term, since $\mX\supset L^2((0,\delta/4), t^{-1}dt,\mH)$, 
it suffices to show that 
$I_1: L^2((\delta/2,\delta),\mH)\to L^2((0,\delta/4), t^{-1}dt,\mH)$
is compact.
Estimating $|e^{-(s-t)\lambda}(1-e^{-2t\lambda})|
\lesssim t|\lambda|e^{-(\delta/4)\re\lambda}$, this follows
by operational calculus as for $II$.
For $I_2$ we write
$$
  I_2= e^{-t\Lambda}e^{-(\delta/4)\Lambda}
  \int_{\delta/2}^\delta  e^{-(s-\delta/4)\Lambda} E_0^- g_s ds,
$$
where the left factor is bounded $\mH\to \mX$,
the middle factor is compact $\mH\to \mH$
as a consequence of Proposition~\ref{prop:cptresolvents}, and the right factor
is bounded $L^2((\delta/2,\delta),\mH)\to \mH$.

(iv)
Summarizing, we obtain from \eqref{eq:mainXest}
the estimate
\begin{equation}   \label{eq:bdylowerbound}
  \|\eta_t f_t\|_\mX\lesssim \|E_0^+ f_0\|+ \|Kf_t\|_\mX
\end{equation}
with a compact operator $K$.
To estimate $\nabla u$ on the interior of $\Omega$, we use the 
Caccioppoli inequality
$$
  \int_\Omega |\nabla u|^2 \tilde \eta^2 d\omega \lesssim
   \int_\Omega |u|^2 |\nabla\tilde \eta|^2 d\omega,
$$
where
$\tilde\eta\in C^\infty_c(\Omega)$
with $\tilde \eta=1$ on $\Omega\setminus\rho((0,\delta/2)\times M)$.
Applying weighted Poincar\'e inequalities
\cite[Thm.~1.5]{FKS} to $u$ on $\supp\nabla\tilde \eta$,
and combining with \eqref{eq:bdylowerbound},
proves a lower estimate $\|\nabla u\|_\mX\lesssim \|E_0^+ f_0\|+ \|K(\nabla u)\|_\mX$,
with a compact operator $K$,
in the case when $\supp\nabla\tilde \eta$ is connected.
The case when $\supp\nabla\tilde \eta$ is disconnected does not cause
a problem, since the difference between the mean values of $u$
on different components can be added as a finite rank operator to $K$.
This implies that the range of $\nabla u\mapsto (E_0^++E_0^0) f_0$ is closed,
see \cite[Thm.~IV.5.26]{K},
and completes the proof of Theorem~\ref{thm:N}.
\end{proof}

\begin{proof}[Proof of Theorem~\ref{thm:D}]

(i) Consider a weak solution $u$ with $\nabla u\in\mY$, so that 
$\int_0^\delta\|\eta_t f_t\|^2 tdt <\infty$.
Similar to \cite[Thm.~8.2]{AA1}, we write the first term in
   \eqref{eq:postintegration} as
$$   
     e^{-t\Lambda} h^+= \Lambda e^{-t\Lambda} \tilde h^+,
$$
with $\tilde h^+\in E_0^+ \mH$ and 
$h^+=\Lambda \tilde h^+\in\mH^{-1}\subset \rev W^{-1,2}$,
using Lemma~\ref{lem:HtoHminone}. 
Existence of the trace $f_0$ 
and the estimate $\|f_0\|_{\rev W^{-1,2}}\lesssim \|\nabla u\|_\mY$
follows from strong convergence $e^{-t\Lambda}\to I$ on $\rev L^2$, Lemma~\ref{lem:HtoHminone}
and estimating as in \cite[Prop.~7.2]{AA1} for the second term in \eqref{eq:postintegration}.
Note that the estimate of the trace of $E_0^0 f_t$ is trivial,
since it is a locally constant function on $M_\rho$
and $\mY\subset L^2_\loc((0,\delta),\rev L^2)$. 
  We conclude that there is a well defined trace $f_0$ satisfying
  \eqref{eq:tracesplitrepr} in $\rev W^{-1,2}$.
  
  To prove the reverse estimate \eqref{eq:Yestrev}, it suffices 
  to show that $\nabla u\mapsto (E_0^++E_0^0)f_0$ is an injective semi-Fredholm
  operator, as a bounded map from the closed subspace of $\mY$ consisting
  of gradients of solutions, to $\rev W^{-1,2}$.
  
  (ii)
To prove injectivity, assume that $\nabla u\in \mY$ and $E_0^+f_0=0$, 
 $E_0^0 f_0=0$.
It follows from \eqref{eq:postintegration} that
$$
 1_{(0,\delta)}(I-S\mE_t)(\eta_t f_t)=1_{(0,\delta)}\widetilde S(\eta_t' f_t).
$$
It follows from step (iii) in the proof of Theorem~\ref{thm:N}
that in fact the right hand side belongs to $\mX$.
By hypothesis $1_{(0,\delta)}S\mE_t 1_{(0,\delta)}$
is bounded with small norm on both $\mY$ and on $\mX$, where 
$\mX\subset\mY$.
Using a Neumann series for the inverse shows that in fact $\nabla u\in \mX$,
and $\nabla u=0$ now follows as in step (ii) in the proof of 
Theorem~\ref{thm:N}.
  
(iii)
To prove that the range is closed, we note that
\begin{equation} 
 \|1_{(0,\delta)}(I-S\mE_t)(\eta_t f_t)\|_\mY \le
 \|1_{(0,\delta)}\Lambda e^{-t\Lambda}\tilde h^+\|_\mY + 
 \|1_{(0,\delta)}\widetilde S(\eta_t' f_t)\|_\mY
 +\|\eta_t E^0_0f_t\|_\mY.
\end{equation}
If $\|\mE\|_*$ is small enough, then the left hand side is 
$\gtrsim \|\eta_t f_t\|_\mY$.
Quadratic estimates, proved as in 
\cite[Thm.~5.2]{AA1} but building on Proposition~\ref{prop:QEforbdyop}, 
show that the first term on the right hand side is
$\lesssim \|\tilde h^+\|= \|E_0^+f_0\|_{\rev W^{-1,2}}$,
and the last term is a finite rank operator.
The second term maps
$L^2((\delta/2,\delta),\mH)\to L^2((0,\delta),\mH)\subset\mY$,
where the inclusion is continuous, and the $L^2$ map is shown
to be compact, arguing as for terms $II$ and $I_1$ in the 
proof of  Theorem~\ref{thm:N}.
Estimating $u$ on the interior of $\Omega$ as in part (iv) of that proof,
it follows that the range of $\nabla u\mapsto (E_0^++ E_0^0) f_0$ 
is closed.
\end{proof}

\begin{proof}[Proof of Theorem~\ref{thm:ND}]
The estimate \eqref{eq:Xest0} is the same as \eqref{eq:Xest},
which has been proved above.
The estimate \eqref{eq:Yest0} is the same as \eqref{eq:Yest},
which has been proved above.
In Theorem~\ref{thm:N}, the reverse estimate
$\|\nabla u\|_\mX\lesssim \|E_0^+ f_0\|+ \|E_0^0 f_0\|$ was also
proved.
In Theorem~\ref{thm:D}, the corresponding $\rev W^{-1,2}$ estimate
was shown.
It remains only to note that
$\|E_0^+ f_0\|+ \|E_0^0 f_0\|\lesssim \|f_0\|$, and the corresponding
estimate for $\rev W^{-1,2}$ norms, which hold due to the 
boundedness of the operators and Lemma~\ref{lem:e0}.
\end{proof}

\subsection{Neumann and Dirichlet conditions}   \label{sec:dirneu}

In this section we further assume that 
the coefficients $A_\rho$ have trace $A_0$ which  
is $L^\infty$ close to a self-adjoint coefficient matrix, 
and we prove Theorem~\ref{thm:dirneu}.
The main ingredient in the proof is the following Rellich identity.

\begin{Lem}  \label{lem:rellich}
   Consider the operator $D_0\widetilde B$ acting in $\rev L^2$ on
   $M$, where $\widetilde B\in L^\infty(\End(\C\oplus TM))$ is accretive
   on $\ran(D_0)$, see \eqref{eq:acc}, and $\widetilde B$ is the transform
  \eqref{eq:defntranformedB} of self-adjoint coefficients $\tilde A$.
  Then we have the Rellich identity
  $$
  \int_M \scl{h_\no}{(\widetilde B h)_\no} d\omega_0
  = \int_M \scl{h_\ta}{(\widetilde B h)_\ta} d\omega_0
  $$
  for all $h= \begin{bmatrix} h_\no \\ h_\ta \end{bmatrix}
  \in \chi^+(D_0\widetilde B)\rev L^2$.
  The same identity holds also for all 
  $h \in \chi^-(D_0\widetilde B)\rev L^2$.
\end{Lem}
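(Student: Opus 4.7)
The plan is to encode the self-adjointness of $\tilde A$ as an algebraic symmetry of the transformed coefficients $\widetilde B$, and then exploit this symmetry via a Poisson-semigroup conservation argument on the spectral subspaces of $D_0\widetilde B$.

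First I would record the algebra on $\rev L^2$. Let $N = \begin{bmatrix} I & 0 \\ 0 & -I \end{bmatrix}$. A direct block computation from \eqref{eq:defntranformedB}, using $a=a^*$, $d=d^*$ and $c=b^*$, yields the Rellich-type identity
\[
  \widetilde B = N\widetilde B^* N, \qquad \text{equivalently} \qquad N\widetilde B = \widetilde B^* N.
\]
On the other hand, the block form of $D_0$ combined with Lemma~\ref{lem:ddeladj} gives $D_0 = D_0^*$ and $ND_0 = -D_0 N$. Introduce the sesquilinear pairing
\[
  [u,v]_N := \langle \widetilde B u, N v \rangle_{\rev L^2}, \qquad u,v\in \rev L^2,
\]
which is Hermitian thanks to $N\widetilde B = \widetilde B^* N$. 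Combining the three identities above yields the key anti-Hermitian property
\[
  [D_0\widetilde B u, v]_N + [u, D_0\widetilde B v]_N = 0, \qquad u,v \in \Dom(D_0\widetilde B).
\]

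Next I would run the semigroup argument. Fix $h \in \chi^+(D_0\widetilde B)\rev L^2 \subset \mH$ and set $f_t := e^{-t\Lambda} h$ for $t\geq 0$. On the positive spectral subspace $\Lambda$ agrees with $D_0\widetilde B$, so $\partial_t f_t = -D_0\widetilde B f_t$, and differentiating $t\mapsto [f_t,f_t]_N$ and applying the anti-Hermitian identity gives $\partial_t [f_t,f_t]_N = 0$; thus this quantity is constant in $t$. By Proposition~\ref{prop:cptresolvents}, the range $\mH = \ran(D_0\widetilde B)$ is closed, so $\Lambda$ is bounded below on $\chi^+(D_0\widetilde B)\rev L^2$ and $\|f_t\|_{\rev L^2}\to 0$ as $t\to\infty$. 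Hence $[h,h]_N = \lim_{t\to\infty} [f_t,f_t]_N = 0$, and unpacking this with $N = \mathrm{diag}(I,-I)$ and taking complex conjugates produces exactly the stated identity. For $h \in \chi^-(D_0\widetilde B)\rev L^2$ the argument is identical with $f_t := e^{t\Lambda} h$ for $t\leq 0$.

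The main obstacle is the opening algebraic step: verifying $\widetilde B = N\widetilde B^* N$ directly from the Schur-complement formula \eqref{eq:defntranformedB} and then tracking the signs cleanly through $ND_0 = -D_0 N$ and $N\widetilde B = \widetilde B^* N$ to reach the anti-Hermitian identity without error. Once that is in place, the semigroup conservation-plus-decay step is essentially standard and uses only the closed range afforded by Proposition~\ref{prop:cptresolvents}.
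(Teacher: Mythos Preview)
Your proof is correct and follows essentially the same route as the paper: both introduce the reflection $N=\mathrm{diag}(\pm I,\mp I)$, use the anticommutation $ND_0=-D_0N$ together with the symmetry $\widetilde B^*=N\widetilde B N$ coming from self-adjointness of $\tilde A$, and then run a Poisson-semigroup conservation argument on $e^{-t\Lambda}h$ with decay at $t\to\infty$ supplied by the closed range of $D_0\widetilde B$ on $\mH$. Your packaging via the Hermitian form $[u,v]_N=\langle \widetilde B u, Nv\rangle$ and its anti-Hermitian behaviour under $D_0\widetilde B$ is a clean way to organise the same computation the paper carries out directly on $\langle Nh_t,\widetilde B h_t\rangle$.
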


\begin{proof}
  Introduce the auxiliary operator 
  $N=\begin{bmatrix} -1 & 0 \\ 0 & I \end{bmatrix}$ and
  note that $ND_0+D_0N=0$.
  Furthermore we observe that the self-adjointness of $\tilde A$
  translates to $\widetilde B^*= N\widetilde B N$ with
  \eqref{eq:defntranformedB}.
  Define the Cauchy extension $h_t= e^{-t\Lambda} h$, $t>0$, of 
  $h$ on the half-cylinder $(0,\infty)\times M$.
  Since   $h \in \chi^+(D_0\widetilde B)\rev L^2$,   we have
$$
  \pd_t h_t= - \Lambda h_t= -D_0\widetilde B h_t
$$  
and $h_t\to h$ in $\rev L^2$ as $t\to 0$.
Proposition~\ref{prop:cptresolvents} and the boundedness of the $H^\infty$ functional calculus
yields $\|h_t\|\lesssim\|(D_0\widetilde B)^Ne^{-t\Lambda} h\|
=t^N\|(tD_0\widetilde B)^Ne^{-t\Lambda} h\|\lesssim t^{-N}$ for any $N<\infty$
as $t\to\infty$.
We obtain
 \begin{align*}
\scl{Nh}{\widetilde Bh}_{\rev L^2}
& = -\int_0^\infty \pd_t \scl{Nh_t}{\widetilde Bh_t}_{\rev L^2} dt\\
& = \int_0^\infty \Big( \scl{N(D_0\widetilde Bh_t)}{\widetilde Bh_t}_{\rev L^2} + \scl{Nh_t}{\widetilde B(D_0\widetilde Bh_t)}_{\rev L^2} \Big) dt \\
& = \int_0^\infty \scl{(ND_0\widetilde B+D_0\widetilde B^*N)h_t}{\widetilde Bh_t}_{\rev L^2}  dt
=0,
\end{align*} 
from which the stated Rellich identity follows.

The proof for $h \in \chi^-(D_0\widetilde B)\rev L^2$ is similar,
using $h_t$ on the lower half cylinder $t<0$.
\end{proof}

\begin{Def}
Define the subspaces 
\begin{align*}
N^+\mH = \{0\} \oplus \ran(\nabla_M), \quad
N^-\mH = \ran(\div_{M,\omega_0}) \oplus \{0\}
\end{align*}
of tangential and normal vector fields on $M$, respectively,
with the projections
\[
N^+f=f_\ta,
\quad
N^-f= f_\no,
\quad
f= \begin{bmatrix}f_\no \\ f_\ta\end{bmatrix}.
\]
\end{Def}

Using Rellich identities, we prove that these subspaces are transversal
to the subspaces $E^\pm_0 \mH$.
More precisely, we have the following.

\begin{Prop}   \label{prop:restrictproj}
Let $\widetilde B$ be the transform of self-adjoint coefficients 
as in Lemma~\ref{lem:rellich}.   Then there exists $\epsilon>0$
such that whenever coefficients $B$ satisfy $\|B-\widetilde B\|_\infty<\epsilon$, 
the four restricted projections  
$$
E_0^\pm \mH\to N^\pm \mH: h\mapsto N^\pm h,
$$
as well as the four restricted projections
$$
  N^\pm \mH\to E_0^\pm \mH: h\mapsto E_0^\pm h,
$$
are all invertible isomorphisms. Here $E_0^\pm= \chi^\pm(D_0B)$.
The same hold if $\mH\subset \rev L^2$ is replaced by 
$\mH^{-1}\subset\rev W^{-1,2}$.
\end{Prop}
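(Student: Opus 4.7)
The plan is to establish the proposition first for $B=\widetilde B$ and then extend to nearby $B$ by perturbation of the functional calculus. I treat the $\rev L^2$ case; the $\rev W^{-1,2}$ case is obtained by transferring through the isomorphism $\Lambda:\mH\to\mH^{-1}$ of Lemma~\ref{lem:HtoHminone}, using that $\rev W^{-1,2}$ already carries its own normal/tangential splitting defining the extensions of $N^\pm$.

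First, I derive Rellich lower bounds at $B=\widetilde B$. For $h\in E_0^\pm(\widetilde B)\mH$, accretivity of $\widetilde B$ on $\mH$ gives $\|h\|^2\lesssim\re\scl{\widetilde Bh}{h}$, while Lemma~\ref{lem:rellich} asserts $\re\scl{(\widetilde Bh)_\no}{h_\no}=\re\scl{(\widetilde Bh)_\ta}{h_\ta}$. Combined with the obvious splitting $\re\scl{\widetilde Bh}{h}=\re\scl{(\widetilde Bh)_\no}{h_\no}+\re\scl{(\widetilde Bh)_\ta}{h_\ta}$ and Cauchy--Schwarz, this gives $\|h\|\lesssim\|h_\no\|=\|N^-h\|$ and, symmetrically, $\|h\|\lesssim\|N^+h\|$. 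In particular the four restrictions $N^\pm|_{E_0^\pm(\widetilde B)\mH}$ are injective with closed range.

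For surjectivity at $B=\widetilde B$, I would run a Fredholm index argument along the straight-line path $\tilde A_s=(1-s)\omega_\rho I+s\tilde A$, $s\in[0,1]$, of self-adjoint accretive coefficient matrices on $M$, which preserves accretivity uniformly by convexity. The associated transforms $\widetilde B_s$ depend continuously on $s$ and each still satisfies Lemma~\ref{lem:rellich}, so the Rellich lower bound above applies uniformly in $s$. Since the spectral projections $E_0^\pm(\widetilde B_s)$ depend continuously on $s$ in operator norm (by perturbation theory for the $H^\infty$ calculus), the restrictions $N^\pm|_{E_0^\pm(\widetilde B_s)\mH}$ form a norm-continuous family of semi-Fredholm operators with constant Fredholm index. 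At $s=0$ we have $\widetilde B_0=I$ and $D_0$ is self-adjoint, and direct computation of $\sgn(D_0)=D_0|D_0|^{-1}$ shows it is the off-diagonal involution interchanging $N^+\mH$ and $N^-\mH$ via the partial isometries from the polar decomposition of $\nabla_M$; consequently all four restrictions are isomorphisms of index zero. The index is thus zero throughout the path, and combined with the injectivity already established, surjectivity at $s=1$ follows.

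To extend from $\widetilde B$ to nearby $B$, I note the resolvent estimate $\|(I+itD_0B)^{-1}-(I+itD_0\widetilde B)^{-1}\|\lesssim|t|\,\|B-\widetilde B\|_\infty$ as in the proof of Lemma~\ref{lem:bddpert}, which together with the uniform $H^\infty$ bounds from Proposition~\ref{prop:QEforbdyop} transfers to norm-continuity of $\chi^\pm(D_0B)$ in $B$. Since being an isomorphism is an open condition, the four maps $N^\pm|_{E_0^\pm(B)\mH}$ remain isomorphisms provided $\|B-\widetilde B\|_\infty<\epsilon$. The four dual maps $E_0^\pm|_{N^\pm\mH}:N^\pm\mH\to E_0^\pm(B)\mH$ are then isomorphisms by a brief algebraic argument. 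For injectivity of $E_0^+|_{N^+\mH}$: $E_0^+g=0$ with $g\in N^+\mH$ forces $g\in E_0^-\mH\cap N^+\mH$ (using $E_0^+|_\mH+E_0^-|_\mH=I_\mH$, which follows from accretivity of $B$ on $\mH$), and the Rellich bound for $E_0^-\mH$ gives $\|g\|\lesssim\|g_\no\|=0$. For surjectivity, given $h\in E_0^+\mH$, use the already-established iso $N^-|_{E_0^-\mH}$ to pick $h^-\in E_0^-\mH$ with $N^-h^-=-N^-h$; then $h+h^-=[0,g]^T$ with $g=N^+(h+h^-)\in N^+\mH$, so $E_0^+([0,g]^T)=h$. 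The remaining three dual maps are handled analogously.

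The main obstacle is the surjectivity at $B=\widetilde B$: the Rellich identity gives injectivity for free, but promoting it to surjectivity requires the index argument above, which in turn depends on preserving self-adjointness of the underlying coefficient matrix along the homotopy so that Lemma~\ref{lem:rellich} applies uniformly in $s$.
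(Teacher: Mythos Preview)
Your argument in $\mH$ is correct and follows essentially the same route as the paper: Rellich identity for injectivity, a homotopy through self-adjoint coefficients for surjectivity via Fredholm index stability, then perturbation in $B$. The only real difference is the choice of homotopy. You deform $\tilde A$ linearly to $\omega_0 I$, landing at $\widetilde B_0=I$; the paper instead shrinks the off-diagonal blocks, $\tilde A_t=\begin{bmatrix} a & tb\\ tc & d\end{bmatrix}$, landing at a block-diagonal $\widetilde B_0$. Both endpoints yield $N\widetilde B_0=\widetilde B_0 N$, hence $NE_0^\pm=E_0^\mp N$ and the explicit inverses $2E_0^\pm|_{N^\pm\mH}=(N^\pm|_{E_0^\pm\mH})^{-1}$. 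Your algebraic treatment of the four dual maps $E_0^\pm|_{N^\pm\mH}$ is also fine and equivalent to what the paper does in its step~(i).

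The gap is your $\rev W^{-1,2}$ paragraph. Saying that one ``transfers through $\Lambda:\mH\to\mH^{-1}$'' is not enough, because $\Lambda$ does not commute with $N^\pm$, and the $N^\pm$ on $\rev W^{-1,2}$ are defined independently of $\Lambda$. What actually makes the transfer work is the specific structure of $D_0$: it \emph{swaps} normal and tangential parts. Concretely, for $h^+\in E_0^+\mH^{-1}$ write $h^+=D_0\widetilde B\,\tilde h^+$ with $\tilde h^+\in E_0^+\mH$ (on $E_0^+\mH$ one has $\Lambda=D_0\widetilde B$). Then $(h^+)_\ta=-\nabla_M(\widetilde B\tilde h^+)_\no$ and $(h^+)_\no=\div_{M,\omega_0}(\widetilde B\tilde h^+)_\ta$, and by Lemma~\ref{lem:HtoHminone} these give $\|(h^+)_\ta\|_{\rev W^{-1,2}}\eqsim\|(\widetilde B\tilde h^+)_\no\|_{\rev L^2}$ and $\|(h^+)_\no\|_{\rev W^{-1,2}}\eqsim\|(\widetilde B\tilde h^+)_\ta\|_{\rev L^2}$. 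Now the $L^2$ Rellich bound $\|\tilde h^+\|\lesssim\|(\widetilde B\tilde h^+)_\no\|$ (which follows from Lemma~\ref{lem:rellich} and accretivity exactly as you argued, keeping the factor $\widetilde B$) translates into $\|h^+\|_{\rev W^{-1,2}}\lesssim\|(h^+)_\ta\|_{\rev W^{-1,2}}$, and similarly for the other estimate. Without this swap observation your transfer is not justified; once you include it, the rest of your argument (homotopy, index, perturbation) goes through in $\mH^{-1}$ as in $\mH$.
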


\begin{proof}
(i) 
 Consider first transforms of self-adjoint coefficients, that is, $B= \widetilde B$.
 Lemma~\ref{lem:rellich} shows that
  \begin{multline*}
    \|h\|^2\lesssim  
    \re \int_M \scl{h_\no}{(\widetilde B h)_\no} d\omega_0
  + \re \int_M \scl{h_\ta}{(\widetilde B h)_\ta} d\omega_0 \\
  \lesssim \|h_\no\| \|(\widetilde B h)_\no\|
  \lesssim \|h_\no\| \|h\|,
  \end{multline*}
  where the first estimate follows from the accretivity of $\widetilde B$.
  Therefore 
  $\|h\|\lesssim \|h_\no\|$
  for all $h\in E^\pm_0\mH$.
  A similar argument, instead keeping the $\scl{h_\ta}{(\widetilde B h)_\ta}$
  term, proves that $\|h\|\lesssim \|h_\ta\|$.
  This shows that $N^\pm: E_0^\pm \mH\to N^\pm \mH$ are 
  all injective semi-Fredholm operators.
  This entails that the four restricted projections
  $E_0^\pm: N^\pm \mH\to E_0^\pm \mH$ also are 
  injective semi-Fredholm operators.
  We prove this for $E_0^+: N^+ \mH\to E_0^+ \mH$; the proofs
  for the other three maps are similar.
  Let $h\in N^+\mH$ and consider $E_0^- h$. For the map 
  $N^-: E_0^- \mH\to N^- \mH$, we have proved above that
  $$
    \|E_0^- h\|\lesssim \|N^-(E_0^- h)\|.
  $$
  This yields $\|h\|-\|E_0^+ h\|\lesssim \|N^-(E_0^+ h)\|\lesssim \|E_0^+ h\|$, by the reverse triangle inequality and since $E_0^-h= h-E_0^+h$ and $N^- h=0$.
  This estimate proves that $E_0^+: N^+ \mH\to E_0^+ \mH$
  is an injective semi-Fredholm operator.
  
(ii)
We note that the projections $\chi^+(D_0B)$ depend continuously (in fact, analytically) on $B$.
  This follows from quadratic estimates as in \cite[Sec.~6]{AKMc}, and
  allows us to apply perturbation theory for Fredholm operators.
  We reduce to fixed spaces as in \cite[Lem.~4.3]{AAMc2}.
  To show surjectivity of the restricted projections, let $\widetilde B$
  be the transform of self-adjoint coefficients 
  $\tilde A= \begin{bmatrix} a & b \\ c & d \end{bmatrix}$.
  Consider self-adjoint coefficients 
  $\tilde A_t= \begin{bmatrix} a & tb \\ tc & d \end{bmatrix}$
  for $t\in [0,1]$, with transform $\widetilde B_t$.
  For $t=0$, we have explicit inverses for the restricted projections.
  To see this, we write $N^\pm= (I\pm N)/2$, with $N$ as in the proof of 
  Lemma~\ref{lem:rellich}.
  We always have $ND_0+D_0N=0$, but for $B=\widetilde B_0$
  we also have $N\widetilde B_0= \widetilde B_0 N$.
  As a consequence $N E_0^\pm =E_0^\mp N$ when 
  $E_0^\pm= \chi^\pm(D_0\widetilde B_0)$.
  For example, consider $h\in E^+_0\mH$.
  Then 
  $$
  E_0^+N^+ h= \tfrac 12 (E_0^+ + E_0^+ N h)= \tfrac 12(h+N E_0^-h)
    = \tfrac{1}{2}h. 
  $$ 
  This shows that, modulo a factor $2$, the restricted projections
  $E_0^+: N^+ \mH\to E_0^+ \mH$ and
  $N^+: E_0^+ \mH\to N^+ \mH$ are inverse maps.
  Similarly, the other six restricted projections among 
  $N^\pm|_{E_0^\pm\mH}$
  and $E_0^\pm|_{N^\pm\mH}$ are seen to be pairwise
  inverse, modulo a factor $2$.

  By stability of the index for Fredholm operators, it follows
  that all eight restricted projections are invertible also for
  $\widetilde B= \widetilde B_1$.
  By the continuity of $B\mapsto \chi^\pm(D_0 B)$, this continues to
  hold for all non-selfadjoint $B$ such that $\|B-\widetilde B\|_\infty<\epsilon$.
  
(iii) 
   Finally, we prove the result for $\mH^{-1}$.
   By perturbation theory as in (ii), it suffices to show that the eight restricted 
   projections are injective semi-Fredholm maps in the $\mH^{-1}$
   topology, for coefficients $\widetilde B$.
  Given $h^+\in E_0^\pm \mH^{-1}$, we write
  $h^+= D_0\widetilde B \tilde h^+$ with 
  $\tilde h^+\in E_0^+ \mH$.
  From Lemma~\ref{lem:rellich} we obtain the estimate
  $\|\tilde h^+\|_{\rev L^2}\lesssim \|(\widetilde B\tilde h^+)_\no\|_{\rev L^2}$.
  By Lemma~\ref{lem:HtoHminone}, we equivalently have
  $\|h^+\|_{\rev W^{-1,2}}\lesssim \|(h^+)_\ta\|_{\rev W^{-1,2}}$
  since $D_0(\widetilde B\tilde h^+)_\no= (h^+)_\ta$, as  
  $D_0$ swaps normal and tangential parts.
  We also have the estimate
  $\|h^+\|_{\rev W^{-1,2}}\simeq\|\tilde h^+\|_{\rev L^2}\lesssim \|(\widetilde B\tilde h^+)_\ta\|_{\rev L^2}  \eqsim \|(h^+)_\no\|_{\rev W^{-1,2}}$.
  Given these two $\mH^{-1}$ Rellich estimates, invertibility of the 
  eight restricted projections follows as in the space $\mH$.
\end{proof}

\begin{proof}[Proof of Theorem~\ref{thm:dirneu}]
(i) We first establish the Neumann solvability estimate
$\|\nabla u\|_\mX\lesssim
\| \pd_{\nu_{A_\rho}}u_\rho|_M\|_{L^2(M,\omega_0^{-1})}$.
 It suffices 
  to show that $\nabla u\mapsto (f_0)_\no$ is an injective semi-Fredholm
  operator, as a bounded map from the closed subspace of $\mX$ consisting
  of gradients of solutions, to $\rev L^2$, which we prove next.
  Theorem~\ref{thm:N} shows that
  $$
  \|\nabla u\|_\mX\lesssim \|E_0^+ f_0\|+ \|E_0^0 f_0\|.
  $$
  Proposition~\ref{prop:restrictproj} shows that
  $N^-: E_0^+ \mH\to N^-\mH$ is an isomorphism, and hence
  $$
    \|E_0^+ f_0\|\lesssim \|(E_0^+ f_0)_\no\|.
  $$
  From \eqref{eq:postintegration}, where $h^+= E_0^+ f_0$,
  it now follows that 
  $$
    \|(h^+)_\no\|\le  \|(f_0)_\no\|
    + \epsilon \|\eta_t f_t\|_\mX + \|K f_t\|_\mX,
  $$
  with a compact operator $K$ and $\epsilon>0$ small depending on
  $\|A_\rho/\omega_\rho-A_0/\omega_0\|_*$.
  Combining the above three estimates, we obtain 
  $(1-C\epsilon)\|\nabla u\|_\mX\lesssim \|(f_0)_\no\|+  \|K(\nabla u)\|_\mX$,
  which proves that $\nabla u\mapsto (f_0)_\no$ has closed range, provided that $\epsilon<1/C$.
  Injectivity is immediate from \eqref{eq:Green}.
  
(ii)
The proof of the Dirichlet regularity solvability estimate
$$\|\nabla u\|_\mX\lesssim
 \|\nabla_M u_\rho|_M\|_{L^2(TM,\omega_0)}+ 
  \sum_i \Big|\int_{M_i}\pd_{\nu_{A_\rho}}u_\rho d\mu\Big|$$
 is similar.
 It suffices 
  to show that $\nabla u\mapsto (f_0)_\ta+ E_0^0 f_0$ 
  is an injective semi-Fredholm
  operator, as a bounded map from the closed subspace of $\mX$ consisting
  of gradients of solutions, to $\rev L^2$.
  That the range is closed follows as in (i), now using the invertibility of
  $N^+: E_0^+ \mH\to N^+\mH$ shown in
  Proposition~\ref{prop:restrictproj}.
  To see injectivity, we note that $(f_0)_\ta=0$ entails
  that $u_\rho$ is locally constant on $M$ and that $E_0^0 f_0=0$
  entails that
  $\int_{M_i}\pd_{\nu_{A_\rho}} u_\rho d\mu=0$ for each 
  connected component $M_i$ of $M$. 
  Therefore $\nabla u=0$ follows from \eqref{eq:Green}.
  
(iii)
To prove the Dirichlet solvability estimate
$$
\|\nabla u\|_\mY\lesssim  \|\nabla_M u_\rho|_M\|_{W^{-1,2}(TM,\omega_0)}
+  \sum_i \Big|\int_{M_i}\pd_{\nu_{A_\rho}}u_\rho d\mu\Big|,
$$
 it suffices 
  to show that $\nabla u\mapsto (f_0)_\ta+ E_0^0 f_0$ 
  is an injective semi-Fredholm
  operator, as a bounded map from the closed subspace of $\mY$ consisting
  of gradients of solutions, to $\rev W^{-1,2}$.
  Theorem~\ref{thm:D} shows that
  $\|\nabla u\|_\mY\lesssim \|E_0^+ f_0\|_{\rev W^{-1,2}}+ \|E_0^0 f_0\|$.
    We now use the invertibility of
  $N^+: E_0^+ \mH^{-1}\to N^+\mH^{-1}$, provided by
  Proposition~\ref{prop:restrictproj}.
  As in (ii), we conclude from \eqref{eq:postintegration} an estimate
  $(1-C\epsilon)\|\nabla u\|_\mY\lesssim \|(f_0)_\ta\|_{\rev W^{-1,2}}
  + \|K(\nabla u)\|_\mY$,
  with some compact operator $K$, from which it follows that 
  the range of $\nabla u\mapsto (f_0)_\ta+ E_0^0 f_0$ is closed.
  
  To prove injectivity, assume that $\div A \nabla u=0$
  with $\nabla u\in \mY$, $(f_0)_\ta=0$ and $E_0^0 f_0=0$.
  It suffices to show regularity $\nabla u\in\mX$.
  Indeed, as in (ii) it then follows from \eqref{eq:Green} that $\nabla u=0$.
  To show that $\nabla u\in\mX$, we use \eqref{eq:tracesplitrepr} 
  to write
  $$
    f_0 = h^++ h^-_\mY+ h^-_\mX,
  $$   
  where 
  \begin{align*}
    h^-_\mY &= S(\mE_t \eta_t f_t)|_{t=0},\\
    h^-_\mX &= \widetilde S(\eta'_t f_t)|_{t=0}.
  \end{align*}
  We have $h^-_\mY\in E_0^-\mH^{-1}$, and from part (iii)
  of the proof of Theorem~\ref{thm:N} it follows that 
  $h^-_\mX\in E_0^-\mH$ and that $\widetilde S(\eta'_t f_t)\in \mX$.
  Now note that $f_0\in N^-\mH^{-1}$, $h^+\in E_0^+\mH^{-1}$ 
  and that 
  $E_0^+: N^-\mH^{-1}\to E_0^+\mH^{-1}$
  and $E_0^-: N^-\mH^{-1}\to E_0^-\mH^{-1}$
  are isomorphisms by Proposition~\ref{prop:restrictproj}.
  Define the Hankel type operator 
  $$
    H: E_0^-\mH^{-1}\to E_0^+\mH^{-1}: h^-\to h^+
  $$
  if $f\in N^-\mH^{-1}$ with $h^\pm= E_0^\pm f$.
  We apply $H$ to $h^-_\mY+ h^-_\mX$, with $f=f_0$, to
  get
  $h^+= h^+_\mY+ h^+_\mX$ with
  $h^+_\mY= H(h^-_\mY)$ and $h^+_\mX= H(h^-_\mX)$.
  Rewrite \eqref{eq:postintegration} as
  $$
    (I -(I+ e^{-t\Lambda}H\gamma)S\mE_t)(\eta_t f_t)=
    e^{-t\Lambda} h^+_\mX+ \widetilde S(\eta'_t f_t), \qquad 0<t<\delta,
  $$
  where $\gamma$ denotes trace at $t=0$.
  Since the right hand side belongs to $\mX$ and the
  operator $(I+ e^{-t\Lambda}H\gamma)S\mE_t$ is bounded
  with small norm on both $\mY$ and $\mX$, using a Neumann series
  for the inverse it follows that $\nabla u\in \mX$ as required.
\end{proof}

\bibliographystyle{acm}

\end{document}